\newtheorem{lemma}{Lemma}[section]
\newtheorem{definition}{Definition}[section]
\newtheorem{theorem}{Theorem}[section]
\newtheorem{corollary}{Corollary}[section]
\newtheorem{remark}{Remark}[section]
\newtheorem{example}{Example}[section]
\newtheorem{assumption}{Assumption}[section]
\numberwithin{equation}{section}
\def\R{{\mathbb{R}}}
\def\D{{\frak D}}
\def\B{{\frak B}}
\def\dist{{\rm dist}}
\def\dom{{\rm dom}\,}
\def\Argmin{\mathop{\rm Arg\,min}}
\definecolor{blue}{rgb}{0,0,0} % uncomment for ArXiv versions
\newcommand{\revise}[1]{\textcolor{blue}{\ignorespaces#1\ignorespaces}}   % REVISE
\newcommand{\reviser}[1]{\textcolor{blue}{\ignorespaces#1\ignorespaces}}
\title{\sf Kurdyka-{\L}ojasiewicz exponent via inf-projection}
\author{
Peiran Yu \thanks{Department of Applied
Mathematics, The Hong Kong Polytechnic University, Hong Kong, China. E-mail: {peiran.yu@connect.polyu.hk}.}
\and Guoyin Li \thanks{Department of Applied Mathematics, University of New South Wales, Sydney, Australia. This author is partially supported by a Future fellowship from Australian Research Council (FT130100038) and a discovery project from Australian Research Council (DP190100555). E-mail: {g.li@unsw.edu.au}.}
\and Ting Kei Pong \thanks{Department of Applied Mathematics, The Hong Kong Polytechnic University, Hong Kong, China.
This author was supported partly by Hong Kong Research Grants Council PolyU153005/17p. E-mail: {tk.pong@polyu.edu.hk}.}
}
\date{Revised version: January 13, 2021}
\begin{document}
  \maketitle

\begin{abstract}
Kurdyka-{\L}ojasiewicz (KL) exponent plays an important role in estimating the convergence rate of many contemporary first-order methods. In particular, a KL exponent of $\frac12$ \color{blue}for a suitable potential function \color{black} is related to local linear convergence. Nevertheless, KL exponent is in general extremely hard to estimate. In this paper, we show under mild assumptions that KL exponent is preserved via inf-projection.
Inf-projection is a fundamental operation that is ubiquitous when reformulating optimization problems via the lift-and-project approach. By studying its operation on KL exponent, we show that the KL exponent is $\frac12$ for several important convex optimization models, including some semidefinite-programming-representable functions and \revise{some} functions that involve $C^2$-cone reducible structures, under conditions such as strict complementarity. Our results are applicable to concrete optimization models such as group fused Lasso and overlapping group Lasso. In addition, for nonconvex models, we show that the KL exponent of many difference-of-convex functions can be derived from that of their natural majorant functions, and the KL exponent of the Bregman envelope of a function is the same as that of the function itself. Finally, we estimate the KL exponent of the sum of the least squares function and the indicator function of the set of matrices of rank at most $k$.
\end{abstract}

\section{Introduction}

Many problems in machine learning, signal processing and data analysis involve large-scale nonsmooth nonconvex optimization problems. These problems are typically solved using first-order methods, which are noted for their scalability and ease of implementation. Commonly used first-order methods include the proximal gradient method and its variants, and splitting methods such as Douglas-Rachford splitting method and its variants; see the recent expositions \cite{BoPaChPeEc10,PaBo2012} and references therein for more detail. In the general nonconvex \color{blue}nonsmooth \color{black} setting, convergence properties of the sequences generated by these algorithms are typically analyzed by assuming a certain potential function to have the so-called Kurdyka-{\L}ojasiewicz (KL) property.

%Loosely speaking, the KL property holds when it is possible to bound the function value deviation in terms of a generalized notion of ``gradient"; see Definition~\ref{DefKL} below for the precise definition. \revise{The idea of generalizing the KL property to nonsmooth case and using it to deduce convergence rate of subgradient dynamic system was  proposed earlier  in \cite{BoDaLe07}. }
\color{blue}The KL property originates from the seminal {\L}ojasiewicz inequality that bounds  the function value deviation  of a  real-analytic function in terms of  its gradient; see \cite{Loja63}. This inequality was extended to the case of $C^1$ subanalytic functions by Kurdyka in  \cite{Kur98} using the notion of desingularizing function.  An important breakthrough was made in \cite{BoDaLe07,BoDaLeSh07}, where the {\L}ojasiewicz  inequality was further generalized to nonsmooth cases by using tools of modern variational  analysis and semialgebraic geometry. This generalization significantly broadened the applicability of the aforementioned KL inequality to nonconvex settings, and it allowed us to perform convergence rate
analysis for various important algorithms in nonsmooth optimization and subgradient dynamical systems. \color{black}

The KL property\footnote{\revise{See Definition~\ref{DefKL} for the precise definition.}} is satisfied by a large class of functions such as proper closed semi-algebraic functions; see, for example, \cite{AtBoReSo10}.
It has been the main workhorse for establishing convergence of sequences generated by various first-order methods, especially in nonconvex settings \cite{AtBo09,AtBoReSo10,AtBoSv13,BoSaTe14}. Moreover, when it comes to estimating {\em local convergence rate}, the so-called KL exponent plays a key role; see, for example, \cite[Theorem~2]{AtBo09}, \cite[Theorem~3.4]{FranGarPey15} and \cite[Theorem~3]{LiPong16}. Roughly speaking, an exponent of \revise{$\alpha\in (0,\frac12]$} of a suitable potential function corresponds to a linear convergence rate, while an exponent of $\alpha \in (\frac12,1)$ corresponds to a sublinear convergence rate. However, as noted in
\cite[Page~63, Section~2.1]{LuoPangRalph96}, explicit estimation of KL exponent for a given function is difficult in general. Nevertheless, due to its significance in convergence rate analysis, KL exponent computation has become an important research topic in recent years and some positive results have been obtained. For instance, we now know the KL exponent of the maximum of finitely many polynomials \cite[Theorem~3.3]{LiMorPham15} and the KL exponent of a class of quadratic optimization problems with matrix variables satisfying orthogonality constraints \cite{LiuWuSo16}. In addition, it has been shown that the KL exponent is closely related to several existing and widely-studied error bound concepts such as the H\"{o}lder growth condition and \color{blue}the first-order error bound mentioned in \cite{BoNgPeSu17,LuTs93,TsYu09};\footnote{\color{blue}This type of first-order error bound is sometimes called the Luo-Tseng error bound; see \cite{LiPong17,YuZhSo19}.\color{black}} \color{black} see for example, \cite[Theorem~5]{BoNgPeSu17}, \cite[Theorem~3.7]{DrIoLewis16}, \cite[Proposition~3.8]{DrIoLewis16}, \cite[Corollary~3.6]{DrLewis18} and \cite[Theorem~4.1]{LiPong17}. Taking advantage of these connections, we now also know that convex models that satisfy the second-order growth condition have KL exponent $\frac12$, so do models that satisfy the \color{blue} first-order error bound condition \color{black} together with a mild assumption on the separation of stationary values; see the recent work \cite{Cui_Ding_Zhao,LiPong17,ZhSo2017} for concrete examples. This sets the stage for developing calculus rules for KL exponent in \cite{LiPong17} to deduce the KL exponent of a function from functions with known KL exponents. For example, it was shown in \cite[Corollary~3.1]{LiPong17} that under mild conditions, if $f_i$ is a KL function with exponent $\alpha_i\in [0,1)$, $1\le i\le m$, then the KL exponent of $\min_{1\le i\le m}f_i$ is given by $\max_{1\le i\le m}\alpha_i$. This was then used in \cite[Section~5.2]{LiPong17} for showing that the least squares loss with smoothly clipped absolute deviation (SCAD) \cite{Fan97} or minimax concave penalty (MCP) regularization \cite{Zhang10} has KL exponent $\frac12$.

In this paper, we will further explore this line of research and study how KL exponent behaves under the {\em inf-projection} operation: this is a significant generalization of the operation of taking the minimum of finitely many functions. Precisely, let $\mathbb{X}$ and $\mathbb{Y}$ be two finite dimensional Hilbert spaces and let \color{blue} $F:\mathbb{X}\times \mathbb{Y}\to  \R \cup \{\infty\}$ \color{black} be a proper closed function,\footnote{We refer the readers to Section~\ref{sec2} for relevant definitions.} we call the function $f(x) := \inf_{y\in \mathbb{Y}}F(x,y)$ for $x\in \mathbb{X}$ an inf-projection of $F$. The name comes from the fact that the strict epigraph of $f$, defined as $\{(x,r)\in \mathbb{X}\times \R:\; f(x) < r\}$, is equal to the projection of the strict epigraph of $F$ onto $\mathbb{X}\times \R$. Functions represented in terms of inf-projections arise naturally in sensitivity analysis as {\em value functions}; see, for example, \cite[Chapter~3.2]{BoLe06}. Inf-projection also appears when representing functions as optimal values of linear programming problems, or more generally, semidefinite programming (SDP) problems; see \cite{HeNi10} for \color{blue} semidefinite-programming-representable (SDP-representable) \color{black} functions. It is known that inf-projection preserves nice properties of $F$ such as convexity \cite[Proposition~2.22(a)]{RocWets98}. In this paper, we show that, under mild assumptions, the KL exponent is also preserved under inf-projection. Based on this result and the ubiquity of inf-projection, we are then able to \revise{study} KL exponents of various important convex and nonconvex models that were out of reach in \revise{previous studies. These include convex models such as a large class of SDP-representable functions, and some functions with $C^2$-cone reducible structures, as well as nonconvex models such as difference-of-convex functions and Bregman envelopes. These models are discussed in details in Section~\ref{sec3.1} with the general strategy for deducing their KL exponents outlined.}

The rest of the paper is organized as follows. We present necessary notation and preliminary materials in Section~\ref{sec2}. The KL exponent under inf-projection is studied in Section~\ref{sec3}, \revise{and we outline how the results can be applied to deducing KL exponents of some optimization models in Section~\ref{sec3.1}. Section~\ref{sec4} is devoted to deriving KL exponents for various structured convex models, and in Section~\ref{sec5}, we study KL exponents for several nonconvex models.} Finally, some concluding remarks are given in Section~\ref{sec6}.

\section{Notation and preliminaries}\label{sec2}
In this paper, we use $\mathbb{X}$ and $\mathbb{Y}$ to denote two finite dimensional Hilbert spaces. We use $\langle \cdot,\cdot\rangle$ to denote the inner product of the underlying Hilbert space and use $\|\cdot\|$ to denote the associated norm. Moreover, for a linear map ${\cal A}:\mathbb{X}\to\mathbb{Y}$, we use ${\cal A}^*$ to denote its adjoint. Next, we let $\R$ denote the set of real numbers \revise{and let $\R^n$ denote the set of $n$-tuples of real numbers}. We also let $\R^{m\times n}$ denote the set of all $m\times n$ matrices. The (trace) inner product of two matrices $A$ and $B\in \R^{m\times n}$ is defined as $\langle A,B\rangle := {\rm tr}(A^TB)$, where ${\rm tr}$ denotes the trace of a square matrix. The Fr\"{o}benius norm of a matrix $A\in \R^{m\times n}$ is denoted by $\|A\|_F$, which is defined as $\|A\|_F:= \sqrt{{\rm tr}(A^TA)}$. Finally, the space of $n\times n$ symmetric matrices is denoted by ${\cal S}^n$, the cone of $n\times n$ positive semidefinite matrices is denoted by ${\cal S}^n_+$, and we write $X\succeq 0$ (resp., $X\succ 0$) to mean $X\in {\cal S}^n_+$ \color{blue}(resp., $X\in {\rm int\,}{\cal S}^n_+$, where ${\rm int\,}{\cal S}^n_+$ is the interior of ${\cal S}^n_+$). \color{black}

\color{blue}For a set ${\frak D}\subseteq \mathbb{X}$, we denote the distance from an $x\in \mathbb{X}$ to $\mathfrak{D}$ as $\dist(x,\mathfrak{D}):=\inf_{y\in \mathfrak{D}}\|x-y\|$.
The closure (resp., interior) of ${\frak D}$ is denoted by \revise{${\rm cl\,}{\frak D}$} (resp., \revise{${\rm int\,} \mathfrak{D}$}), and we use $B(x,r)$ to denote the closed ball centered at $x\in \mathbb{X}$ with radius $r > 0$, i.e., $B(x,r) := \{u\in \mathbb{X}:\; \|u-x\|\le r\}$. For a convex set ${\frak C}\subseteq \mathbb{X}$, we denote its relative interior by \revise{${\rm ri\,}{\frak C}$}, and use ${\frak C}^\circ$ to denote its polar, which is defined as
\[
{\frak C}^\circ := \{z\in \mathbb{X}:\; \langle x,z\rangle\le 1\ \mbox{for all}\  x\in {\frak C}\}.
\]
Finally, the indicator function of a nonempty set ${\frak D}\subseteq \mathbb{X}$ is denoted by $\delta_{\frak D}$, which equals zero in ${\frak D}$ and is infinity otherwise. We use $\sigma_{\frak D}$ to denote its support function, which is defined as $\sigma_{\frak D}(x) := \sup_{z\in {\frak D}}\langle x,z\rangle$ for $x\in \mathbb{X}$.

For a mapping $\Theta:\mathbb{X}\to \mathbb{Y}$ that is continuously differentiable on $\mathbb{X}$, we use $D\Theta(x)$ to denote the derivative mapping of $\Theta$ at $x\in \mathbb{X}$: this is the linear map defined by
\[
[D\Theta(x)]h := \lim_{t\to 0}\frac{\Theta(x+th) - \Theta(x)}{t}\ \ \ \mbox{for all $h\in \mathbb{X}$.}
\]
We denote the adjoint of the derivative mapping by $\nabla \Theta(x)$. This latter mapping is referred to as the gradient mapping of $\Theta$ at $x$.
Then, following \cite[Definition~3.1]{Shapiro}, we say that a closed set ${\frak D} \subseteq \mathbb{X}$ is $C^2$-cone reducible at $\bar w \in {\frak D}$ if there exist a closed convex pointed cone $K \subseteq \mathbb{Y}$, $\rho>0$ and a mapping $\Theta:\mathbb{X} \rightarrow \mathbb{Y}$ that maps $\bar w$ to 0 and is twice continuously differentiable in $B(\bar w, \rho)$ with $D \Theta(\bar w)$ being onto, such that
\[
{\frak D} \cap B(\bar w, \rho) = \{w: \Theta(w) \in K\} \cap B(\bar w, \rho).
\]
We say that the set ${\frak D}$ is $C^2$-cone reducible if, for all $\bar w \in {\frak D}$, ${\frak D}$ is $C^2$-cone reducible at $\bar w$. It is known that convex polyhedral sets, the positive semidefinite cone and the second-order cone are all $C^2$-cone reducible; see, for example, the discussion following \cite[Definition~3.1]{Shapiro}. Finally, following the discussion right after \cite[Definition~6]{Cui_Ding_Zhao}, we say that an extended-real-valued function is $C^2$-cone reducible if
its epigraph is a $C^2$-cone reducible set, where the epigraph of an extended-real-valued function $f: \mathbb{X}\to [-\infty,\infty]$ is defined as ${\rm epi\,}f:= \{(x,t)\in \mathbb{X}\times \R:\; f(x)\le t\}$.

An extended-real-valued function $f:\mathbb{X}\to [-\infty,\infty]$ \color{black} is said to be proper if its domain \color{blue}${\rm dom\,} f:=\{x\in \mathbb{X}:\; f(x)<\infty\}\neq \emptyset$ and it is never $-\infty$. \color{black} A proper function is closed if it is lower semicontinuous. For a proper function $f$, its regular subdifferential at \color{blue}$x\in {\rm dom\,} f$ \color{black} is defined in \cite[Definition~8.3]{RocWets98} by
\[
\hat{\partial} f(x):=\left\{\zeta\in \mathbb{X} :\; \liminf\limits_{z\rightarrow x,z\neq x}\frac{f(z)-f(x)-\langle \zeta,z-x\rangle }{\|z-x\|}\ge 0 \right\}.
\]
The subdifferential of $f$ at \color{blue}$x\in {\rm dom\,} f$ \color{black} (which is also called the limiting subdifferential) is defined in \cite[Definition~8.3]{RocWets98} by
\begin{equation*}
\partial f(x):=\left\{\zeta\in \mathbb{X} :\; \exists x^{k}\stackrel{f}{\rightarrow} x,\ \zeta^{k}\rightarrow \zeta \ {\rm with\ } \zeta^k \in \hat{\partial} f(x^k)\  {\rm for\  each}\  k \right\};
\end{equation*}
here, $x^{k}\stackrel{f}{\rightarrow} x$ means both $x^{k}\to x$ and $f(x^{k})\to f(x)$. Moreover, we set $\partial f(x) = \hat \partial f(x) = \emptyset$ for \color{blue}$x\notin {\rm dom\,} f$ \color{black} by convention, and write \color{blue}${\rm dom\,}\partial f:= \{x\in \mathbb{X}:\; \partial f(x)\neq \emptyset\}$. \color{black} It is known in \cite[Exercise~8.8]{RocWets98} that $\partial f(x) = \{\nabla f(x)\}$ if $f$ is continuously differentiable at $x$. Moreover, when $f$ is proper convex,
the limiting subdifferential reduces to the classical subdifferential in convex analysis; see \cite[Proposition~8.12]{RocWets98}. Finally, for a nonempty closed set ${\frak D}$, we define its normal cone at an $x\in {\frak D}$ by $N_{\frak D}(x) := \partial \delta_{\frak D}(x)$. If ${\frak D}$ is in addition convex, we define its tangent cone at $x\in {\frak D}$ by $T_{\frak D}(x) := [N_{\frak D}(x)]^\circ$.

For a proper convex function $f$, its Fenchel conjugate is
\[
f^*(u):=\sup_{x}\left\{\langle u,x\rangle -f(x)\right\};
\]
moreover, it is known that the following equivalence holds (see \cite[Theorem~23.5]{Roc70}):
\begin{equation}\label{Young}
u\in \partial f(x) \ \ \Longleftrightarrow \ \ f(x) + f^*(u) = \langle x,u\rangle \ \ \Longleftrightarrow \ \ f(x) + f^*(u) \le \langle x,u\rangle.
\end{equation}
For a proper closed convex function $f$, its asymptotic (or recession) function $f^{\infty}$ is defined by
$f^\infty(d):=\liminf_{t \rightarrow \infty, d' \rightarrow d} \frac{f(td')}{t}$; see \cite[Theorem~2.5.1]{AuTe2003}.
Finally, for a proper function $f$, we say that it is level-bounded if, for each $\alpha\in \R$, the set $\{x:\; f(x)\le \alpha\}$ is bounded.

\color{blue}For a proper function $F:\mathbb{X}\times \mathbb{Y}\to \R \cup \{\infty\}$, following \cite[definition~1.16]{RocWets98}, we say that $F$ is level-bounded in $y$ locally uniformly in $x$ if for each $\bar x\in\mathbb{X}$ and $\alpha\in\R$ there is a neighborhood $V $ of $\bar x$ such that the set $\{(x,y) \in \mathbb{X}\times \mathbb{Y}:\; x\in V {\rm \ and\ }F(x,y)\le\alpha\}$ is bounded. When a function  $F$ is level-bounded in $y$ locally uniformly in $x$, its inf-projection $f(x): = \inf_{y}F(x,y)$ has the following properties, which can be found in \cite{RocWets98}. We include the proof for the convenience of the readers.
\begin{lemma}\label{Yx}
   Let $F:\mathbb{X}\times\mathbb{Y}\to \mathbb{R}\cup\{\infty\}$ be a proper closed function and define $f(x):= \inf_{y\in \mathbb{Y}} F(x,y)$ and $Y(x):=\Argmin_{y\in \mathbb{Y}} F (x,y)$ for $x\in \mathbb{X}$. Suppose  $F$ is level-bounded in $y$ locally uniformly in $x$. Then the following statements hold:
  \begin{enumerate}[{\rm (i)}]
   \item The function $f$ is proper and closed, and the set $Y(x)$ is nonempty and compact for any $x\in\dom \partial f$.
    \item For any $x\in\dom \partial f$, it holds that
     \begin{align}\label{dfg}
    \partial f (x)\subseteq \bigcup_{y\in Y(x)}\{\xi\in \mathbb{X}\ :\ (\xi,0)\in \partial F (x,y)\}.
\end{align}
   \item For any $\bar x\in\dom \partial f$, it holds that
     \begin{equation}\label{limsup}
\limsup_{\revise{{\rm dom\,}\partial f} \ni x\overset{f}\rightarrow \bar x}Y(x)\subseteq Y(\bar x);
\end{equation}
     \item For any $\bar x\in\dom \partial f$ and any $\nu>0$, there exists $\epsilon>0$ such that
     \begin{equation*}%\label{distineq}
 {\rm dist}(y,Y(\bar x))\le\frac{\nu}2
\end{equation*}
whenever $y\in Y(x)$ with $x\in B(\bar x,\epsilon)\cap \revise{{\rm dom\,}\partial f}$ and $|f(x) - f(\bar x)| < \epsilon$.
   \end{enumerate}
\end{lemma}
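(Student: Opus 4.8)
The four statements are standard outputs of parametric-minimization theory, and the plan is to assemble them from two ingredients: the existence/lower-semicontinuity theorem for inf-projections, and a single outer-semicontinuity argument for the argmin map that recurs in (ii)--(iv). Throughout I would use that $\dom\partial f\subseteq\dom f$, so that the minimizer set is available wherever we need it.

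For (i), I would simply invoke the parametric minimization theorem \cite[Theorem~1.17]{RocWets98}: since $F$ is proper, closed and level-bounded in $y$ locally uniformly in $x$, the inf-projection $f$ is proper and closed, and $Y(x)$ is nonempty and compact for every $x\in\dom f$. Restricting to $\dom\partial f\subseteq\dom f$ gives the stated conclusion immediately.

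For (ii), I would argue in two stages. First, at the regular subdifferential level: if $\xi\in\hat\partial f(x)$ and $y\in Y(x)$, then $(\xi,0)\in\hat\partial F(x,y)$. This follows from the elementary estimate $F(x',y')-F(x,y)=F(x',y')-f(x)\ge f(x')-f(x)$, combined with the defining inequality of $\xi\in\hat\partial f(x)$ and the bound $\|x'-x\|\le\|(x',y')-(x,y)\|$ on the denominators; the directions with $x'=x$ are handled separately using $F(x,y')\ge f(x)$. Second, I would pass to the limiting subdifferential. Given $\xi\in\partial f(\bar x)$, choose $x^k\overset{f}{\to}\bar x$ and $\xi^k\to\xi$ with $\xi^k\in\hat\partial f(x^k)$; by (i) we may pick $y^k\in Y(x^k)$, and the first stage gives $(\xi^k,0)\in\hat\partial F(x^k,y^k)\subseteq\partial F(x^k,y^k)$. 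Since $x^k\to\bar x$ and $F(x^k,y^k)=f(x^k)\to f(\bar x)$, local-uniform level-boundedness confines $\{y^k\}$ to a bounded set, so along a subsequence $y^k\to\bar y$; lower semicontinuity of $F$ then forces $F(\bar x,\bar y)\le f(\bar x)$, i.e.\ $\bar y\in Y(\bar x)$. As $(x^k,y^k)\overset{F}{\to}(\bar x,\bar y)$ and $(\xi^k,0)\to(\xi,0)$, the definition of the limiting subdifferential yields $(\xi,0)\in\partial F(\bar x,\bar y)$, which is (ii).

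For (iii) and (iv) the common engine is the same outer-semicontinuity reasoning. For (iii), taking $\bar y$ in the outer limit produces $x^k\overset{f}{\to}\bar x$ in $\dom\partial f$ and $y^k\to\bar y$ with $y^k\in Y(x^k)$; then $F(x^k,y^k)=f(x^k)\to f(\bar x)$, and lower semicontinuity of $F$ gives $F(\bar x,\bar y)\le f(\bar x)$, so $\bar y\in Y(\bar x)$. For (iv), I would argue by contradiction: if it failed there would be $\nu>0$ and sequences $x^k\to\bar x$ in $\dom\partial f$ with $f(x^k)\to f(\bar x)$ and $y^k\in Y(x^k)$ satisfying $\dist(y^k,Y(\bar x))>\nu/2$; local-uniform level-boundedness again bounds $\{y^k\}$, a convergent subsequence has limit in $Y(\bar x)$ by the argument of (iii), and this contradicts the distance lower bound. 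The routine parts are (i), (iii) and the regular-subgradient step of (ii); I expect the main obstacle to be the limiting-subdifferential passage in (ii), where one must attach a single $\bar y\in Y(\bar x)$ to the limiting subgradient $\xi$ by extracting a convergent subsequence of minimizers $y^k$ whose limit remains a minimizer at $\bar x$ — precisely the point where both the local-uniform level-boundedness (for boundedness of $\{y^k\}$) and the lower semicontinuity of $F$ (for optimality of the limit) are indispensable.
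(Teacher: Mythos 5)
Your proposal is correct, and for parts (i) and (iii) it coincides with the paper's proof: the paper also invokes \cite[Theorem~1.17]{RocWets98} for (i) and gives the same lower-semicontinuity argument for (iii). The difference is that where the paper disposes of (ii) by citing \cite[Theorem~10.13]{RocWets98} and of (iv) by following the proof of \cite[Proposition~5.12(a)]{RocWets98} (after first confining $Y(x)$ to a bounded set $D$ via local uniform level-boundedness), you unpack these citations into self-contained arguments: a two-stage proof of (ii) passing from $\hat\partial f$ to $\partial f$, and a sequential compactness contradiction for (iv). Both of your arguments are sound --- in the limiting step of (ii) you correctly identify the two essential ingredients (level-boundedness to bound $\{y^k\}$ and closedness of $F$ to keep the limit in $Y(\bar x)$), and the inequality $F(x',y')\ge f(x')$ together with the denominator comparison $\|x'-x\|\le\|(x',y')-(x,y)\|$ does handle the regular-subgradient stage, provided one notes (as you implicitly do) that the bound is only needed when the numerator is negative. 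The trade-off is simply more work for less reliance on quoted machinery; mathematically the two routes are the same.
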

\begin{proof}
Since $F $ is proper, closed and level-bounded in $y$ locally uniformly in $x$, we have from \cite[Theorem~1.17]{RocWets98} that $f$ is proper and closed, and $Y(x)$ is a nonempty compact set whenever $x\in \revise{{\rm dom\,}\partial f}$.
Applying \cite[Theorem~10.13]{RocWets98}, we conclude that \eqref{dfg} holds for any $x\in{\rm dom\,}\partial f $.

We now prove (iii) and (iv) respectively.
For (iii),  fix any $\bar x\in\dom\partial f$ and any $y^*$ satisfying $y^*\in \limsup_{{\rm dom\,}\partial f \ni x\overset{f}\rightarrow \bar x}Y(x)$ and recall from \cite[Section~5B]{RocWets98} that
\[
\limsup_{{\rm dom\,}\partial f \ni x\overset{f}\rightarrow \bar x}Y(x):=\left\{y:\;
\exists\, x^k\overset{f}\to \bar x,\ y^k\to y\ {\rm with}\ y^k \in Y(x^k)\ {\rm and}\ x^k\in \revise{{\rm dom\,}\partial f}\ \mbox{for each }k\right\}.
\]
So, there exist $x^k\overset{f}\to \bar x$ with $x^k\in \revise{{\rm dom\,}\partial f} $ and $y^k\to y^* $ such that $y^k\in Y(x^k)$ for all $k$. Then we have
 \[
 F (\bar x,y^*)\overset{\rm (a)}\le \liminf_k F (x^{k},y^{k}) \overset{\rm (b)}= \liminf_k f (x^k) \overset{\rm (c)}= f (\bar x),
 \]
 where (a) is due to the closedness of $F$, (b) holds because $y^k\in Y(x^k)$, and (c) holds because $x^k\overset{f}\to \bar x$.
 The above relation implies that $y^* \in Y({\bar x})$. This proves \eqref{limsup}.

Finally, for (iv), fix any $\bar x\in\dom\partial f$ and any $\nu > 0$. Since $F$ is level-bounded in $y$ locally uniformly in $x$, there exist $\tilde \epsilon > 0$ and a bounded set $D$ so that whenever $x\in B(\bar x,\tilde\epsilon)\cap \revise{{\rm dom\,} \partial f}$, we have $\{y : F (x,y)\le f (\bar x) + 1\}\subseteq D$. Thus, for any $x$ satisfying $x\in B(\bar x,\tilde\epsilon)\cap \revise{{\rm dom\,}\partial f}$ and $f(x) < f(\bar x) + 1$, we obtain
\begin{equation}\label{YinD}
Y(x) = \{y : F (x,y)\le f (x)\}\subseteq \{y : F (x,y)\le f (\bar x) + 1\}\subseteq D.
\end{equation}
Since \eqref{limsup} holds, by picking $\eta > 0$ so that $D\subseteq B(0,\eta)$ and following the proof of \cite[\reviser{Proposition~5.12(a)}]{RocWets98}, we see that for this $\eta$, there exists $\epsilon\in (0,\min\{\tilde\epsilon,1\})$ such that
\begin{equation*}
Y(x) = Y(x)\cap D \subseteq Y(x)\cap B(0,\eta)\subseteq Y(\bar x) + B(0,\nu/2),
\end{equation*}
whenever $x\in B(\bar x,\epsilon)\cap \revise{{\rm dom\,}\partial f}$ and $|f(x) - f(\bar x)| < \epsilon$,
where the first equality follows from \eqref{YinD} and the facts that $\epsilon < \tilde \epsilon$ and $\epsilon  < 1$.
This further implies that
\begin{equation*}
 {\rm dist}(y,Y(\bar x))\le\frac{\nu}2.
\end{equation*}
for any $y\in Y(x)$ with $x\in B(\bar x,\epsilon)\cap \revise{{\rm dom\,}\partial f}$ and $|f(x) - f(\bar x)| < \epsilon$.
\end{proof}

\color{black}

We next recall the Kurdyka-{\L}ojasiewicz (KL) property and the notion of KL exponent; see \cite{Loja63,Kur98,AtBo09,AtBoReSo10,AtBoSv13,LiPong17}. This property has been used extensively in analyzing convergence of first-order methods; see, for example, \cite{AtBo09,AtBoReSo10,AtBoSv13,BoSaTe14,WCP18}.

\begin{definition}[{{\bf Kurdyka-{\L}ojasiewicz property and exponent}}]\label{DefKL}
  We say that a proper closed function \color{blue}$h:\mathbb{X}\to\mathbb{ R}\cup\{\infty\}$ \color{black} satisfies the Kurdyka-{\L}ojasiewicz (KL) property at \color{blue}$\hat x\in {\rm dom\,} \partial h$ \color{black} if there are $a\in (0,\infty]$, a neighborhood $V$ of $\hat{x}$ and a continuous concave function $\varphi: [0,a)\rightarrow [0,\infty) $ with $\varphi(0)=0$ such that
  \begin{enumerate}[{\rm (i)}]
    \item $\varphi$ is continuously differentiable on $(0, a)$ with $\varphi'>0$ on $(0,a)$;
    \item For any $x\in V$ with $h(\hat{x})<h(x)<h(\hat{x})+a$, it holds that
     \begin{equation}\label{phichoice}
     \varphi'(h(x)-h(\hat{x})){\rm dist}(0,\partial h(x))\ge 1.
     \end{equation}
  \end{enumerate}
  If $h$ satisfies the KL property at \color{blue}$\hat x\in {\rm dom\,}\partial h$ \color{black} and the $\varphi(s)$ in \eqref{phichoice} can be chosen as $\bar c \, s^{1-\alpha}$ for some $\bar c>0$ and $\alpha\in[0,1)$, then we say that $h$ satisfies the KL property at $\hat x$ with exponent $\alpha$.

  A proper closed function $h$ satisfying the KL property at every point in \color{blue}${\rm dom\,} \partial h$ \color{black} is said to be a KL function, and a proper closed function $h$ satisfying the KL property with exponent $\alpha\in [0,1)$ at every point in \color{blue}${\rm dom\,} \partial h$ \color{black} is said to be a KL function with exponent $\alpha$.
\end{definition}

KL functions is a broad class of functions which arise naturally in many applications. For instance, it is known that proper closed semi-algebraic functions are KL functions with exponent $\alpha\in [0,1)$; see, for example, \cite{AtBoReSo10}. KL property is a key ingredient in many contemporary convergence analysis for first-order methods, and the KL exponent plays an important role in identifying {\em local convergence rate}; see, for example, \cite[Theorem~2]{AtBo09}, \cite[Theorem~3.4]{FranGarPey15} and \cite[Theorem~3]{LiPong16}. In this paper, we will study how the KL exponent behaves under inf-projection, and use the rules developed to compute the KL exponents of various functions and to derive new calculus rules for KL exponent.

Before ending this section, we present two auxiliary lemmas. The first lemma concerns the uniformized KL property. It is a specialization of \cite[Lemma~6]{BoSaTe14} and explicitly involves the KL exponent.

\begin{lemma}[{{\bf Uniformized KL property with exponent}}]\label{UKL}
Suppose that \color{blue}$h:\mathbb{X}\to\mathbb{R}\cup\{\infty\}$ \color{black} is a proper closed function and let $\Omega$ be a \revise{nonempty} compact set \revise{with $\Omega\subseteq {\rm dom}\,\partial h$}. If $h$ \color{blue} takes a constant value \color{black} on $\Omega$ and satisfies the KL property at each point of $\Omega$ with exponent $\alpha$, then there exist $\epsilon, a ,c>0$  such that
\[
{\rm dist}\left(0,\partial h(x) \right)\ge c\left(h(x)- h(\bar{x})\right)^{\alpha}
\]
for any $\bar x\in \Omega$ and any $x$ satisfying $ h(\bar{x})<h(x)< h(\bar{x})+a$ and ${\rm dist}(x,\Omega)<\epsilon$.
\end{lemma}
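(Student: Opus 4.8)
The plan is to derive the uniform inequality by first rewriting the pointwise KL property with exponent $\alpha$ in power form, and then patching these local estimates together by a compactness argument over $\Omega$. Since $h$ takes a constant value on $\Omega$, I would set $\zeta := h(\bar x)$, which is independent of the choice of $\bar x \in \Omega$. Fix any $\bar x \in \Omega\subseteq {\rm dom}\,\partial h$. By assumption $h$ satisfies the KL property at $\bar x$ with exponent $\alpha$, so there exist $a_{\bar x} \in (0,\infty]$, a neighborhood $V_{\bar x}$ of $\bar x$, and $\bar c_{\bar x} > 0$ such that $\varphi(s) = \bar c_{\bar x}\, s^{1-\alpha}$ serves as the desingularizing function in \eqref{phichoice}. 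Since $\varphi'(s) = \bar c_{\bar x}(1-\alpha)s^{-\alpha}$, substituting into \eqref{phichoice} and using $h(\bar x) = \zeta$ shows that
\[
{\rm dist}(0,\partial h(x)) \ge \frac{1}{\bar c_{\bar x}(1-\alpha)}\bigl(h(x) - \zeta\bigr)^{\alpha}
\]
holds for every $x \in V_{\bar x}$ with $\zeta < h(x) < \zeta + a_{\bar x}$; I write $c_{\bar x} := [\bar c_{\bar x}(1-\alpha)]^{-1} > 0$ for the resulting local constant.

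Next I would uniformize these local estimates. For each $\bar x \in \Omega$, I shrink $V_{\bar x}$ to an open ball by choosing $\epsilon_{\bar x} > 0$ with $B(\bar x, 2\epsilon_{\bar x}) \subseteq V_{\bar x}$. The family $\{B(\bar x, \epsilon_{\bar x}) : \bar x \in \Omega\}$ is an open cover of the compact set $\Omega$, so it admits a finite subcover indexed by $\bar x_1, \dots, \bar x_p$. I then set $\epsilon := \min_{1\le i\le p}\epsilon_{\bar x_i}$, $a := \min_{1\le i\le p} a_{\bar x_i}$, and $c := \min_{1\le i\le p} c_{\bar x_i}$, all of which are positive.

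It remains to verify that these constants work. Given any $x$ with ${\rm dist}(x,\Omega) < \epsilon$, there is some $\bar x \in \Omega$ with $\|x - \bar x\| < \epsilon$, and in turn $\bar x \in B(\bar x_i, \epsilon_{\bar x_i})$ for some $i$. The triangle inequality then gives $\|x - \bar x_i\| < \epsilon + \epsilon_{\bar x_i} \le 2\epsilon_{\bar x_i}$, so $x \in B(\bar x_i, 2\epsilon_{\bar x_i}) \subseteq V_{\bar x_i}$; this matching step, achieved precisely through the factor-$2$ enlargement of the covering balls, is the only delicate point, and it is what forces the choice $\epsilon \le \min_i \epsilon_{\bar x_i}$. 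Since moreover $a \le a_{\bar x_i}$, any such $x$ with $\zeta < h(x) < \zeta + a$ also satisfies $\zeta < h(x) < \zeta + a_{\bar x_i}$, so the local power inequality at $\bar x_i$ applies and yields ${\rm dist}(0,\partial h(x)) \ge c_{\bar x_i}(h(x)-\zeta)^{\alpha} \ge c\,(h(x)-\zeta)^{\alpha}$, where the last step uses $c \le c_{\bar x_i}$ and $(h(x)-\zeta)^{\alpha} \ge 0$. Recalling that $\zeta = h(\bar x)$ for every $\bar x \in \Omega$, this is exactly the claimed inequality. The whole argument is essentially bookkeeping around the compactness of $\Omega$; the genuinely useful structural inputs are that $h$ is constant on $\Omega$, so that a single threshold $\zeta$ serves all base points, and that finitely many local KL neighborhoods suffice to cover $\Omega$.
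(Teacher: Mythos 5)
Your proof is correct, and it is essentially the argument the paper invokes: the paper's proof simply defers to \cite[Lemma~6]{BoSaTe14} with the desingularizing functions specialized to $c_i t^{1-\alpha}$, and that proof is exactly the finite-subcover patching of local power-form KL inequalities that you have written out explicitly (your choice $c=\min_i c_{\bar x_i}$ versus the summed constant arising in \cite{BoSaTe14} is an immaterial difference). The only cosmetic point is that some $a_{\bar x_i}$ may equal $\infty$, in which case one replaces the minimum by any finite positive number; this does not affect the validity of the argument.
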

\begin{proof}
  Replace the $\varphi_i(t)$ in the proof of \cite[Lemma~6]{BoSaTe14} by $c_i t^{1-\alpha}$ for some $c_i > 0$. The desired conclusion can then be proved analogously as in \cite[Lemma~6]{BoSaTe14}.
\end{proof}

The next lemma is a direct consequence of results in \cite{Sturm00}; see \cite[Theorem~3.3]{Sturm00} and the discussion following \cite[Eq.~(1.4)]{Sturm00} concerning the degree of singularity \reviser{for semidefinite feasibility system}.
\begin{lemma}[{{\bf Error bound for standard SDP problems under strict complementarity}}]\label{KLG}
  Let $C\in\mathcal{S}^d$, ${\cal A}:\mathcal{S}^d\to \R^m$ be a linear map, \color{blue}$b\in {\rm Range\,}({\cal A})$ \color{black} and define the function \color{blue}$G:\mathcal{S}^d\to\mathbb{R}\cup\{\infty\}$ \color{black} by
  \[
  G(X):= \langle C,X\rangle  +\delta_{\mathfrak{L}}(X),
  \]
  where $\mathfrak{L}={\cal A}^{-1}\{b\}\cap \mathcal{S}^d_+$. Suppose that ${\cal A}^{-1}\{b\}\cap\color{blue}{\rm int\,}\mathcal{S}^d_+\color{black}\neq\emptyset$ and there exists $\bar X\in\mathfrak{L}$ satisfying $0\in\color{blue}{\rm ri\,}\partial G(\bar X)\color{black}$. Then for any bounded neighborhood $\mathfrak{U}$ of $\bar X$, there exists $c>0$ such that for any $X\in\mathfrak{U}\cap\mathfrak{L}$,
\[
\dist(X,\Argmin G)\le c \left(G(X)- G(\bar X)\right)^{\frac12}.
\]
\end{lemma}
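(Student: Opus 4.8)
The plan is to recognize $\Argmin G$ as the solution set of a semidefinite \emph{feasibility} system and then invoke the H\"{o}lderian error bound of \cite{Sturm00}, with the strict complementarity hypothesis used precisely to pin the exponent at $\frac12$. Since $0\in{\rm ri\,}\partial G(\bar X)\subseteq\partial G(\bar X)$ and $G$ is convex, $\bar X$ is a global minimizer of $G$; write $v^*:=G(\bar X)=\langle C,\bar X\rangle$. Because $G(X)=\langle C,X\rangle$ on $\mathfrak{L}$ and $G\equiv\infty$ off $\mathfrak{L}$, the optimal set can be rewritten as
\[
\Argmin G=\{X\in\mathcal{S}^d_+:\ {\cal A}X=b,\ \langle C,X\rangle=v^*\}=\widetilde{\cal A}^{-1}\{\tilde b\}\cap\mathcal{S}^d_+,
\]
where $\widetilde{\cal A}X:=({\cal A}X,\langle C,X\rangle)$ and $\tilde b:=(b,v^*)$. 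This reformulation is what brings the problem into the scope of \cite{Sturm00}.

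Next I would apply the error bound of \cite[Theorem~3.3]{Sturm00} to the feasibility system $\widetilde{\cal A}^{-1}\{\tilde b\}\cap\mathcal{S}^d_+$, which, on any bounded neighborhood $\mathfrak{U}$ of $\bar X$, supplies a constant $c>0$ and a H\"{o}lder exponent of the form $2^{-\gamma}$ (with $\gamma$ the degree of singularity of the system) controlling the distance to $\Argmin G$ by the feasibility residual $\|\widetilde{\cal A}X-\tilde b\|+\dist(X,\mathcal{S}^d_+)$ raised to the power $2^{-\gamma}$. For $X\in\mathfrak{U}\cap\mathfrak{L}$ the residuals from ${\cal A}X=b$ and $X\in\mathcal{S}^d_+$ both vanish, so the only surviving term is $\langle C,X\rangle-v^*=G(X)-G(\bar X)\ge 0$. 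Hence the bound collapses to
\[
\dist(X,\Argmin G)\le c\,(G(X)-G(\bar X))^{2^{-\gamma}},\qquad X\in\mathfrak{U}\cap\mathfrak{L},
\]
and it remains only to show $\gamma=1$.

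To identify $\gamma$, I would first decode the hypothesis $0\in{\rm ri\,}\partial G(\bar X)$. Using $\partial G(\bar X)=C+N_{\mathfrak{L}}(\bar X)$ together with the Slater condition ${\cal A}^{-1}\{b\}\cap{\rm int\,}\mathcal{S}^d_+\neq\emptyset$, one has $N_{\mathfrak{L}}(\bar X)={\rm Range\,}({\cal A}^*)+N_{\mathcal{S}^d_+}(\bar X)$, so the hypothesis produces a multiplier $y$ with $S:=C-{\cal A}^*y\in-{\rm ri\,}N_{\mathcal{S}^d_+}(\bar X)$; concretely $S\succeq0$, $S\bar X=0$, and ${\rm rank}(\bar X)+{\rm rank}(S)=d$, i.e.\ $\bar X$ and $S$ are \emph{strictly complementary}. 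Every $X\in\Argmin G$ then satisfies $\langle X,S\rangle=\langle X,C\rangle-\langle {\cal A}X,y\rangle=v^*-\langle b,y\rangle=0$, whence $XS=0$ and ${\rm range}(X)\subseteq\ker S$. Since ${\rm rank}(S)=d-{\rm rank}(\bar X)$ forces ${\rm range}(\bar X)=\ker S$, the minimal face of $\mathcal{S}^d_+$ containing $\Argmin G$ is $\{X\succeq0:\ {\rm range}(X)\subseteq\ker S\}$, and $\bar X$ lies in its relative interior. Thus a single facial-reduction step yields a strictly feasible (Slater) system, so $\gamma=1$, and by the discussion following \cite[Eq.~(1.4)]{Sturm00} the exponent is $2^{-1}=\frac12$, as desired.

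The main obstacle is this last step: translating the variational condition $0\in{\rm ri\,}\partial G(\bar X)$ into Sturm's combinatorial degree of singularity and certifying it equals exactly $1$. This requires (a) justifying the normal-cone sum formula for $N_{\mathfrak{L}}(\bar X)$ under Slater; (b) matching the relative-interior statement with the rank/strict-complementarity description of ${\rm ri\,}N_{\mathcal{S}^d_+}(\bar X)$; and (c) verifying that the strictly complementary slack $S$ genuinely certifies that one facial reduction reaches a Slater point, so that Sturm's degree equals $1$ rather than merely being bounded by it. A secondary technical point is to confirm that restricting Sturm's two-sided (affine and conic) residual to $X\in\mathfrak{L}$ legitimately annihilates all but the objective residual, and that the bounded neighborhood $\mathfrak{U}$ enters the estimate in the same localized way, with $c$ allowed to depend on $\mathfrak{U}$, as in the statement.
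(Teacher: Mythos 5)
Your proposal is correct and follows essentially the same route as the paper's proof: both decode $0\in{\rm ri\,}\partial G(\bar X)$ under the Slater condition into a strictly complementary dual slack via the normal-cone sum rule, identify $\Argmin G$ with the feasibility system $\{{\cal A}W=b,\ \langle C,W\rangle=\inf G\}\cap{\cal S}^d_+$ of singularity degree one, invoke the Sturm-type error bound with exponent $\frac12$, and finish with Hoffman's bound to convert the distance to the affine set into the objective gap. The only cosmetic difference is that the paper cites \cite[Theorem~2.3]{DrLiWo17} (and \cite{Bruno18}) to package the singularity-degree-one step, while you unwind the facial-reduction argument by hand.
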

\begin{proof}
  Observe that
  \begin{align}\label{riri}
  \begin{aligned}
  0\in \color{blue}{\rm ri\,}\partial G(\bar X)\color{black}\overset{\rm (a)}=C + \color{blue}{\rm ri\,}N_{\mathfrak{L}}(\bar X)\color{black}
  &\overset{\rm (b)}=C + {\rm ri\,}\left(N_{{\cal A}^{-1}\{b\}}(\bar X)+N_{\mathcal{S}^d_+}(\bar X)\right)\\
  &\overset{\rm (c)}=C + \color{blue}{\rm ri\,}N_{{\cal A}^{-1}\{b\}}(\bar X)+ {\rm ri\,}N_{\mathcal{S}^d_+}(\bar X),\color{black}
  \end{aligned}
  \end{align}
  where (a) follows from \cite[Exercise~8.8]{RocWets98}, (b) follows from \cite[Theorem~23.8]{Roc70} and the assumption ${\cal A}^{-1}\{b\}\cap\color{blue}{\rm int\,}\mathcal{S}^d_+\color{black}\neq\emptyset$, and (c) follows from \cite[Corollary~6.6.2]{Roc70}.
  Since $N_{{\cal A}^{-1}\{b\}}(\bar X)=\color{blue}{\rm Range\,}({\cal A}^*)\color{black}$, we deduce further from \eqref{riri} the existence of $\bar y$ satisfying
 \begin{align}\label{strict}
   \color{blue}{\cal A}^*\bar y-C \in {\rm ri\,}N_{\mathcal{S}^d_+}(\bar X).\color{black}
  \end{align}
Next, since $0\in\partial G(\bar X)$, we have that $\bar X\in\Argmin{G}$ and thus
\[
\Argmin G=\left\{W:\;{\cal A}W=b\right\}\cap \left\{W:\;\langle C,W\rangle=\inf G\right\}\cap \mathcal{S}^d_+\neq \emptyset.
\]
\color{blue}This together with  \eqref{strict} implies that  the singularity degree of the semidefinite feasibility system $\left(\left\{W:\;{\cal A}W=b\right\}\cap \left\{W:\;\langle C,W\rangle=\inf G\right\},\mathcal{S}^d_+\right)$ is one. Combining this with \cite[Theorem~2.3]{DrLiWo17}, \color{black}%Sturm's error bound \cite[Theorem~3.3]{Sturm00} and \cite[Lemma~3.6]{Sturm00},}
  we conclude that for any bounded neighborhood $\mathfrak{U}$ of $\bar X$, there exists \color{blue}$c_1>0$ \color{black} such that for any $X\in\mathfrak{U}\cap\mathfrak{L}$,
\begin{align*}
\dist(X,\Argmin{G})&\le \color{blue}c_1\color{black}\ \sqrt{{\rm dist}\left(X,\left\{W:\;{\cal A}W=b\right\}\cap\left\{W:\langle C,W\rangle =\inf G\;\right\}\right)}\\
&\le \color{blue}c\color{black} \left(\langle C,X\rangle -\inf G\right)^{\frac12}= \color{blue}c\color{black} \left(G(X)- G(\bar X)\right)^{\frac12},
\end{align*}
where the second inequality holds for some \color{blue}$c > 0$ \color{black} thanks to the Hoffman error bound \color{blue}\cite[Lemma~3.2.3]{FaPa03}. \color{black} This completes the proof.
\end{proof}

\begin{remark}
 \color{blue}In the above lemma, the Slater's  condition ${\cal A}^{-1}\{b\}\cap{\rm int\,}\mathcal{S}^d_+\neq\emptyset$  together with the relative interior (ri) condition \reviser{$0\in {\rm ri\,}\partial G(\bar X)$} implies that \eqref{strict}  holds. \reviser{The condition  \eqref{strict}} is widely used in the  SDP literature and is often referred to as the strict complementarity condition; see  \cite{ShSc00,TuWo12,Pa00} for detailed discussions. \reviser{In particular, it is known that if strict complementarity condition \eqref{strict} holds , then the singular degree of the associated semidefinite feasibility system is one (see \cite[Proposition 7]{Bruno18} or the discussion following \cite[Eq.~(1.4)]{Sturm00})}.

 As we shall see in Section \ref{sec4}, \reviser{this strict complementarity condition}  is crucial for  deriving a KL exponent of $\frac12$ for some SDP representable functions. \color{black}
\end{remark}
\section{KL exponent via inf-projection}\label{sec3}

In this section, we study how the KL exponent behaves under inf-projection. Specifically, given a proper closed function \color{blue}$F:\mathbb{X}\times \mathbb{Y}\to \mathbb{R}\cup\{\infty\}$ \color{black} with known KL exponent, we would like to deduce the KL exponent of $\inf_{y\in \mathbb{Y}} F (\cdot,y)$ under suitable assumptions.
\begin{theorem}[{{\bf KL exponent via inf-projection}}]\label{general}
 Let \color{blue}$F:\mathbb{X}\times\mathbb{Y}\to \mathbb{R}\cup\{\infty\}$ \color{black} be a proper closed function and define $f(x):= \inf_{y\in \mathbb{Y}} F(x,y)$ and $Y(x):=\Argmin_{y\in \mathbb{Y}} F (x,y)$ for $x\in \mathbb{X}$. \color{blue}Suppose that the function  $F$ is level-bounded in $y$ locally uniformly in $x$. \color{black} Let $\alpha \in [0,1)$ and $\bar x\in \color{blue}{\rm dom\,}\partial f\color{black}$.\footnote{\color{blue} Here, $f$ is a proper closed function, thanks to Lemma \ref{Yx}(i).\color{black}} Suppose in addition the following conditions hold:
 \begin{enumerate}[{\rm (i)}]
   \item It holds that $\partial F (\bar x,\bar y) \neq \emptyset$ for all $\bar y\in Y(\bar x)$.
   \item The function $F$ satisfies the KL property with exponent $\alpha$ at every point in $\{\bar x\}\times Y(\bar x)$.
 \end{enumerate}
 Then $f$ satisfies the KL property at $\bar{x}$ with exponent $\alpha$.
\end{theorem}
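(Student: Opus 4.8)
The plan is to verify the KL inequality with exponent $\alpha$ for $f$ at $\bar x$ directly: I want to produce constants $c, a, \epsilon > 0$ such that $\dist(0,\partial f(x)) \ge c\,(f(x)-f(\bar x))^\alpha$ whenever $\|x-\bar x\|<\epsilon$ and $f(\bar x)<f(x)<f(\bar x)+a$, which yields the KL property upon taking the desingularizing function $\varphi(s)=\bar c\, s^{1-\alpha}$ with $\bar c=[c(1-\alpha)]^{-1}$. Since $\partial f(x)=\emptyset$ forces $\dist(0,\partial f(x))=+\infty$, it suffices to treat $x\in\dom\partial f$. The core idea is to transfer a lower bound on $\dist(0,\partial F(x,y))$ — available from the KL property of $F$ — down to the desired lower bound on $\dist(0,\partial f(x))$, using the subgradient inclusion \eqref{dfg} of Lemma~\ref{Yx}(ii).

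First I would apply the uniformized KL property (Lemma~\ref{UKL}) to $F$ on the set $\Omega:=\{\bar x\}\times Y(\bar x)$. Its hypotheses are all met: by Lemma~\ref{Yx}(i) the set $Y(\bar x)$ is nonempty and compact, so $\Omega$ is nonempty and compact; every $\bar y\in Y(\bar x)$ is a minimizer, so $F$ takes the constant value $f(\bar x)$ on $\Omega$; assumption~(i) gives $\Omega\subseteq\dom\partial F$; and assumption~(ii) supplies the KL property with exponent $\alpha$ at each point of $\Omega$. Lemma~\ref{UKL} then furnishes $\epsilon_1,a_1,c_1>0$ such that $\dist(0,\partial F(x,y))\ge c_1\,(F(x,y)-f(\bar x))^\alpha$ whenever $f(\bar x)<F(x,y)<f(\bar x)+a_1$ and $\dist((x,y),\Omega)<\epsilon_1$.

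Next, for $x\in\dom\partial f$ near $\bar x$, I would select $\xi\in\partial f(x)$ attaining $\dist(0,\partial f(x))=\|\xi\|$ (the infimum is attained since $\partial f(x)$ is nonempty and closed and the norm is coercive). The inclusion \eqref{dfg} then produces some $y\in Y(x)$ with $(\xi,0)\in\partial F(x,y)$, whence, in the product norm, $\dist(0,\partial f(x))=\|\xi\|=\|(\xi,0)\|\ge\dist(0,\partial F(x,y))$; moreover $F(x,y)=f(x)$ because $y\in Y(x)$. Chaining this with the bound from Lemma~\ref{UKL} would give $\dist(0,\partial f(x))\ge c_1\,(f(x)-f(\bar x))^\alpha$, as desired — provided the point $(x,y)$ lies in the region where Lemma~\ref{UKL} applies.

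Guaranteeing this last condition is the main obstacle, and it is precisely what Lemma~\ref{Yx}(iv) is designed for: the minimizer $y$ is produced by \eqref{dfg} and is not under our direct control, so I must ensure $\dist((x,y),\Omega)<\epsilon_1$ uniformly over all such $y\in Y(x)$. Since $\dist((x,y),\Omega)=\sqrt{\|x-\bar x\|^2+\dist(y,Y(\bar x))^2}$, I would invoke Lemma~\ref{Yx}(iv) with $\nu=\epsilon_1$ to obtain $\epsilon_2>0$ so that $\dist(y,Y(\bar x))\le\epsilon_1/2$ for every $y\in Y(x)$ whenever $x\in B(\bar x,\epsilon_2)\cap\dom\partial f$ and $|f(x)-f(\bar x)|<\epsilon_2$. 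Taking $\epsilon:=\min\{\epsilon_2,\epsilon_1/2\}$ and $a:=\min\{a_1,\epsilon_2\}$ then forces both $\dist((x,y),\Omega)\le\epsilon_1/\sqrt2<\epsilon_1$ and $f(\bar x)<F(x,y)=f(x)<f(\bar x)+a_1$, completing the argument. The only delicate points are the attainment of the infimum defining $\dist(0,\partial f(x))$ and the uniform control of the possibly multivalued and discontinuous solution map $Y$, both of which are handled by Lemma~\ref{Yx}.
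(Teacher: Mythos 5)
Your proposal is correct and follows essentially the same route as the paper's proof: apply the uniformized KL property (Lemma~\ref{UKL}) to $F$ on $\Omega=\{\bar x\}\times Y(\bar x)$, use Lemma~\ref{Yx}(iv) to keep the minimizers $y\in Y(x)$ within the region where that bound applies, and transfer the bound to $\partial f(x)$ via the inclusion \eqref{dfg} together with $F(x,y)=f(x)$ for $y\in Y(x)$. The only cosmetic differences are that you select a distance-attaining $\xi\in\partial f(x)$ where the paper takes an infimum over $Y(x)$, and you use the exact product-norm formula for $\dist((x,y),\Omega)$ where the paper uses the cruder triangle bound.
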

\begin{proof}
\color{blue}Using the nonemptiness and compactness of $Y(\bar x)$ given by Lemma \ref{Yx}(i), and the facts that \color{black} $F(x,y)\equiv f (\bar x)$ on $\Omega := \{\bar x\}\times Y(\bar x)\revise{ \subseteq {\rm dom}\,\partial F}$ and  $F$ satisfies the KL property with exponent $\alpha$ at every point in $\Omega$, we deduce from Lemma \ref{UKL} that there exist $\nu, a ,c>0$  such that
\begin{align}\label{UniKL}
\dist \left(0,\partial F (x,y) \right)\ge c\left(F (x,y)- f (\bar{x})\right)^{\alpha}
\end{align}
for any $(x,y)$ satisfying
\begin{align}\label{ass22}
f (\bar x)<F (x,y)<f (\bar x)+a\ \ {\rm and}\ \
{\rm dist}((x,y),\Omega)<\nu.
\end{align}
\reviser{By decreasing $a$ if necessary,} without loss of generality, we may assume $a \in (0,1)$.

\color{blue}Next, using Lemma~\ref{Yx}(iv), we see that there exists $\epsilon\in(0,\min\{\nu/2,a\})$ such that
\begin{equation*}
 {\rm dist}(y,Y(\bar x))\le\frac{\nu}2
\end{equation*}
whenever $y\in Y(x)$ with $x\in B(\bar x,\epsilon)\cap {\rm dom\,}\partial f$ and $f (\bar x) < f (x) < f (\bar x) + \epsilon$.
Hence, for any $x \in B(\bar x,\epsilon)\cap \revise{{\rm dom\,}\partial f} $ with $f (\bar x) < f (x) < f (\bar x) + \epsilon$ and any $y\in Y(x)$, we have
\[
{\rm dist}((x,y),\Omega)\le \|x - \bar x\| + {\rm dist}(y,Y(\bar x)) \le \epsilon + \frac{\nu}2 < \nu,
\]
where the last inequality follows from the choice of $\epsilon$.
The above relation  together with the fact that  $\epsilon < a$ shows that
the relation \eqref{ass22} holds for any such $x$ and any $y\in Y(x)$.
Thus, \color{black} using \eqref{UniKL} we conclude that \color{blue}for any such $x$ and any $y\in Y(x)$, \color{black}
  \begin{align*}
        &\dist (0,\partial f (x)) = \dist \left(0,\begin{bmatrix}
                                        \partial f (x)\\
                                        0
                                       \end{bmatrix}\right)\ge \inf_{y\in Y(x)} \dist \left(0,\partial F (x,y)\right)\\
                        &\ge \inf_{y\in Y(x)} c\left(F (x,y)- f (\bar x)\right)^{\alpha} = c\left(f (x)- f (\bar{x})\right)^{\alpha},
  \end{align*}
  where the first inequality follows from \eqref{dfg} and the last equality follows from the definition of $Y(x)$. This completes the proof.
\end{proof}

Theorem~\ref{general} can be viewed as a generalization of \cite[Theorem~3.1]{LiPong17}, which \color{blue}studies \color{black} the KL exponent of the minimum of finitely many proper closed functions with known KL exponents. Indeed, let $f_i$, $1\le i\le m$, be \reviser{proper closed functions}. If we let $\mathbb{Y} = \R$ and define \color{blue}$F:\mathbb{X}\times \R\to \mathbb{R}\cup\{\infty\}$ \color{black} by
\begin{equation}\label{F_finite}
F(x,y) = \begin{cases}
  f_y(x) & {\rm if}\ y = 1,2,\ldots,m,\\
  \infty & {\rm otherwise},
\end{cases}
\end{equation}
then it is not hard to see that this $F$ is a proper closed function, and $\inf_{y\in \R} F(x,y) = \min_{1\le i\le m}f_i(x)$ for all $x\in \mathbb{X}$. Moreover, one can check directly from the definition that
\begin{equation}\label{dF_finite}
\partial F(x,y) = \begin{cases}
  \partial f_y(x)\times \R & {\rm if}\ y = 1,2,\ldots,m,\\
  \emptyset & {\rm otherwise}.
\end{cases}
\end{equation}
Thus, we have the following immediate corollary of Theorem~\ref{general}, which is a slight generalization of \cite[Theorem~3.1]{LiPong17} by dropping the continuity assumption on $\min_{1\le i\le m}f_i$.
\begin{corollary}[{{\bf KL exponent for minimum of finitely many functions}}]\label{cor:finitef}
  Let $f_i$, $1\le i\le m$, be proper closed functions, and define $f:= \min_{1\le i\le m}f_i$. Let $\bar x\in \color{blue}{\rm dom\,}\partial f\color{black}\cap \bigcap_{i\in I(\bar x)}\color{blue}{\rm dom\,}\partial f_i\color{black}$, where $I(\bar x):= \{i:f_i(\bar x) = f(\bar x)\}$. Suppose that for each $i\in I(\bar x)$, the function $f_i$ satisfies the KL property at $\bar x$ with exponent $\alpha_i\in [0,1)$. Then $f$ satisfies the KL property at $\bar x$ with exponent $\alpha = \max\{\alpha_i:i\in I(\bar x)\}$.
\end{corollary}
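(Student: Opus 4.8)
The plan is to obtain this as a direct instance of Theorem~\ref{general} applied to the function $F$ built in \eqref{F_finite}. Recall from the discussion preceding the statement that this $F$ is proper and closed, that $\inf_{y\in\R}F(x,y)=\min_{1\le i\le m}f_i(x)=f(x)$, and that its subdifferential is given by \eqref{dF_finite}. The first step is to verify that $F$ is level-bounded in $y$ locally uniformly in $x$; this is immediate, since $F(x,y)<\infty$ forces $y\in\{1,\dots,m\}$, so for any bounded neighborhood $V$ of a fixed $\bar x$ and any $\alpha\in\R$ the set $\{(x,y):x\in V,\ F(x,y)\le\alpha\}$ sits inside $V\times\{1,\dots,m\}$ and is therefore bounded. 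Thus Lemma~\ref{Yx} applies, and in particular $f$ is proper and closed. The hypothesis $\bar x\in\dom\partial f$ of the corollary supplies exactly the requirement $\bar x\in\dom\partial f$ needed by Theorem~\ref{general}.

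Next I would identify the data entering Theorem~\ref{general}. Since $F(\bar x,y)=f_y(\bar x)$ on $\{1,\dots,m\}$ and equals $+\infty$ otherwise, the minimizer set $Y(\bar x)=\Argmin_{y}F(\bar x,y)$ is precisely $I(\bar x)$, viewed as a subset of $\R$. To check hypothesis (i), note that for each $i\in I(\bar x)$ the formula \eqref{dF_finite} gives $\partial F(\bar x,i)=\partial f_i(\bar x)\times\R$, which is nonempty exactly because the assumption $\bar x\in\bigcap_{i\in I(\bar x)}\dom\partial f_i$ guarantees $\partial f_i(\bar x)\ne\emptyset$. Hence condition (i) holds.

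The substance of the argument is verifying condition (ii): that $F$ satisfies the KL property with the \emph{common} exponent $\alpha=\max\{\alpha_i:i\in I(\bar x)\}$ at every point $(\bar x,i)$ with $i\in I(\bar x)$. Here I would first localize in the $y$-variable. Fixing $i\in I(\bar x)$, take a neighborhood of the form $V_x\times(i-\tfrac12,i+\tfrac12)$, where $V_x$ is the neighborhood furnished by the KL property of $f_i$ at $\bar x$. Because the only integer in $(i-\tfrac12,i+\tfrac12)$ is $i$, any $(x,y)$ in this neighborhood with $F(x,y)<\infty$ has $y=i$ and $F(x,i)=f_i(x)$; moreover, from \eqref{dF_finite} one computes $\dist(0,\partial F(x,i))=\dist(0,\partial f_i(x)\times\R)=\dist(0,\partial f_i(x))$. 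Consequently the desingularizing function $\varphi_i(s)=\bar c_i\,s^{1-\alpha_i}$ witnessing the KL property of $f_i$ transfers verbatim to $F$ at $(\bar x,i)$, so $F$ has KL exponent $\alpha_i$ there.

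To upgrade $\alpha_i$ to the common exponent $\alpha\ge\alpha_i$, I would invoke the elementary fact that a smaller exponent implies a larger one locally: shrinking the threshold $a$ so that $a\le 1$ and writing $s=F(x,i)-f_i(\bar x)\in(0,a)\subseteq(0,1)$, we have $s^{\alpha_i}\ge s^{\alpha}$ since $s<1$ and $\alpha_i\le\alpha$; hence the exponent-$\alpha_i$ inequality $\dist(0,\partial F(x,i))\ge c\,s^{\alpha_i}$ yields $\dist(0,\partial F(x,i))\ge c\,s^{\alpha}$, giving KL exponent $\alpha$ at $(\bar x,i)$. This confirms condition (ii), and Theorem~\ref{general} then delivers that $f$ satisfies the KL property at $\bar x$ with exponent $\alpha$. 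I expect no genuine obstacle; the only points requiring care are the discreteness argument that localizes $F$ to the single index $i$, and the bookkeeping in the last step, namely restricting to the regime $s<1$ so that raising the exponent preserves the inequality.
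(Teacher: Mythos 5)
Your proposal is correct and follows essentially the same route as the paper: construct $F$ as in \eqref{F_finite}, note $Y(\bar x)=I(\bar x)$, verify level-boundedness and the nonemptiness of $\partial F(\bar x,i)$ via \eqref{dF_finite}, establish the KL property of $F$ at each $(\bar x,i)$ with the common exponent $\alpha$, and invoke Theorem~\ref{general}. The only difference is that you spell out the steps the paper dismisses as ``clear'' or ``routine'' (the discreteness localization in $y$, the distance identity $\dist(0,\partial f_i(x)\times\R)=\dist(0,\partial f_i(x))$, and the upgrade from $\alpha_i$ to $\max_i\alpha_i$ on the regime $s<1$), all of which are carried out correctly.
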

\begin{proof}
  Define $F$ as in \eqref{F_finite}. Then $F$ is proper and closed, and $f(x) = \inf_{y\in \R} F(x,y)$. Moreover, $I(x) = Y(x) := \Argmin_{y\in \R} F(x,y)$. It is clear that this $F$ is level-bounded in $y$ locally uniformly in $x$. Moreover, in view of \eqref{dF_finite} and the assumption that $\bar x\in \bigcap_{i\in I(\bar x)}\color{blue}{\rm dom\,}\partial f_i\color{black}$, we see that $\partial F(\bar x,\bar y)\neq \emptyset$ whenever $\bar y\in Y(\bar x)$. Finally, it is routine to show that $F$ satisfies the KL property with exponent $\alpha_i$ at $(\bar x,i)$ for $i \in I(\bar x)$. Thus, $F$ satisfies the KL property with exponent $\alpha = \max\{\alpha_i:i\in I(\bar x)\}$ on $\{\bar x\}\times I(\bar x)$. The desired conclusion now follows from Theorem~\ref{general}.
\end{proof}

The next corollary can be proved similarly as \cite[Corollary~3.1]{LiPong17} by using Corollary~\ref{cor:finitef} in place of \cite[Theorem~3.1]{LiPong17}.
\begin{corollary}%\label{cor:finitef2}
  Let $f_i$, $1\le i\le m$, be proper closed functions with $\color{blue}{\rm dom\,}f_i={\rm dom\,}\partial f_i\color{black}$ for all $i$, and define $f:= \min_{1\le i\le m}f_i$. Suppose that for each $i$, the function $f_i$ is a KL function with exponent $\alpha_i\in [0,1)$. Then $f$ is a KL function with exponent $\alpha = \max\{\alpha_i:1\le i\le m\}$.
\end{corollary}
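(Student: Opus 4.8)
The plan is to reduce the statement to a pointwise application of Corollary~\ref{cor:finitef}. First I would record that $f = \min_{1\le i\le m}f_i$ is itself proper and closed: its domain is $\bigcup_{i}{\rm dom\,}f_i\neq\emptyset$, and lower semicontinuity is inherited from the $f_i$ since the pointwise minimum of finitely many lower semicontinuous functions is lower semicontinuous. To conclude that $f$ is a KL function with exponent $\alpha$, it then suffices to verify the KL property with exponent $\alpha$ at an arbitrary point $\bar x\in {\rm dom\,}\partial f$.

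Fix such an $\bar x$. Since $\partial f(\bar x)\neq\emptyset$, we have $\bar x\in{\rm dom\,}f$, so $f(\bar x)$ is finite and the active index set $I(\bar x):=\{i:f_i(\bar x)=f(\bar x)\}$ is well defined and nonempty. The step requiring care is the domain bookkeeping demanded by Corollary~\ref{cor:finitef}, namely that $\bar x\in\bigcap_{i\in I(\bar x)}{\rm dom\,}\partial f_i$. This is exactly where the hypothesis ${\rm dom\,}f_i={\rm dom\,}\partial f_i$ enters: for each $i\in I(\bar x)$ we have $f_i(\bar x)=f(\bar x)<\infty$, so $\bar x\in{\rm dom\,}f_i={\rm dom\,}\partial f_i$, as needed. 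Moreover, because each $f_i$ is a KL function with exponent $\alpha_i$, it satisfies the KL property with exponent $\alpha_i$ at every point of ${\rm dom\,}\partial f_i$, and in particular at $\bar x$ for every $i\in I(\bar x)$. Corollary~\ref{cor:finitef} then applies and shows that $f$ satisfies the KL property at $\bar x$ with exponent $\max\{\alpha_i:i\in I(\bar x)\}$.

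It remains to pass from the exponent $\max\{\alpha_i:i\in I(\bar x)\}$ to the uniform exponent $\alpha=\max\{\alpha_i:1\le i\le m\}$. Since $\max\{\alpha_i:i\in I(\bar x)\}\le\alpha$, this is the standard monotonicity property of KL exponents: after shrinking the neighborhood and the constant $a$ so that $0<f(x)-f(\bar x)<1$ there, one has $(f(x)-f(\bar x))^{\beta}\ge(f(x)-f(\bar x))^{\alpha}$ whenever $\beta\le\alpha$ and the base lies in $(0,1)$, so the KL inequality with the smaller exponent immediately yields the one with exponent $\alpha$. As $\bar x\in{\rm dom\,}\partial f$ was arbitrary, $f$ is a KL function with exponent $\alpha$. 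I expect the argument to be essentially routine; the only genuinely load-bearing point is the domain verification $\bar x\in\bigcap_{i\in I(\bar x)}{\rm dom\,}\partial f_i$ via ${\rm dom\,}f_i={\rm dom\,}\partial f_i$, which is precisely what makes Corollary~\ref{cor:finitef} applicable at every point of ${\rm dom\,}\partial f$.
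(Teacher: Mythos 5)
Your proof is correct and follows exactly the route the paper intends: the paper's own "proof" is a one-line remark that the result follows as in \cite[Corollary~3.1]{LiPong17} by invoking Corollary~\ref{cor:finitef}, and your write-up simply supplies the details of that reduction — the domain verification via ${\rm dom\,}f_i={\rm dom\,}\partial f_i$ and the standard upgrade from the active-index exponent to the uniform exponent $\alpha$. Nothing to add.
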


Finally, we show in the next corollary that one can relax some conditions of Theorem~\ref{general} when $F$ is in addition convex.\color{blue}

 \begin{corollary}[{{\bf KL exponent via inf-projections under convexity}}]\label{convex_general}
 Let $F:\mathbb{X}\times\mathbb{Y}\to \mathbb{R}\cup\{\infty\}$ be a proper closed convex function and define $f(x):= \inf_{y\in \mathbb{Y}} F(x,y)$ and $Y(x):=\Argmin_{y\in \mathbb{Y}} F (x,y)$ for $x\in \mathbb{X}$. Suppose there exists $\bar u$ such that $f(\bar u)\in \R$ and $Y(\bar u)$ is nonempty and compact. Then the following statements hold:
 \begin{enumerate}[{\rm (i)}]
   \item The function $f$ is proper and closed, and $Y(x)$ is nonempty and compact for any $x\in {\rm dom}\,\partial f$.
   \item It holds that $\partial F(x,y)\neq \emptyset$ for all $x\in {\rm dom\,}\partial f$ and $ y\in Y( x)$.
   \item If $\bar x\in \revise{{\rm dom\,}\partial f}$, $\alpha \in [0,1)$ and   the function $F$ satisfies the KL property with exponent $\alpha$ at every point in $\{\bar x\}\times Y(\bar x)$, then $f$ satisfies the KL property at $\bar{x}$ with exponent $\alpha$.
 \end{enumerate}
 \end{corollary}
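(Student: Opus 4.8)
The plan is to reduce everything to the non-convex results already established, with the single genuinely new ingredient being an upgrade of the hypotheses: I will show that compactness of $Y(\bar u)$ at the \emph{single} point $\bar u$, together with convexity, already forces $F$ to be level-bounded in $y$ locally uniformly in $x$. Once this global level-boundedness is available, part~(i) is nothing but Lemma~\ref{Yx}(i), and part~(iii) follows by checking the two hypotheses of Theorem~\ref{general}. The step I expect to be the real obstacle is exactly this level-boundedness upgrade, since one must pass from information at one point $\bar u$ to a neighborhood-uniform statement at an \emph{arbitrary} $\bar x$; the tool that makes this possible is the base-point independence of the asymptotic function of a convex function.

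Concretely, I would argue as follows. The slice $g(\cdot):=F(\bar u,\cdot)$ is a proper closed convex function on $\mathbb{Y}$ (proper because $f(\bar u)\in\R$), and its minimizer set is exactly $Y(\bar u)$, which is a nonempty compact level set of $g$. Since all nonempty level sets of a proper closed convex function have the same recession directions, compactness of $Y(\bar u)$ gives $g^\infty(d)>0$ for every $d\neq0$. Using the base-point representation of the asymptotic function of a convex function (valid by \cite[Theorem~2.5.1]{AuTe2003}), one computes $g^\infty(d)=F^\infty(0,d)$, so that $F^\infty(0,d)>0$ for all $d\neq0$; crucially, this inequality is a property of $F$ alone and no longer refers to $\bar u$. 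I would then prove level-boundedness in $y$ locally uniformly in $x$ by contradiction. If it failed at some $\bar x$ and some level $\alpha$, then taking the neighborhoods $B(\bar x,1/k)$ would produce points $(x_k,y_k)$ with $x_k\to\bar x$, $F(x_k,y_k)\le\alpha$ and $t_k:=\|(x_k,y_k)\|\to\infty$. As $x_k$ remains bounded, $\|y_k\|\to\infty$, so the normalized vectors $(x_k,y_k)/t_k$ converge along a subsequence to some $(0,b^*)$ with $\|b^*\|=1$; feeding $t_k\to\infty$ and $(x_k,y_k)/t_k\to(0,b^*)$ into the definition of the asymptotic function would give $F^\infty(0,b^*)\le\liminf_k F(x_k,y_k)/t_k\le0$, contradicting $F^\infty(0,b^*)>0$.

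With level-boundedness established, part~(i) is immediate from Lemma~\ref{Yx}(i). For part~(ii), fix $x\in{\rm dom}\,\partial f$, choose $\xi\in\partial f(x)$ and any $y\in Y(x)$ (nonempty by part~(i), so that $F(x,y)=f(x)$); since $f$ is proper closed convex, being an inf-projection of a convex function, the convex subgradient inequality for $\xi$ together with $F(z,w)\ge f(z)$ for all $(z,w)$ yields $F(z,w)\ge f(x)+\langle\xi,z-x\rangle=F(x,y)+\langle(\xi,0),(z,w)-(x,y)\rangle$, which is precisely $(\xi,0)\in\partial F(x,y)$, so $\partial F(x,y)\neq\emptyset$. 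Finally, for part~(iii) I would simply invoke Theorem~\ref{general}: its level-boundedness hypothesis is the content of the second paragraph, its condition~(i) is part~(ii) specialized to $x=\bar x$, and its condition~(ii) is the assumed KL property with exponent $\alpha$ on $\{\bar x\}\times Y(\bar x)$. Hence $f$ satisfies the KL property at $\bar x$ with exponent $\alpha$, completing the proof.
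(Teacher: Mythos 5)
Your proposal is correct and follows essentially the same route as the paper: establish that $F$ is level-bounded in $y$ locally uniformly in $x$ via the asymptotic function and the compactness of $Y(\bar u)$, invoke Lemma~\ref{Yx}(i) for part~(i), verify the nonemptiness of $\partial F$ on the argmin set for part~(ii), and then apply Theorem~\ref{general} for part~(iii). The only cosmetic differences are that the paper runs the recession argument in the opposite direction (deriving $F(x,y+td)\le F(x,y)$ from a hypothetical direction $d$ with $F^\infty(0,d)\le 0$ and contradicting compactness of $Y(\bar u)$, rather than first proving $F^\infty(0,d)>0$ for all $d\neq 0$) and proves part~(ii) via the identity $f^*(w)=F^*(w,0)$ and the Fenchel--Young equality \eqref{Young} instead of your direct subgradient inequality; both variants are valid.
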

 \begin{proof}
   For (i), we first show that $F$ is level-bounded in $y$ locally uniformly in $x$. \color{black} Suppose to the contrary that there exist $x_0\in \mathbb{X}$ and $\beta\in \R$ so that ${\frak C} := \{(x,y): x\in B(x_0,1)\ {\rm and} \ F(x,y)\le \beta\}$ is unbounded. Then
   there exists $\{(x^k,y^k)\}\subset {\frak C}$ with $\|y^k\|\to \infty$. By passing to a subsequence if necessary, we may assume $\lim_{k\to\infty}\frac{y^k}{\|y^k\|} = d$ for some $d$ with $\|d\|=1$. Since $F(x^k,y^k)\le \beta$ and $\{x^k\}\subset B(x_0,1)$ is bounded, we have
   \[
   F^\infty(0,d) \le \liminf_{k\to \infty}\frac{F(x^k,y^k)}{\|(x^k,y^k)\|} \le  \liminf_{k\to \infty}\frac{\beta}{\|(x^k,y^k)\|} = 0,
   \]
   where $F^\infty$ is the asymptotic function of $F$ and the first inequality follows from \cite[Theorem~2.5.1]{AuTe2003}. This together with the convexity of $F$ and \cite[Proposition~2.5.2]{AuTe2003} shows that
   \[
   F(x,y+td)\le F(x,y)\ \ \mbox{for all}\ t > 0 \ \mbox{and for all}\ (x,y)\in \color{blue}{\rm dom\,}F. \color{black}
   \]
   \color{blue}Since $Y(\bar u)\neq \emptyset$ and $f(\bar u) \in \R$, we have $\{\bar u\}\times Y(\bar u)\subseteq {\rm dom}\,F$. Hence, we can take $\bar v\in Y(\bar u)$ and set $x = \bar u$ and $y = \bar v$ in the above display to conclude that
   $F(\bar u,\bar v + td)\le F(\bar u,\bar v)$ for all $t > 0$. This further implies that $\bar v + t d \in Y(\bar u)$ for all $t > 0$, which contradicts the compactness of $Y(\bar u)$. Thus, for any $x_0\in \mathbb{X}$ and $\beta\in \R$, the set $\{(x,y): x\in B(x_0,1)\ {\rm and} \ F(x,y)\le \beta\}$ is bounded. Using Lemma~\ref{Yx}(i), we see that (i) holds.

   Next, we prove (ii). To this end, fix any $u\in{\rm dom\,}\partial f $ and $ v\in Y(u)$. Note that the function $f$ is convex as inf-projection of the convex function $F$; see \cite[Proposition~2.22(a)]{RocWets98}. Now, for the proper convex function $f$, we have from the definition that $f^*(w) = \sup_x\{\langle w,x\rangle - f(x)\}= \sup_{x,y}\{\langle w,x\rangle - F(x,y)\} = F^*(w,0)$ for any $w\in \mathbb{X}$. Taking a $\bar w\in \partial f(u)$ and using \eqref{Young}, we see further that for any $v \in Y(u)$,
   \[
   F(u,v) + F^*(\bar w,0) = f(u) + f^*(\bar w) = \langle u,\bar w\rangle,
   \]
   where the equality $F(u,v) = f(u)$ holds because $v\in Y(u)$. In view of \eqref{Young}, the above relation further implies that $(\bar w,0)\in \partial F(u,v)$. This proves (ii).

 Now, suppose in addition that  $ \bar x\in {\rm dom\,}\partial f$, $\alpha \in [0,1)$ and the function $F$ satisfies the KL property with exponent $\alpha$ at every point in $\{\bar x\}\times Y(\bar x)$. Recall that we have shown that $F$ is level-bounded in $y$ locally uniformly in $x$ in the proof of item (i) and we have $\{\bar x\}\times Y(\bar x)\subseteq {\rm dom}\,\partial F$ from item (ii). The conclusion (iii) now follows by applying Theorem~\ref{general}.
\end{proof}

\begin{remark}\label{newremark3.1}
In addition to the inf-projection, another closely related operation, which appears frequently in optimization, would be taking the supremum over a family of functions. However, we would like to point out that, as opposed to the inf-projection, the supremum operation may not preserve KL exponents. For example, consider $F:\R^2\to \R$ defined by $F = \max\{f_1,f_2\}$ with $f_1(x) = x_1^2$ and $f_2(x) = (x_1+1)^2 + x_2^2 - 1$. Clearly, $f_1$ and $f_2$ are both quadratic and are KL functions with exponent $\frac12$. On the other hand, it was shown in \cite[Page 1617]{JiangLi} that $F$ has an optimal solution at $(0,0)$ and the KL exponent of $F$ at $(0,0)$ is $\frac34$ and cannot be $\frac12$. It would be of interest to see, under what additional conditions, the supremum operation can preserve the KL exponents. This could be one interesting future research direction.
\end{remark}

\subsection{Optimization models that can be written as inf-projections}\label{sec3.1}
\color{blue}Inf-projection is ubiquitous in optimization.  In this section, we present some commonly encountered models that can be written as inf-projections. This includes a large class of semidefinite-programming-representable (SDP-representable) functions, rank constrained least squares problems, and Bregman envelopes. These are important convex and nonconvex models whose explicit KL exponents were out of reach in previous studies. In Sections \ref{sec4} and \ref{sec5}, we will study their KL exponents based on their inf-projection representations, Theorem \ref{general} and Corollary \ref{convex_general}.
\subsubsection{Convex models that can be written as inf-projections}\label{Coninf}
\begin{description}
\item[(i) SDP-representable functions] Following \cite[Eq.~(1.3)]{HeNi10}, we say that a function $f:\R^n\to \mathbb{R}\cup\{\infty\}$, is semidefinite-programming-representable (SDP-representable) if its epigraph can be expressed as the feasible region of some SDP problems, i.e.,
\begin{align}\label{epiSDP}
\revise{{\rm epi\,}f} =\left\{(x,t)\in\R^n\times\R:\;\exists u\in \R^N\, {\rm\ s.t.\ }A_{00} + A_0t + \sum_{i=1}^{n}A_ix_i + \sum_{j=1}^NB_j u_j\succeq 0\right\}
\end{align}
for some $\{A_{00},A_0, A_1,\dots,A_n,B_1,\dots,B_N\}\subset\mathcal{S}^d$, \reviser{$d \ge 1$} and $N \ge 1$. These functions arise in various applications and include important examples such as least squares loss functions, $\ell_1$ norm, and nuclear norm, etc; see, for example, \cite[Section~4.2]{BenTalNemi01} for more discussions.  Using the symmetric matrices in \eqref{epiSDP}, we define a linear map
$\mathcal{A}:\mathcal{S}^d\to\R^{n+N+1}$ as
\begin{align}\label{mathcalA}
\mathcal{A}(W):= \left[\langle A_1,W\rangle \ \cdots\ \langle  A_n,W\rangle \ \langle B_1,W\rangle \ \cdots\ \langle  B_N,W\rangle \ \langle A_0,W\rangle \right]^T.
\end{align}
Then it is routine to show that $\mathcal{A}^*:\R^{n+N+1}\to \mathcal{S}^d$ is given by
$\mathcal{A}^*(x,u,t)= A_0t + \sum_{i=1}^{n}A_ix_i + \sum_{j=1}^NB_j u_j$ for $(x,u,t)\in \R^n \times \R^N \times \R$.
Now, if we define
\begin{align}\label{F}
F(x,u,t):=t + \delta_{\mathfrak{D}}(x,u,t)\ \ {\rm with}\ \
\mathfrak{D}=\left\{(x,u,t):\; A_{00} + \mathcal{A}^*(x,u,t)\succeq 0\right\},
\end{align}
then it holds that $f(x) = \inf_{u,t}F(x,u,t)$ for all $x\in \R^n$. We will show in Theorem \ref{SDP}  (using Corollary~\ref{convex_general}) that a proper closed SDP-representable  function   has KL property with exponent $\frac12$ at points satisfying  suitable assumptions on the SDP representation of $F$ in \eqref{F}.

\item[(ii) Sum of LMI-representable functions] We say that a function $h:\R^n\to\mathbb{R}\cup\{\infty\}$, is LMI-representable (see \cite[Eq.~(1.1)]{HeNi10}) if there exist symmetric matrices  $A_{00}$, $A_j$, $j=0,\ldots,n$, such that
  \[
  \revise{{\rm epi\,}h}=\left\{(x,t)\in \R^n\times \R:\;A_{00} +\sum_{j=1}^nA_jx_j + A_0t\succeq 0\right\}.
  \]
It is clear that   LMI-representable functions form a special class of  SDP-representable functions.
Many commonly used functions are LMI-representable such as the least squares loss function, the $\ell_1$, $\ell_2$, $\ell_\infty$ norm functions, the indicator functions of their corresponding norm balls, and the indicator function of the matrix operator norm ball, etc.

Let $f = \sum_{i=1}^mf_i$ be the sum of $m$ proper closed LMI-representable functions.
In Theorem~\ref{corSDP}, we show that    $f$ has KL property with exponent  $\frac12$ at points under suitable assumptions. Different from  Theorem~\ref{SDP}, which imposes the ``strict complementarity condition" on the corresponding $F$ in  \eqref{F},   Theorem~\ref{corSDP}   {\em directly} imposes such kind of condition  on the original function $f$. Explicit optimization models which can be written as sum of LMI-representable functions include (non-overlapping) group Lasso and group fused Lasso, and are discussed in Example \ref{appLMI}.

\item[(iii) Sum of LMI-representable functions and the nuclear norm] The nuclear norm  has been used for inducing low rank of solutions in various applications; see, for example, \cite{ReFaPa10} for more discussions.  Noticing that the nuclear norm is a  special  SDP-representable function, we further consider the sum of LMI-representable functions and the nuclear norm:
  \begin{align}\label{fnuclear}
  f(X):= \sum_{k=1}^pf_k(X) + \|X\|_*,
  \end{align}
  where $X\in\R^{m\times n}$, $\|X\|_*$ denotes the nuclear norm of $X$ (the sum of all singular values of $X$) and each \revise{$f_k:\R^{m\times n}\to\mathbb{R}\cup\{\infty\}$} is a proper closed LMI-representable function.  Define a function \revise{$F:{\cal S}^{n+m}\to \mathbb{R}\cup\{\infty\}$} by
  \begin{equation}\label{defF}
  F(Z):=\sum_{k=1}^pf_k(X)+\frac12({\rm tr}(U)+{\rm tr }(V))+ \delta_{\mathcal{S}_+^{m+n}}(Z);
  \end{equation}
  here, we partition the matrix variable $Z\in {\cal S}^{n+m}$ as follows:
  \begin{align}\label{Z}
  Z = \begin{bmatrix}
    U & X\\ X^T& V
  \end{bmatrix},
  \end{align}
  where $U\in {\cal S}^m$, $V\in {\cal S}^n$ and $X\in \R^{m\times n}$.
  Then one can show that $  f(X) = \inf_{U,V}F(Z)$; see \eqref{nucnorm} below. In Theorem \ref{nuclearLMI}, we will show that  $f$ in   \eqref{fnuclear} satisfies KL property with exponent $\frac12$ at points $\bar X$ such that  $0\in{\rm ri\,}\partial f(\bar X)$, under mild conditions. Explicit optimization models of the form \eqref{fnuclear} are introduced in Remark \ref{appLMInu}.
  \item[(iv) Convex models with $C^2$-cone reducible structure ] SDP representable functions are all semi-algebraic. As an attempt to go beyond semi-algebraicity,  we  analyze functions involving $C^2$-cone reducible structure. Specifically, we consider the following function $f:\mathbb{X}\to \R\cup\{\infty\}$:
\begin{align}\label{C2h}
f(x):=\ell(\mathcal{\mathcal{A}} x)+\langle v,x \rangle+ \gamma(x),
\end{align}
where $\gamma$ is a closed gauge\footnote{\revise{A gauge is a nonnegative positively homogeneous convex function that vanishes at the origin.}} whose polar gauge\footnote{\color{blue}See \cite[Proposition~2.1(iii)]{FriMaPong14}.\color{black}} is $C^2$-cone reducible, the function $\ell:\mathbb{Y}\to \R$ is strongly convex on any compact convex set
and has locally Lipschitz gradient, $\mathcal{A}:\mathbb{X} \to \mathbb{Y}$ is a linear map, and $v\in \mathbb{X}$.

Notice that $f(x) = \inf_t F(x,t)$, where
\begin{align}\label{C2F}
F(x,t):=\ell(\mathcal{A}x)+\langle v, x \rangle +  t +\delta_{\mathfrak{D}}(x,t),
\end{align}
with $\mathfrak{D}=\{(x,t)\in \mathbb{X}\times \R:  \gamma(x) \le t\}$. In Section \ref{sec4.2}, we will deduce that $f$ in \eqref{C2h} has KL property with exponent $\frac12$ at points satisfying assumptions involving relative interior of some subdifferential sets; see Corollary~\ref{C2gauge}. Optimization models in the form of \eqref{C2h} are presented in Example~\ref{ExC2}.
\end{description}

 \subsubsection{Nonconvex optimization models that can be written as inf-projections}\label{nonconinf}
 \begin{description}
   \item[(i) Difference-of-convex functions]We consider difference-of-convex (DC) functions of the following form:
  \begin{align}\label{DC}
    f (x) = P_1(x) - P_2({\cal A}x),
  \end{align}
  where $P_1:\mathbb{X}\to \mathbb{R}\cup\{\infty\}$ is a proper closed convex function, $P_2:\mathbb{Y}\to \R$ is a continuous convex function and ${\cal A}:\mathbb{X}\to \mathbb{Y}$ is a linear map. These functions arise in many contemporary applications including compressed sensing; see, for example, \cite{APX17,Tuy16,WCP18,YLHX15} and references therein. In the literature, the following function is a typically used majorant for designing and analyzing algorithms for minimizing DC functions. It is obtained from \eqref{DC} by majorizing the concave function $-P_2$ using the Fenchel conjugate $P_2^*$ of $P_2$:
  \begin{align}\label{liftedDC}
    F (x,y) = P_1(x) - \langle {\cal A}x,y\rangle  + P_2^*(y).
  \end{align}
  Note that $f(x) = \inf_yF(x,y)$ thanks to the definition of Fenchel conjugate and \cite[Theorem~12.2]{Roc70}. In Theorem~\ref{DCth}, we will deduce the KL exponent of  $f$ in \eqref{DC} from that of $F$ in \eqref{liftedDC}.

  \item[(ii) Bregman envelope] The Bregman envelope of a proper closed function \revise{$f:\mathbb{X}\to \mathbb{R}\cup\{\infty\}$},  is defined in \cite{BaCoNo2004} as follows:
\begin{align}\label{Benve}
  F_\phi(x):=\inf_y\{f(y) + \B_\phi(y,x)\}
\end{align}
where $\phi:\mathbb{X}\to \R$ is a differentiable convex function and
\begin{align}\label{Bphi}
 \B_\phi(y,x)= \phi(y)-\phi(x)-\langle \nabla \phi(x),y-x\rangle
\end{align}
is the Bregman distance.
Note that $F_\phi$ is an inf-projection by definition. In Section~\ref{BreE}, we will show that if $\phi$ satisfies Assumption~\ref{Ass11} and $f$ is a KL function with exponent $\alpha\in(0,1]$ and satisfies $\inf f > -\infty$, then  $F_\phi$ in \eqref{Benve} is also a KL function with exponent $\alpha\in(0,1]$. As we shall see in Remark \ref{remarkEnvelope}, the $F_\phi$ with $\phi$ satisfying   Assumption~\ref{Ass11} covers the widely studied Moreau envelope (see, for example, \cite[Section~1G]{RocWets98}) and the recently proposed forward-backward envelope \cite{StThPa17}.

\item[(iii) Least squares loss function with rank constraint ] Consider the following least squares loss function with rank constraint:
\begin{align}\label{leastRank}
f (X):=\frac{1}{2}\|{\cal A}X-b\|^2 + \delta_{{\rm rank}(\cdot)\le k}(X),
\end{align}
where $X\in\R^{m\times n}$, ${\cal A}:\R^{m\times n}\to \R^p$ is a linear map, $b\in\R^p$ and \reviser{$k$ is an integer between $1$ and $\min\{m,n\}-1$}. The model above is considered in many applications such as principal components analysis (PCA); see \cite{UdHoZaBo16} for more details.  Notice  that $f$ in \eqref{leastRank} is an inf-projection in the following form:
\begin{equation}\label{f4=hatf4}
f (X)=\inf_{U}\bigg\{\frac{1}{2}\|{\cal A}X-b\|^2 + \delta_{\mathfrak{\widehat D}}(X,U)\bigg\},
\end{equation}
where
\[
\mathfrak{\widehat D}:=\{(X,U)\in\R^{m\times n}\times\R^{m\times (m-k)}:\;U^TX=0 {\rm\ and\ }U^TU=I_{m-k}\},
\]
and $I_{m-k}$ is the identity matrix of size $m-k$. In Section \ref{Lr}, we first establish an auxiliary KL calculus rule concerning Lagrangian in Theorem~\ref{equalityconstrainLL}. Then, using this result together with Theorem~\ref{general}, we give an explicit KL exponent (dependent on $n$, $m$ and $k$) of $f$ in \eqref{leastRank} in Theorem~\ref{lstsqRankKL}.
 \end{description}
\color{black}

\section{KL exponents for some convex models}\label{sec4}

%In this section, we demonstrate how Theorem~\ref{general} can be applied to deducing the KL exponent of functions commonly encountered in applications.

\subsection{Convex models with SDP-representable structure}\label{sec4.1}

In this section, we explore the KL exponent of SDP-representable functions \color{blue} introduced in  Section~\ref{Coninf}(i). More specifically, we will deduce the KL exponent of a proper closed function $f$ with its epigraph represented as in \eqref{epiSDP}, under suitable conditions on $F$ in \eqref{F}. To this end, we collect the  $u$ components in $\mathfrak{D}$ in \eqref{F} for each fixed $x\in {\rm dom\,}\partial f$ and define the following set:
\begin{align}\label{Dbarx}
\mathfrak{D}_x=\left\{u\in \R^N:\;(x,u,f(x))\in\mathfrak{D}\right\}.
\end{align}
Roughly speaking, these are extra variables that correspond to the ``$x$-slice" in the ``lifted" SDP representation.  As we shall  see in the proof of Theorem~\ref{corSDP}, when $f$ is the sum of LMI-representable functions (which is SDP-representable), one can have $\mathfrak{D}_{x }=\{(f_1( x),\dots,f_m( x))\}$.

 We begin with three auxiliary lemmas. The first one relates the KL exponent of $f$, whose epigraph is represented as in \eqref{epiSDP}, to that of $F$  in \eqref{F}.
\begin{lemma}\label{appTh1}
  Let $f:\R^n\to\mathbb{R}\cup\{\infty\}$ be a proper closed SDP-representable function with its epigraph represented as in \eqref{epiSDP}. Then the function $F$ defined in \eqref{F} is proper, closed and convex.

  Next, suppose in addition that $\bar x\in{\rm dom\,}\partial f$, $\alpha\in [0,1)$, and that the following conditions hold:
  \begin{enumerate}[{\rm (i)}]
    \item The set $\mathfrak{D}_{\bar x}$ defined as in \eqref{Dbarx} is nonempty and compact.
    \item The function $F$ defined in \eqref{F} satisfies the KL property with exponent $\alpha$ at every point in $\{\bar x\}\times \mathfrak{D}_{\bar x}\times \{f(\bar x)\}$.
  \end{enumerate}
  Then $f$ satisfies the KL property at $\bar x$ with exponent $\alpha$.
\end{lemma}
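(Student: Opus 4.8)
The plan is to exhibit the given $f$ as the inf-projection of the convex function $F$ over the block variable $(u,t)\in\R^N\times\R$, and then to invoke Corollary~\ref{convex_general}. The reason for using the convex corollary rather than Theorem~\ref{general} directly is that Corollary~\ref{convex_general} trades the global hypothesis of level-boundedness in $y$ locally uniformly in $x$ for the mere existence of a single point with nonempty compact solution set; this is exactly what assumption~(i) supplies, and verifying genuine level-boundedness for the SDP $F$ would otherwise be awkward or even fail.

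First I would dispose of the claim that $F$ in \eqref{F} is proper, closed and convex. The set $\mathfrak{D}$ is the preimage of the closed convex cone $\mathcal{S}^d_+$ under the continuous affine map $(x,u,t)\mapsto A_{00}+\mathcal{A}^*(x,u,t)$, hence $\mathfrak{D}$ is closed and convex; it is nonempty because properness of $f$ forces ${\rm epi\,}f\neq\emptyset$, and any element of ${\rm epi\,}f$ lifts, via its certifying $u$, to a point of $\mathfrak{D}$. Therefore $\delta_{\mathfrak{D}}$ is proper, closed and convex, and adding the linear term $t$ preserves all three properties, which settles the first assertion of the lemma.

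Next I would record the inf-projection identity and the structure of the solution set. Reading off the representation \eqref{epiSDP}, one has $\inf_{u,t}F(x,u,t)=\inf\{t:\exists\,u,\ (x,u,t)\in\mathfrak{D}\}=\inf\{t:(x,t)\in{\rm epi\,}f\}=f(x)$, so the inf-projection of $F$ over $(u,t)$ coincides with the given $f$. Since $F$ equals $t$ on $\mathfrak{D}$ and $+\infty$ off it, the minimizers are obtained by minimizing $t$ first, which forces $t=f(x)$; hence, recalling the definition \eqref{Dbarx}, $Y(x):=\Argmin_{(u,t)}F(x,u,t)=\mathfrak{D}_x\times\{f(x)\}$.

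Finally I would check the hypotheses of Corollary~\ref{convex_general} and conclude. Taking the point required by that corollary to be $\bar x$ itself, assumption~(i) makes $Y(\bar x)=\mathfrak{D}_{\bar x}\times\{f(\bar x)\}$ nonempty and compact, and $f(\bar x)\in\R$ because $\bar x\in{\rm dom\,}\partial f\subseteq{\rm dom\,}f$; this verifies the standing hypothesis of Corollary~\ref{convex_general}. The set $\{\bar x\}\times Y(\bar x)$ appearing there is, written out in the variables $(x,u,t)$, exactly $\{\bar x\}\times\mathfrak{D}_{\bar x}\times\{f(\bar x)\}$, so assumption~(ii) is precisely the KL hypothesis of Corollary~\ref{convex_general}(iii); applying that item yields the KL property of $f$ at $\bar x$ with exponent $\alpha$. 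I expect no estimation to be required: the whole argument is carried by Corollary~\ref{convex_general}, and the only point demanding care is the bookkeeping that translates the epigraphical SDP data into the abstract inf-projection framework, namely the value identity $f=\inf_{u,t}F$ and the identification $Y(x)=\mathfrak{D}_x\times\{f(x)\}$ that makes assumptions~(i) and (ii) line up with the corollary's compactness and KL hypotheses.
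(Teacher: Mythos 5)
Your proposal is correct and follows essentially the same route as the paper: both establish that $F$ is proper, closed and convex via the closed convex set $\mathfrak{D}$, identify $f=\inf_{u,t}F$ with $\Argmin_{u,t}F(\bar x,u,t)=\mathfrak{D}_{\bar x}\times\{f(\bar x)\}$, and conclude by applying Corollary~\ref{convex_general}. Your write-up merely spells out the verification of the corollary's hypotheses in slightly more detail than the paper does.
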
\color{black}
\begin{proof}
  Observe from the definition that
   \[
  f(x)=\inf_{u,t} F(x,u,t).
  \]
  First, note that $\mathfrak{D}\neq\emptyset$ because $f$ is proper. Since $\mathfrak{D}$ is clearly closed and convex, we conclude that $F$ is \revise{proper, closed and convex}.
  \revise{We will now check the conditions in Corollary~\ref{convex_general} and apply the corollary to deduce the KL property of $f$ from that of $F$.}

  \revise{To this end}, by assumption, we see that $F$ satisfies the KL property with exponent $\alpha$ on $\{\bar x\}\times \mathfrak{D}_{\bar x}\times \{f(\bar x)\} = \{\bar x\}\times\Argmin_{u,t} F(\bar x,u,t)$ and that $\mathfrak{D}_{\bar x}$ is nonempty and compact. \color{blue}The desired conclusion now follows \color{black} from a direct application of Corollary~\ref{convex_general}. This completes the proof.
\end{proof}

\color{blue}The second lemma relates  the KL exponent of $F$ in \eqref{F} to that of  another SDP-representable function with carefully constructed matrices involved in its representation.
 \begin{lemma}\label{FF1}
 Let $f$ be a proper closed function and $\bar x\in {\rm dom}\, f$. Suppose that $f$ is SDP-representable with its epigraph represented as in \eqref{epiSDP}, and that there exists $(x^s,u^s,t^s)$ such that $A_{00}+\mathcal{A}^*(x^s,u^s,t^s)\succ 0$, where $A_{00}$ and $\mathcal{A}$ are given in \eqref{epiSDP} and \eqref{mathcalA} respectively. Let $F$ be defined as in \eqref{F} and $\mathfrak{D}_{\bar x}$ be defined as in \eqref{Dbarx}.\footnote{\revise{Notice that $F$ is proper and closed thanks to the existence of the Slater point $(x^s,u^s,t^s)$.}} Let $\bar u\in\mathfrak{D}_{\bar x}$ and suppose that $0\in\partial F(\bar x,\bar u, f(\bar x))$.
Then the following statements hold:
\begin{enumerate}[{\rm (i)}]
 \item It holds that $A_0\neq 0$. Moreover, the set ${\rm span\,}\{A_1,\dots,A_n,B_1,\dots,B_N,A_0\}$ has an orthogonal basis  $\{\hat A_0,\dots,\hat A_{p}\}$, where $p\ge 0$ and $\hat A_0\neq 0$, such that
  \[
  \begin{bmatrix}{\bf a}_1\ \dots\ {\bf a}_n\ {\bf b}_1\ \dots\ {\bf b}_N\ {\bf a}_0\end{bmatrix} = \begin{bmatrix}\hat {\bf a}_1 \dots \hat {\bf a}_p \ \hat {\bf a}_0\end{bmatrix}U
   \]
  for some $U\in\mathbb{R}^{(p+1)\times (n+N+1)}$ having full row rank and the entries of the $(p+1)^{\rm th}$ row of $U$ are $0$ except for $U_{p+1,n+N+1}=1$; here, ${\bf a}_i$, ${\bf b}_j$ and $\hat {\bf a}_k\in \R^{d^2}$ are the columnwise vectorization of the matrices $A_i$, $B_j$ and $\hat A_k$, respectively.
  \item Define $F_1:\R^{p+1}\to \R\cup\{\infty\}$ by
\begin{align}\label{D_3}
F_1(z,t):=t + \delta_{\mathfrak{D}_1}(z,t){\ with\ }\mathfrak{D}_1=\left\{(z,t):\;A_{00} + \hat A_0t +\sum_{w=1}^p\hat A_wz_w \succeq 0\right\},
\end{align}
 where $p\ge 0$ and $\{\hat A_0,\dots,\hat A_{p}\}$ is the orthogonal basis constructed in {\rm (i)}.\footnote{\revise{Note that $F_1$ is proper and closed thanks to the existence of the Slater point $(x^s,u^s,t^s)$.}}
Suppose that $U(\bar x,\bar u,f(\bar x))\in {\rm dom}\,\partial F_1$ and $F_1$ satisfies the KL property at $U(\bar x,\bar u,f(\bar x))$ with exponent $\alpha\in[0,1)$, where $U$ is the same as in {\rm (i)}.\footnote{\revise{Here and henceforth, $U(\bar x,\bar u,f(\bar x))$ is a short-hand notation for the matrix vector product $U\begin{bmatrix}
  \bar x\\ \bar u\\ f(\bar x)
\end{bmatrix}$.}}
Then $F$ satisfies the KL property at $(\bar x,\bar u,f(\bar x))$ with exponent $\alpha$.
  \end{enumerate}
\end{lemma}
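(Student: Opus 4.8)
The plan is to treat part (i) as linear algebra driven by a minimality argument, and part (ii) as a KL transfer under a surjective linear reparametrization. Throughout write $\bar w:=(\bar x,\bar u,f(\bar x))$ and set $T:={\rm span\,}\{A_1,\dots,A_n,B_1,\dots,B_N\}$.

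For part (i), the crux is to show $A_0\notin T$. Suppose not, so $A_0=\sum_{i}c_iA_i+\sum_j d_jB_j$ for some coefficients; then $\mathcal{A}^*(c,d,-1)=-A_0+\sum_i c_iA_i+\sum_j d_jB_j=0$, i.e. $(c,d,-1)\in\ker\mathcal{A}^*$. Since $\bar u\in\mathfrak{D}_{\bar x}$ we have $\bar w\in\mathfrak{D}$, and for every $s\ge 0$ the point $\bar w+s(c,d,-1)$ stays in $\mathfrak{D}$ because $A_{00}+\mathcal{A}^*(\bar w+s(c,d,-1))=A_{00}+\mathcal{A}^*(\bar w)\succeq 0$; along this ray $F$ equals $f(\bar x)-s\to-\infty$. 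This contradicts $0\in\partial F(\bar w)$, which (by convexity of $F$) makes $\bar w$ a global minimizer with finite value $f(\bar x)$. Hence $A_0\notin T$, and in particular $A_0\neq 0$ since $0\in T$. I would then set $\hat A_0:=A_0-P_TA_0\neq0$ (the component of $A_0$ orthogonal to $T$) and complete it by an orthogonal basis $\hat A_1,\dots,\hat A_p$ of $T$ to obtain an orthogonal basis of ${\rm span\,}\{A_1,\dots,A_n,B_1,\dots,B_N,A_0\}$. Reading off coordinates yields the matrix $U$; since each $A_i,B_j\in T\perp\hat A_0$ and the $\hat A_0$-coordinate of $A_0$ equals $1$, the last row of $U$ is zero except for $U_{p+1,n+N+1}=1$, and $U$ has full row rank because the original matrices span the whole of $S$.

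For part (ii), I would first show $F=F_1\circ\mathcal{U}$, where $\mathcal{U}(x,u,t):=U(x,u,t)$. The structure of $U$ from part (i) — in particular that its last row extracts the $t$-coordinate, so that the last entry of $U(x,u,t)$ is exactly $t$ — gives, for $(z,t)=U(x,u,t)$, the identity $A_{00}+\mathcal{A}^*(x,u,t)=A_{00}+\hat A_0 t+\sum_{w=1}^p\hat A_w z_w$; hence $(x,u,t)\in\mathfrak{D}\Leftrightarrow U(x,u,t)\in\mathfrak{D}_1$ and the two linear objectives coincide, so $F(x,u,t)=F_1(U(x,u,t))$. Since $U$ has full row rank, $\mathcal{U}$ is surjective, so the convex subdifferential chain rule gives $\partial F(w)=\mathcal{U}^*\partial F_1(\mathcal{U}w)$. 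As $\mathcal{U}^*$ is injective, $\|\mathcal{U}^*\zeta\|\ge\sigma\|\zeta\|$ with $\sigma>0$ the smallest singular value of $U$, whence $\dist(0,\partial F(w))\ge\sigma\,\dist(0,\partial F_1(\mathcal{U}w))$. Combining this with $F(w)-F(\bar w)=F_1(\mathcal{U}w)-F_1(\mathcal{U}\bar w)$ and the KL inequality for $F_1$ at $\mathcal{U}\bar w$ — applicable because $\mathcal{U}$ is continuous, so a small ball around $\bar w$ maps into the KL neighborhood of $\mathcal{U}\bar w$ and the value condition transfers verbatim — yields $\dist(0,\partial F(w))\ge \sigma c\,(F(w)-F(\bar w))^{\alpha}$ on a neighborhood of $\bar w$, i.e. the KL property of $F$ at $\bar w$ with exponent $\alpha$.

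The main obstacle is the step $A_0\notin T$ in part (i): it is precisely here that the standing hypothesis $0\in\partial F(\bar w)$ (a finite attained minimum) is essential, since mere properness of the convex $f$ would not by itself forbid $A_0\in T$. Once $\hat A_0\neq0$ is secured, both the orthogonal-basis bookkeeping and the surjective-linear-map KL transfer in part (ii) are routine.
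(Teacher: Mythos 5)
Your proposal is correct, and part (ii) follows essentially the same route as the paper (the paper verifies $F=F_1\circ U$ and then invokes the surjective-linear-composition transfer rule of \cite[Theorem~3.2]{LiPong17}, which is exactly the chain-rule-plus-smallest-singular-value argument you spell out). The genuine difference is in how the key step of part (i), namely $A_0\notin{\rm span\,}\{A_1,\dots,A_n,B_1,\dots,B_N\}$, is established. The paper produces a dual certificate: starting from $0\in(0_n,0_N,1)+N_{\mathfrak D}(\bar x,\bar u,f(\bar x))$ and using the Slater point together with \cite[Theorem~23.9]{Roc70} to write $N_{\mathfrak D}=\mathcal{A}N_{\mathcal{S}^d_+-A_{00}}(\mathcal{A}^*(\cdot))$, it extracts a matrix $Y$ with $\langle A_i,Y\rangle=\langle B_j,Y\rangle=0$ and $\langle A_0,Y\rangle=-1$, which separates $A_0$ from the span. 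You instead argue by contradiction via a recession direction: if $A_0$ were in the span, the kernel of $\mathcal{A}^*$ would contain a direction with $t$-component $-1$, along which $F$ stays feasible and decreases to $-\infty$, contradicting that $0\in\partial F(\bar x,\bar u,f(\bar x))$ forces a finite global minimum by convexity. Your route is more elementary (it does not need the Slater condition or the normal-cone chain rule for this step, only properness and convexity of $F$), while the paper's certificate $Y$ is the natural by-product of machinery it reuses elsewhere (e.g., in Lemma~\ref{FF2}). The remaining bookkeeping --- taking $\hat A_0=A_0-P_TA_0$ and an orthogonal basis of $T$, which is what the paper's Gram--Schmidt on $\{\bar A_1,\dots,\bar A_p,A_0\}$ with unit scaling of the last vector produces --- and the observation that the last row of $U$ then reads off the $t$-coordinate, agree with the paper.
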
\color{black}
 \begin{proof}
 Since $0\in\partial F(\bar x,\bar u, f(\bar x))$, we have in view of \cite[Exercise~8.8]{RocWets98} that
 \begin{align}\label{step1}
 0_{n+N+1}\in(0_n,0_N,1)+N_{\mathfrak{D}}(\bar x,\bar u, f(\bar x)),
 \end{align}
where $\mathfrak{D}$ is defined as in \eqref{F}, and $0_k$ is the zero vector of dimension $k$. Next, since $\delta_{\mathfrak{D}}(x,u,t)=[\delta_{\mathcal{S}^d_+-A_{00}}\circ\mathcal{A}^*](x,u,t)$
and we have $\mathcal{A}^*(x^s,u^s,t^s)\succ-A_{00}$ by assumption, using \cite[Theorem~23.9]{Roc70}, we deduce that
\begin{align*}
&N_{\mathfrak{D}}(\bar x,\bar u,f(\bar x))=\partial{\left[\delta_{\mathcal{S}^d_+-A_{00}}\circ\mathcal{A}^*\right]}(\bar x,\bar u,f(\bar x))=\mathcal{A}N_{\mathcal{S}^d_+-A_{00}}(\mathcal{A}^*(\bar x,\bar u,f(\bar x))).
\end{align*}
This together with \eqref{step1} implies that there exists $Y\in N_{\mathcal{S}^d_+-A_{00}}(\mathcal{A}^*(\bar x,\bar u,f(\bar x)))$ such that
\[
\langle A_1,Y\rangle =\dots=\langle  A_n,Y\rangle =\langle B_1,Y\rangle =\dots=\langle  B_N,Y\rangle =0\ \ {\rm but}\ \ \langle A_0,Y\rangle=-1;
\]
in particular, \revise{$A_0\not\in{\rm span\,}\{A_1,\dots,A_n,B_1,\dots,B_N\}$ and hence $A_0\neq 0$. }

\color{blue} If ${\rm span\,}\{A_1,\dots,A_n,B_1,\dots,B_N\} = \{0\}$, then $A_i = B_j=0$ for $i=1,\ldots,n$ and $j = 1,\ldots,N$. In this case, set $\hat A_0 = A_0$. We see that $\{\hat A_0\}$ is an orthogonal set and we have
\[
\begin{bmatrix}{\bf a}_1\ \dots\ {\bf a}_n\ {\bf b}_1\ \dots\ {\bf b}_N\ {\bf a}_0\end{bmatrix} = \hat {\bf a}_0\begin{bmatrix}0_{n+N}^T &1\end{bmatrix},
\]
where $0_{n+N}$ is the zero vector of dimension $n+N$. Thus, the conclusion in (i) holds in this case.

Otherwise, ${\rm span\,}\{A_1,\dots,A_n,B_1,\dots,B_N\} \neq \{0\}$ and we let $\{\bar A_1,\dots, \bar A_p\}$ be a maximal linearly independent subset of $\{A_1,\dots,A_n,B_1,\dots,B_N\}$. Then there exists $M_0\in\mathbb{R}^{p\times (n+N)}$ with full row rank such that $\left[{\bf a}_1\ \dots\ {\bf a}_n\ {\bf b}_1\ \dots\ {\bf b}_N\right] = \left[\bar {\bf a}_1\ \dots\ \bar {\bf a}_p\right]M_0$, where $\bar {\bf a}_i\in \R^{d^2}$ is the columnwise vectorization of $\bar A_i$. Thus
\begin{align}\label{nonlin}
\left[{\bf a}_1\ \dots\ {\bf a}_n\ {\bf b}_1\ \dots\ {\bf b}_N\ {\bf a}_0\right] = \left[\bar {\bf a}_1\ \dots\ \bar {\bf a}_p\ {\bf a}_0\right]\begin{bmatrix}
  M_0 & 0\\
  0   & 1
\end{bmatrix}.
\end{align}
Using Gram-Schmidt process followed by a suitable scaling  to $\{\bar A_1,\dots,\bar A_p,A_0\}$, there exists an invertible upper triangle  matrix $U_0\in\R^{(p+1)\times (p+1)}$ with the $(U_0)_{p+1,p+1}=1$ and an orthogonal basis $\{\hat A_1,\dots,\hat A_p,\hat A_0\}$ of ${\rm span\,}\{\bar A_1,\dots, \bar A_p,A_0\}$ such that $\left[\bar {\bf a}_1\ \dots\ \bar {\bf a}_p\ {\bf a}_0\right] = \begin{bmatrix}\hat {\bf a}_1\ \dots\ \hat {\bf a}_p \ \hat {\bf a}_0\end{bmatrix}U_0$, where $\hat {\bf a}_i$ is the columnwise vectorization of $\hat A_i$. This together with \eqref{nonlin} shows that
 \[
  \begin{bmatrix}{\bf a}_1\ \dots\ {\bf a}_n\ {\bf b}_1\ \dots\ {\bf b}_N\ {\bf a}_0\end{bmatrix} =\left[\bar {\bf a}_1\ \dots\ \bar {\bf a}_p\ {\bf a}_0\right]\begin{bmatrix} M_0 & 0\\
  0   & 1
\end{bmatrix}= \begin{bmatrix}\hat {\bf a}_1\ \dots\ \hat {\bf a}_p \ \hat {\bf a}_0\end{bmatrix}U,
   \]
   where $U := U_0\begin{bmatrix} M_0 & 0\\
  0   & 1
\end{bmatrix}$ has full row rank and the entries of the $(p+1)^{\rm th}$ row of $U$ are $0$ except for $U_{p+1,n+N+1}=1$. This proves (i).

Now, using the definition of $F_1$ in \eqref{D_3}, we have $F(x,u,t) = F_1(U(x,u,t))$.
Since $U$ is surjective and the KL exponent of $F_1$ is $\alpha$ at $U(\bar x,\bar u,f(\bar x))$, using a similar argument as in \cite[Theorem~3.2]{LiPong17}, the KL exponent of $F$ at $(\bar x,\bar u,f(\bar x))$ equals $\alpha$. This completes the proof. \color{black}
 \end{proof}

\color{blue}Finally, we rewrite $F_1$ in \eqref{D_3} suitably as a function on ${\cal S}^d$ that satisfies a certain ``strict complementarity" condition so that Lemma~\ref{KLG} can be \reviser{readily} applied to deducing the KL exponent of $F_1$ explicitly.
\begin{lemma}\label{FF2}
 Let $f$ be a proper closed function and $\bar x\in{\rm dom}\, f$. Suppose in addition that $f$ is SDP-representable with its epigraph represented as in \eqref{epiSDP}. Let $F$ be defined as in \eqref{F},  $\mathfrak{D}_{\bar x}$ be defined as in \eqref{Dbarx}, and $\bar u\in\mathfrak{D}_{\bar x}$. Suppose  that the following conditions hold:
  \begin{enumerate}[{\rm (i)}]
    \item{\bf(Slater's condition)} There exists $(x^s,u^s,t^s)$ such that $A_{00}+\mathcal{A}^*(x^s,u^s,t^s)\succ 0$, where $A_{00}$ and $\mathcal{A}$ are given in \eqref{epiSDP} and \eqref{mathcalA} respectively.\footnote{\revise{Note that this condition implies that both $F$ in \eqref{F} and $F_1$ in \eqref{D_3} are proper and closed.}}
    \item{\bf(Strict complementarity)} It holds that $0\in\revise{{\rm ri\,}\partial F(\bar x,\bar u, f(\bar x))}$.
  \end{enumerate}
 Let $F_1$ be defined as in \eqref{D_3}. Then $U(\bar x,\bar u,f(\bar x))\in {\rm dom}\,\partial F_1$ and $F_1$ satisfies the KL property at $U(\bar x,\bar u,f(\bar x))$ with exponent $\frac12$, where $U$ is given in Lemma \ref{FF1}{\rm (i)}.
\end{lemma}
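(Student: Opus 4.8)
The plan is to lift $F_1$ to a standard SDP objective on $\mathcal{S}^d$ and then invoke Lemma~\ref{KLG}. Write $\{\hat A_0,\dots,\hat A_p\}$ and $U$ for the orthogonal set and the full-row-rank matrix produced in Lemma~\ref{FF1}(i), and set $(\bar z,\bar t):=U(\bar x,\bar u,f(\bar x))$, so that $\bar t=f(\bar x)$. Define the injective linear map $\mathcal{B}:\R^{p+1}\to\mathcal{S}^d$ by $\mathcal{B}(z,t):=\hat A_0 t+\sum_{w=1}^p\hat A_w z_w$ (injective since the $\hat A_w$ are nonzero and mutually orthogonal), the affine injection $\mathcal{C}(z,t):=A_{00}+\mathcal{B}(z,t)$, and the affine subspace $\mathcal{M}:=A_{00}+{\rm Range\,}\mathcal{B}$. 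From the factorization in Lemma~\ref{FF1}(i) one reads off $\mathcal{A}^*=\mathcal{B}U$, whence $\bar W:=A_{00}+\mathcal{A}^*(\bar x,\bar u,f(\bar x))=\mathcal{C}(\bar z,\bar t)$, and $\bar W\succeq0$ because $\bar u\in\mathfrak{D}_{\bar x}$. Next, an orthogonal basis of $({\rm Range\,}\mathcal{B})^\perp$ lets me encode the membership $W\in\mathcal{M}$ as a linear system $\mathcal{A}'W=b'$ with $b'=\mathcal{A}'A_{00}\in{\rm Range\,}\mathcal{A}'$, and I define $G(W):=\frac{1}{\|\hat A_0\|^2}\langle\hat A_0,W\rangle+\delta_{\mathfrak{L}}(W)$ with $\mathfrak{L}=(\mathcal{A}')^{-1}\{b'\}\cap\mathcal{S}^d_+=\mathcal{M}\cap\mathcal{S}^d_+$. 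Orthogonality gives $G(\mathcal{C}(z,t))=t+c_0=F_1(z,t)+c_0$ on $\mathfrak{D}_1$ for the constant $c_0=\|\hat A_0\|^{-2}\langle\hat A_0,A_{00}\rangle$; thus $G$ is exactly the $\mathcal{S}^d$-rewrite of $F_1$ and has the form required by Lemma~\ref{KLG}.

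Now I verify the two hypotheses of Lemma~\ref{KLG} for $G$ at $\bar W$. Slater's condition transfers immediately: the point from assumption (i) satisfies $A_{00}+\mathcal{A}^*(x^s,u^s,t^s)\succ0$, and since $\mathcal{A}^*=\mathcal{B}U$ this matrix lies in $\mathcal{M}\cap{\rm int\,}\mathcal{S}^d_+$, so $(\mathcal{A}')^{-1}\{b'\}\cap{\rm int\,}\mathcal{S}^d_+\neq\emptyset$. The substance is to turn the strict-complementarity assumption (ii) into $0\in{\rm ri\,}\partial G(\bar W)$. Because $F=F_1\circ U$ with $U$ surjective, the convex chain rule gives $\partial F=U^*\partial F_1(U\cdot)$, and since taking relative interiors commutes with the linear image \cite[Theorem~6.6]{Roc70} while $U^*$ is injective, assumption (ii) yields $0\in{\rm ri\,}\partial F_1(\bar z,\bar t)$. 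Computing $\partial F_1(\bar z,\bar t)=(0_p,1)+\mathcal{B}^*N_{\mathcal{S}^d_+}(\bar W)$ through the chain rule \cite[Theorem~23.9]{Roc70} (valid by Slater) and again commuting ${\rm ri}$ with $\mathcal{B}^*$, I extract a dual certificate $Y\in{\rm ri\,}N_{\mathcal{S}^d_+}(\bar W)$ with $\mathcal{B}^*Y=(0_p,-1)$, i.e. $\langle\hat A_w,Y\rangle=0$ for $1\le w\le p$ and $\langle\hat A_0,Y\rangle=-1$. Decomposing $\partial G(\bar W)=C+N_{\mathcal{M}}(\bar W)+N_{\mathcal{S}^d_+}(\bar W)$ with $C=\|\hat A_0\|^{-2}\hat A_0$ and $N_{\mathcal{M}}(\bar W)=({\rm Range\,}\mathcal{B})^\perp$ \cite[Theorem~23.8]{Roc70}, and passing to relative interiors \cite[Corollary~6.6.2]{Roc70}, it then suffices to check $-C-Y\in({\rm Range\,}\mathcal{B})^\perp$; this is a one-line orthogonality computation using $\langle\hat A_0,C\rangle=1$ and $\langle\hat A_w,\hat A_0\rangle=0$. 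Hence $0=C+(-C-Y)+Y\in{\rm ri\,}\partial G(\bar W)$.

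With both hypotheses in hand, Lemma~\ref{KLG} gives, for any bounded neighbourhood of $\bar W$, a constant $c>0$ with $\dist(W,\Argmin G)\le c\,(G(W)-G(\bar W))^{\frac12}$ for $W\in\mathfrak{L}$ near $\bar W$. Since $0\in\partial G(\bar W)$ we also get $\bar W\in\Argmin G$, hence $(\bar z,\bar t)\in\Argmin F_1$ and in particular $U(\bar x,\bar u,f(\bar x))=(\bar z,\bar t)\in{\rm dom\,}\partial F_1$, which is the first assertion. Because $\mathcal{C}$ is an affine bijection from $\R^{p+1}$ onto $\mathcal{M}$ that maps $\mathfrak{D}_1$ onto $\mathfrak{L}$ and $\Argmin F_1$ onto $\Argmin G$, and its linear part $\mathcal{B}$ is bounded below, I transfer the bound to $\dist((z,t),\Argmin F_1)\le \tilde c\,(F_1(z,t)-F_1(\bar z,\bar t))^{\frac12}$ for $(z,t)\in\mathfrak{D}_1$ near $(\bar z,\bar t)$. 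This is exactly the second-order growth condition for the proper closed convex function $F_1$ at the minimizer $(\bar z,\bar t)$, which implies that $F_1$ satisfies the KL property with exponent $\frac12$ there (see the introduction and \cite{LiPong17,ZhSo2017}).

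I expect the main obstacle to be the middle step: correctly propagating the relative-interior (strict complementarity) condition from $\partial F$ down to $\partial G$. This requires both the chain-rule identities for $\partial F$ and $\partial F_1$ together with the fact that relative interior commutes with the two linear images $U^*$ and $\mathcal{B}^*$, and the production of the dual certificate $Y$ whose inner products against $\hat A_0,\dots,\hat A_p$ must be arranged so that the \emph{same} $Y$ simultaneously certifies $0\in{\rm ri\,}\partial G(\bar W)$. Everything else—the lifting, the verification of Slater, and the final bi-Lipschitz transfer of the growth bound—is routine once this normal-cone bookkeeping is set up.
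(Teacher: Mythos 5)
Your proposal is correct and follows essentially the same route as the paper's proof: lift $F_1$ to a linear SDP objective over $\mathcal{M}\cap\mathcal{S}^d_+$ (the paper's $F_2$ with the affine constraint encoded via a basis of $\ker\bar{\mathcal{A}}$, i.e.\ of $({\rm Range\,}\mathcal{B})^\perp$), transfer Slater's condition, propagate $0\in{\rm ri\,}\partial F(\bar x,\bar u,f(\bar x))$ through the injective adjoints $U^T$ and $\mathcal{B}^*$ to produce the dual certificate $Y\in{\rm ri\,}N_{\mathcal{S}^d_+}(\bar W)$ certifying $0\in{\rm ri\,}\partial G(\bar W)$, invoke Lemma~\ref{KLG}, and pull the resulting growth bound back along the bounded-below affine injection before applying the error-bound-to-KL equivalence (the paper cites \cite[Theorem~5]{BoNgPeSu17} for this last step). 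All the key identities you flag, including $\mathcal{A}^*=\mathcal{B}U$ and the one-line orthogonality check for $-C-Y$, match the paper's computations.
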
\color{black}
\begin{proof}
Define $\mathcal{\bar A}:\mathcal{S}^d\to\R^{p+1}$ by
\begin{align*}
 \mathcal{\bar A}(W):= \left[\langle \hat A_1,W\rangle \ \dots\ \langle \hat A_p,W\rangle \ \langle \hat A_0,W\rangle \right]^T,
\end{align*}
\color{blue}where $\{\hat A_0,\dots,\hat A_p\}$ is given by Lemma~\ref{FF1}(i). \color{black}
Since \revise{$\{\hat A_0,\dots,\hat A_p\}$} is orthogonal, we see that $\mathcal{\bar A}$ is surjective and
$\mathcal{\bar A}^*:\R^{p+1}\to\mathcal{S}^d$ with $\mathcal{\bar A}^*(z,t):=  \hat A_0t + \sum_{w=1}^p\hat A_wz_w$
 is injective. Also, for any $(z,t)\in\R^{p+1}$, by orthogonality,
\[
\mathcal{\bar A}\mathcal{\bar A}^*(z,t)=\mathcal{\bar A}\left(\hat A_0t +\sum_{w=1}^p\hat A_wz_w\right)=\left(\|\hat A_1\|_F^2z_1,\dots,\|\hat A_p\|_F^2z_p,\|\hat A_0\|_F^2t\right).
\]

Choose a basis $\{H_1,H_2,\dots,H_r\}$ of $\ker\mathcal{\bar A}$ \revise{and define} a linear map $\mathcal H:{\cal S}^d\to \R^{r}$ by\footnote{\revise{In the case when $\ker\mathcal{\bar A}=\{0\}$ so that the basis is empty (i.e., $r = 0$), we define ${\cal H}$ to be the {\em unique} linear map that maps ${\cal S}^d$ onto the {\em zero vector space}.}}
\begin{equation}\label{trans1.7}
\begin{aligned}
\mathcal H(W):= \left[\langle H_1,W\rangle \ \cdots\ \langle H_r,W\rangle \right]^T.
\end{aligned}
\end{equation}
\revise{Define} a proper closed function \reviser{$F_2: \mathcal{S}^d \rightarrow \mathbb{R} \cup \{\infty\}$} by
 \begin{equation}\label{F3}
 F_2(X):={\|\hat A_0\|_F^{-2}}\langle\hat A_0,X\rangle +\delta_{\mathfrak{D}_2}(X){\rm \ with\ } \mathfrak{D}_2:=\left\{X\in {\cal S}^d_+:\;\mathcal HX=\mathcal H A_{00}\right\}.
 \end{equation}
\color{blue}Thanks to the identity $(\ker \bar {\cal A})^\perp = {\rm Range\,}(\bar {\cal A}^*)$ and the fact that ${\cal H}X = {\cal H}A_{00}$ if and only if $X - A_{00}\in (\ker \bar {\cal A})^\perp$, \color{black}  we have the following relations concerning ${\frak D}_2$ and the ${\frak D}_1$ defined in \eqref{D_3}:\color{black}
\begin{equation}\label{eqqui}
  \begin{aligned}
  &\qquad\qquad\quad\quad (z,t)\in\mathfrak{D}_1 \Longrightarrow A_{00}+\mathcal{\bar A}^*(z,t)\in \mathfrak{D}_2,\\
  &X \in \mathfrak{D}_2 \Rightarrow \mbox{$\exists$ unique $(z,t)$ s.t. $A_{00}+\mathcal{\bar A}^*(z,t) = X$, and $(z,t)\in {\frak D}_1$},
  \end{aligned}
\end{equation}
\color{blue}where the second implication also makes use of  the injectivity of ${\bar {\cal A}}^*$. \color{black}
We then deduce further that for any $(z,t)\in \R^{p+1}$,
 \begin{equation}\label{F2val}
 \begin{aligned}
&F_2(A_{00}+\mathcal{\bar A}^*(z,t))-{\|\hat A_0\|_F^{-2}}\langle \hat A_0,A_{00}\rangle \\
&=\langle \mathcal{\bar A}\left({\|\hat A_0\|_F^{-2}} \hat A_0\right),(z,t)\rangle +\delta_{\mathfrak{D}_2}(A_{00}+\mathcal{\bar A}^*(z,t))\\
&=\revise{t} +\delta_{\mathfrak{D}_2}(A_{00}+\mathcal{\bar A}^*(z,t))=F_1(z,t),
\end{aligned}
\end{equation}
where \revise{the last equality follows from \eqref{eqqui}.}

\color{blue}Next, let $U$ be as in Lemma \ref{FF1}(i). Since the entries in the $(p+1)^{\rm th}$ row of $U$ are $0$ except for $U_{p+1,n+N+1}=1$, there exists $\bar z\in\mathbb{R}^p$ such that\footnote{\revise{Recall that $p\ge 0$. When $p=0$, we interpret $\bar z$ as a null vector so that $U(\bar x,\bar u,f(\bar x)) = f(\bar x)$.}}
\begin{equation}\label{barz}
U(\bar x,\bar u,f(\bar x)) = (\bar z,f(\bar x)).
\end{equation}
Now, define
\begin{equation}\label{barXnumber}
\bar X:=A_{00}+\mathcal{\bar A}^*(\bar z,f(\bar x)).
\end{equation}
\color{black}
We claim that $0\in\color{blue}{\rm ri\,}\partial F_2(\bar X)\color{black}$. We first show that
   \begin{align}\label{riF_1}
  0\in \color{blue}{\rm ri\,}\partial F_1(\bar z,f(\bar x))\color{black}.
  \end{align}
 \color{blue} In fact, using \cite[Theorem~23.9]{Roc70} (note that $U(x^s,u^s,t^s)\in {\rm int\,}{\frak D}_1$ thanks to assumption (i)) together with the assumption (ii), we have
\begin{align*}%\label{riF_U}
0 \in {\rm ri\,}\partial F(\bar x,\bar u,f(\bar x))={\rm ri\,}\left[U^T\partial F_1\left(U(\bar x,\bar u,f(\bar x))\right)\right]=U^T{\rm ri\,}\partial F_1\left(U(\bar x,\bar u,f(\bar x))\right),
\end{align*}
\color{black}where the second equality follows from \cite[Theorem~6.6]{Roc70}. Since $U$ has full row rank and thus $U^T$ is injective, \color{blue}recalling the definition of $\bar z$ in \eqref{barz}\color{black}, we deduce further that \eqref{riF_1} holds. Now, using this and \cite[Exercise~8.8]{RocWets98}, we have
   \begin{align}\label{step4}
 0\in \color{blue}{\rm ri\,}\partial F_1(\bar z,f(\bar x))\color{black}=\revise{(\underbrace{0,\ldots,0}_{p\ {\rm entries}},1)}  + \color{blue}{\rm ri\,}N_{\mathfrak{D}_1}(\bar z,f(\bar x))\color{black}.
   \end{align}
Now, notice that $\delta_{\mathfrak{D}_1}(z,t)=\left[\delta_{S_+^d-A_{00}}\circ\mathcal{\bar A}^*\right](z,t)$  and \color{blue}
\begin{equation}\label{Slater}
\mathfrak{D}_2 \ni X^s:= A_{00}+\mathcal{\bar A}^*(z^s,t^s)=A_{00}+\mathcal{A}^*(x^s,u^s,t^s)\succ0
\end{equation}\color{black}
with $(z^s,t^s)=U( x^s, u^s, t^s)$\color{blue}, where the inclusion holds thanks to \eqref{eqqui}\color{black}. Using these and \cite[Theorem~23.9]{Roc70}, we see that
\begin{align*}
\color{blue}{\rm ri\,}N_{\mathfrak{D}_1}(\bar z,f(\bar x))={\rm ri\,}{\partial\left[{\delta_{S_+^d-A_{00}}}\circ\mathcal{\bar A}^*\right]}(\bar z,f(\bar x))={\rm ri\,}\mathcal{\bar A}N_{S_+^d}(\bar X)=\mathcal{\bar A}\,{\rm ri\,}N_{S_+^d}(\bar X),\color{black}
\end{align*}
where the last equality follows from \cite[Theorem~6.6]{Roc70}. This together with \eqref{step4} implies that there exists $\tilde Y\in \color{blue}{\rm ri\,}N_{S_+^d}(\bar X)\color{black}$ such that
\begin{equation}\label{display}
\langle\hat A_1,\tilde Y\rangle=\dots=\langle \hat A_p,\tilde Y\rangle=0 \ {\rm and}\ \langle\hat A_0,\tilde Y\rangle=-1.
\end{equation}
The second relation in \eqref{display} gives $\langle \hat A_0,\tilde Y+{\|\hat A_0\|_F^{-2}}\hat A_0\rangle =\langle \hat A_0,\tilde Y\rangle +1=0$.
In addition, in view of the first relation in \eqref{display} and the orthogonality of \revise{$\{\hat A_0,\ldots,\hat A_p\}$}, we have
$
\langle \hat A_i,\tilde Y+{\|\hat A_0\|_F^{-2}}\hat A_0\rangle =\langle \hat A_i,\tilde Y\rangle +\langle \hat A_i,{\|\hat A_0\|_F^{-2}}\hat A_0\rangle =0$ for all $i=1,\dots,p$. Thus, it holds that
$
\tilde Y+{\|\hat A_0\|_F^{-2}}\hat A_0\in \color{blue}{\rm ker\,}\mathcal{\bar A}\color{black}$.
Hence, there exists $\omega\in\R^r$ such that
\begin{align}\label{YY}
\tilde Y+{\|\hat A_0\|_F^{-2}}\hat A_0=\sum_{i=1}^rH_i\omega_i
\end{align}
with $r$ and $H_i$ defined as in \eqref{trans1.7}.\footnote{\revise{In the case when $\ker \bar{\cal A} = \{0\}$ (i.e., $r = 0$), we have $\tilde Y+{\|\hat A_0\|_F^{-2}}\hat A_0 = 0$. In this case, we interpret $\omega$ as a null vector.}}
Using \eqref{YY} and the definition of $\tilde Y$, we have further that
\begin{align}\label{riF_3}
0=\tilde Y+{\|\hat A_0\|_F^{-2}}\hat A_0-\sum_{i=1}^rH_i\omega_i\in\color{blue}{\rm ri\,}N_{\mathcal{S}^d_+}(\bar X)\color{black}+{\|\hat A_0\|_F^{-2}}\hat A_0+\color{blue}{\rm Range\,}\mathcal{H}^*\color{black}.
\end{align}

On the other hand, using the definition of $F_2$ in \eqref{F3}, we have
 \begin{align*}
& \color{blue}{\rm ri\,}\partial F_2(\bar X)\color{black}={\|\hat A_0\|_F^{-2}}\hat A_0 + \color{blue}{\rm ri\,}\partial\delta_{\mathfrak{D}_2}(\bar X)\color{black}
={\|\hat A_0\|_F^{-2}}\hat A_0 + \color{blue}{\rm ri\,}\left(N_{{\cal H}^{-1}\{{\cal H}A_{00}\}}(\bar X)+N_{\mathcal{S}^d_+}(\bar X)\right)\color{black}\\
 &={\|\hat A_0\|_F^{-2}}\hat A_0 + \color{blue}{\rm ri\,}N_{{\cal H}^{-1}\{{\cal H}A_{00}\}}(\bar X)\color{black}+ \color{blue}{\rm ri\,}N_{\mathcal{S}^d_+}(\bar X)\color{black}
 ={\|\hat A_0\|_F^{-2}}\hat A_0 + \color{blue}{\rm Range\,}\mathcal{H}^*+ {\rm ri\,}N_{\mathcal{S}^d_+}(\bar X)\color{black},
  \end{align*}
  where the second equality follows from \revise{\cite[Theorem~23.8]{Roc70} and \eqref{Slater}}, and the third  equality follows from \cite[Corollary~6.6.2]{Roc70}. This together with \eqref{riF_3} shows
  \begin{equation}\label{F3haha}
  0\in\color{blue}{\rm ri\,}\partial F_2(\bar X)\color{black}.
  \end{equation}
 In view of \eqref{Slater} and \eqref{F3haha}, we \revise{can} now apply Lemma \ref{KLG} \revise{and} deduce that, for a given compact neighborhood $\mathfrak{U}$ of $\bar X$, there exists $c>0$ such that for any $X\in\mathfrak{U}\cap{\mathfrak{D}}_2$,
\begin{align}\label{USD+}
\dist(X,\Argmin F_2)&\le  c \left(F_2(X)- F_2(\bar X)\right)^{\frac12}.
\end{align}
Thus, fix an $\epsilon > 0$ so that $A_{00} + \bar {\cal A}^*(z,t) \in \frak U$ whenever $(z,t) \in B((\bar z,f(\bar x)),\epsilon)$; \color{blue}such an $\epsilon$ exists thanks to the definitions of $\bar z$ in \eqref{barz} and $\bar X$ in \eqref{barXnumber}. \color{black} Now, consider any $(z,t)$ satisfying $(z,t) \in B((\bar z,f(\bar x)),\epsilon)$ and $F_1(\bar z,f(\bar x)) < F_1(z,t) < F_1(\bar z,f(\bar x)) + \epsilon$. Then $(z,t)\in \color{blue}{\rm dom\,}F_1\color{black}$, which means $A_{00} + \bar {\cal A}^*(z,t)\in {\frak D}_2$ according to \eqref{eqqui}. Hence, using \eqref{USD+}, we have
\begin{align*}
& \dist^2((z,t),\Argmin F_1)\le  \|(z,t)-(z^*,t^*)\|^2\overset{\rm (a)}\le c_1\left\|\bar{\mathcal{A}}^*(z,t)-\bar{\mathcal{A}}^*(z^*,t^*)\right\|^2_F\\
&=c_1\|A_{00}+\bar{\mathcal{A}}^*(z,t)-X^*\|^2_F=c_1\dist^2(A_{00}+\bar{\mathcal{A}}^*(z,t),\Argmin F_2)\\
&\le c^2c_1 \left(F_2(A_{00}+\bar{\mathcal{A}}^*(z,t))- F_2(\bar X)\right)\overset{\rm (b)}=c^2c_1\left(F_1(z,t)- F_1(\bar z,f(\bar x))\right),
\end{align*}
where $X^*$ denotes the projection of $A_{00}+\bar{\mathcal{A}}^*(z,t)$ on $\Argmin F_2$ and $(z^*,t^*)$ is the corresponding element in $\Argmin F_1$ such that $X^*=A_{00}+\bar{\mathcal{A}}^*(z^*,t^*)$ (the existence of $(z^*,t^*)$ follows from \eqref{eqqui} and \eqref{F2val}), (a) holds for some $c_1>0$ because $\bar{\cal A}^*$ is injective, and (b) follows from \eqref{F2val}. Combining this with \cite[Theorem~5]{BoNgPeSu17}, we conclude that $F_1$ satisfies the KL property with exponent $\frac12$ at $(\bar z,f(\bar x)) = U(\bar x,\bar u,f(\bar x))$.
\end{proof}
We are now ready to state and prove our main result in this section.
\begin{theorem}[{{\bf KL exponent of SDP-representable functions}}]\label{SDP}
   Let $f$ be a proper closed function and \color{blue}$\bar x\in {\rm dom}\,\partial f$\color{black}. Suppose in addition that $f$ is SDP-representable with its epigraph represented as in \eqref{epiSDP} and that the following conditions hold:
  \begin{enumerate}[{\rm (i)}]
    \item{\bf(Slater's condition)} There exists $(x^s,u^s,t^s)$ such that $A_{00}+\mathcal{A}^*(x^s,u^s,t^s)\succ 0$, where $A_{00}$ and $\mathcal{A}$ are given in \eqref{epiSDP} and \eqref{mathcalA} respectively.
    \item{\bf(Compactness)} The set $\mathfrak{D}_{\bar x}$ defined as in \eqref{Dbarx} is nonempty and compact.
    \item{\bf(Strict complementarity)} It holds that $0\in\color{blue}{\rm ri\,}\partial F(\bar x,u, f(\bar x))\color{black}$ for all $u\in\mathfrak{D}_{\bar x}$, where $F$ is defined as in \eqref{F} and $\mathfrak{D}_{\bar x}$ is defined as in \eqref{Dbarx}.\footnote{\revise{We note that because of the Slater's condition, the function $F$ in \eqref{F} is proper and closed.}}
  \end{enumerate}
  Then $f$ satisfies the KL property at $\bar x$ with exponent $\frac12$.
\end{theorem}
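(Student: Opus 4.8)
The plan is to assemble the three auxiliary lemmas of this subsection into a chain that propagates the exponent $\frac12$ upward: from the reduced standard-SDP function $F_1$ in \eqref{D_3}, to the lifted representation $F$ in \eqref{F}, and finally to the SDP-representable function $f$ itself. All of the genuine work — the orthogonal-basis reduction producing $U$ and $F_1$, the verification of a relative-interior condition for the reduced problem, and the appeal to Sturm's error bound through Lemma~\ref{KLG} — is already carried out inside Lemmas~\ref{FF1} and \ref{FF2}. What remains is to invoke these lemmas in the correct order, uniformly over the whole fiber $\{\bar x\}\times\mathfrak{D}_{\bar x}\times\{f(\bar x)\}$, and then close with Lemma~\ref{appTh1}.

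First I would fix an arbitrary $\bar u\in\mathfrak{D}_{\bar x}$, which exists by the compactness assumption~(ii). The strict complementarity assumption~(iii) supplies $0\in{\rm ri\,}\partial F(\bar x,\bar u,f(\bar x))$, and since the relative interior of a convex set is contained in the set, this also gives the weaker inclusion $0\in\partial F(\bar x,\bar u,f(\bar x))$, which is exactly the hypothesis that Lemma~\ref{FF1} requires in order to construct the orthogonal basis $\{\hat A_0,\dots,\hat A_p\}$, the full-row-rank map $U$, and hence $F_1$. Together with the Slater condition~(i), the hypotheses of Lemma~\ref{FF2} (whose two assumptions are precisely (i) and (iii) here) are met, so I would apply it to conclude that $U(\bar x,\bar u,f(\bar x))\in\dom\partial F_1$ and that $F_1$ satisfies the KL property at $U(\bar x,\bar u,f(\bar x))$ with exponent $\frac12$. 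Feeding this, together with $0\in\partial F(\bar x,\bar u,f(\bar x))$, into Lemma~\ref{FF1}(ii) then yields that $F$ satisfies the KL property at $(\bar x,\bar u,f(\bar x))$ with exponent $\frac12$. Because $\bar u\in\mathfrak{D}_{\bar x}$ was arbitrary, $F$ has the KL property with exponent $\frac12$ at every point of $\{\bar x\}\times\mathfrak{D}_{\bar x}\times\{f(\bar x)\}$.

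Finally I would invoke Lemma~\ref{appTh1} with $\alpha=\frac12$: its hypothesis on $\mathfrak{D}_{\bar x}$ is assumption~(ii), and its KL hypothesis on $F$ is exactly what the previous step established, so the lemma delivers the KL property of $f$ at $\bar x$ with exponent $\frac12$. I do not expect a substantive obstacle at the level of this theorem, since it is essentially bookkeeping over the already-proven lemmas; the only points demanding care are the implication $0\in{\rm ri\,}\partial F\Rightarrow 0\in\partial F$ that bridges the relative-interior hypothesis of Lemma~\ref{FF2} with the plain-inclusion hypothesis of Lemma~\ref{FF1}, and the fact that Lemma~\ref{FF2} is stated in terms of the very object $U$ (and hence $F_1$) produced by Lemma~\ref{FF1}(i), so the two lemmas must be applied to the same $\bar u$ with a consistent choice of orthogonal basis.
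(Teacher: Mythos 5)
Your proposal is correct and follows essentially the same route as the paper's own proof: fix $\bar u\in\mathfrak{D}_{\bar x}$, use Lemma~\ref{FF2} to get the KL property of $F_1$ with exponent $\frac12$ at $U(\bar x,\bar u,f(\bar x))$, transfer it to $F$ via Lemma~\ref{FF1}(ii), and conclude with Lemma~\ref{appTh1}. The two bookkeeping points you flag (the implication $0\in{\rm ri\,}\partial F\Rightarrow 0\in\partial F$ and the consistency of $U$ across the two lemmas) are handled implicitly in the paper but are exactly the right things to check.
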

\begin{remark}
  In Theorem~\ref{SDP}, we require $0\in\color{blue}{\rm ri\,}\partial F(\bar x,u, f(\bar x))\color{black}$ for all $u\in\mathfrak{D}_{\bar x}$ with $\mathfrak{D}_{\bar x}$ defined as in \eqref{Dbarx}. This can be hard to check in practice. In Sections~\ref{sec4.3.1} and \ref{sec4.3.2}, we will impose additional assumptions on $f$ so that this condition can be replaced by $0 \in \color{blue}{\rm ri\,}\partial f(\bar x)\color{black}$, which is a form of strict complementarity condition \color{blue} imposed on the original function $f$ (rather than  the representation $F$ in the lifted space).\color{black}
\end{remark}
\begin{proof}
 In view of Lemma \ref{appTh1}, it suffices to show that $F$ satisfies the KL property with exponent $\frac12$ at every point in $\{\bar x\}\times \mathfrak{D}_{\bar x}\times \{f(\bar x)\}$.
\color{blue} Fix any $\bar u\in\mathfrak{D}_{\bar x}$. From Lemma \ref{FF2}, we know that $F_1$ defined as in \eqref{D_3} has KL property with exponent $\frac12$ at $U(\bar x,\bar u,f(\bar x))\in {\rm dom}\,\partial F_1$, where $U$ is given in Lemma \ref{FF1}(i). Using this together with Lemma \ref{FF1}, we know that $F$ satisfies the KL property with exponent $\frac12$ at  $(\bar x,\bar u,f(\bar x))$. This completes the proof. \color{black}
\end{proof}

We would like to point out that the third condition in Theorem \ref{SDP} cannot be replaced by ``$0\in\color{blue}{\rm ri\,}\partial f(\bar x)\color{black}$" in general. One concrete counter-example is $f(x)=x^4$. Indeed, for this function, the global minimizer is $0$ and we have $\partial f(0)=\{\nabla f(0)\}=\{0\}$, which implies that $0\in\color{blue}{\rm ri\,}\partial f(0)\color{black}$. Moreover, this function is SDP-representable:
  \begin{align*}%\label{x4}
 \color{blue}{\rm epi\,}f\color{black}=\left\{(x,t)\revise{\in \R^n\times \R}:\; \left[\begin{array}{cccc}
    1 & y & 0 & 0\\
    y &t  & 0 & 0\\
    0 & 0 & 1 & x\\
    0 & 0 & x & y \end{array}
  \right] \succeq0 \mbox{ for some }y\right\}.
  \end{align*}
  It is easy to check that the first two conditions of Theorem \ref{SDP} are satisfied for $\bar x = 0$.
  However, \color{blue}it can be directly verified that this $f$ does not have KL property with exponent $\frac12$ at $0$. \color{black} This concrete example suggests that the third condition in Theorem~\ref{SDP} cannot be replaced by $0\in\color{blue}{\rm ri\,}\partial f(\bar x)\color{black}$ in general.

Next, in Sections~\ref{sec4.3.1} and \ref{sec4.3.2}, we will look at special SDP-representable functions and show that the third condition in Theorem~\ref{SDP} can indeed be replaced by $0\in\color{blue}{\rm ri\,}\partial f(\bar x)\color{black}$ in those cases.

\subsection{Sum of LMI-representable functions}\label{sec4.3.1}
\color{blue}In this section, we discuss how the KL exponent of the sum of finitely many proper closed LMI-representable functions as defined in Section \ref{Coninf}(ii) can be deduced through  Theorem~\ref{SDP}. Compared with Theorem~\ref{SDP}, the strict complementarity condition in this section is now imposed {\em directly} on the original function.\color{black}

\begin{theorem}[{{\bf KL exponent of sum of LMI-representable functions}}]\label{corSDP}
  Let $f=\sum_{i=1}^mf_i$, where each \color{blue}$f_i:\R^n\to\mathbb{R}\cup\{\infty\}$ \color{black} is proper and closed. Suppose that each $f_i$ is LMI-representable, i.e., there exist \reviser{$d_i \ge 1$}  and matrices $\{A_{00}^i,A_0^i,A_1^i,\ldots,A_n^i\}\subset {\cal S}^{d_i}$ such that
  \[
  \color{blue}{\rm epi\,}f_i\color{black}=\left\{(x,t)\in \R^n\times \R:\;A_{00}^i +\sum_{j=1}^nA^i_jx_j + A_0^it\succeq 0\right\}.
  \]
  Suppose in addition that there exist {$x^{s}\in \R^n$ and $s^s\in \R^m$} such that for $i=1,\ldots,m$,
  \[
  A_{00}^i + \sum_{j=1}^nA^i_jx^{s}_j + A_0^is^s_i\succ 0.
  \]
  If $\bar x\in\color{blue}{\rm dom\,} \partial f\color{black}$ satisfies $0\in\color{blue}{\rm ri\,}\partial f(\bar x)\color{black}$, then $f$ satisfies the KL property at $\bar x$ with exponent $\frac12$.
\end{theorem}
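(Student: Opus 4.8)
The plan is to reduce the statement to Theorem~\ref{SDP} by building an explicit SDP representation of $f=\sum_{i=1}^m f_i$ in which the lifted variables are transparent, and then converting the hypothesis $0\in{\rm ri\,}\partial f(\bar x)$ into the strict complementarity condition (iii) of Theorem~\ref{SDP}. Each $f_i$ is convex (an LMI-representable function has a spectrahedral, hence convex, epigraph), and $(x,t)\in{\rm epi\,}f$ if and only if there are $t_1,\dots,t_m$ with $(x,t_i)\in{\rm epi\,}f_i$ for every $i$ and $\sum_{i=1}^m t_i\le t$. First I would stack the $m$ matrix inequalities $A_{00}^i+\sum_{j=1}^n A_j^i x_j+A_0^i t_i\succeq 0$ together with the $1\times 1$ block $t-\sum_{i=1}^m t_i\ge 0$ into one block-diagonal LMI. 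This represents ${\rm epi\,}f$ in the form \eqref{epiSDP} with lifted variable $u=(t_1,\dots,t_m)\in\R^m$ (so $N=m$), and it determines $\mathcal{A}$, $F$ and $\mathfrak{D}$ as in \eqref{mathcalA} and \eqref{F}. It then remains to check the three hypotheses of Theorem~\ref{SDP} at $\bar x$.

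Two of these are immediate. For the Slater condition, I would take the prescribed $(x^s,s^s)$, set $u^s=s^s$, and pick $t^s>\sum_{i=1}^m s^s_i$; each diagonal block is then strictly positive by assumption and the last block is strictly positive by the choice of $t^s$, so $A_{00}+\mathcal{A}^*(x^s,u^s,t^s)\succ 0$. For the compactness condition I would observe that $\mathfrak{D}_{\bar x}$ in \eqref{Dbarx} is a singleton: if $(\bar x,u,f(\bar x))\in\mathfrak{D}$ then $f_i(\bar x)\le u_i$ for all $i$ while $\sum_{i=1}^m u_i\le f(\bar x)=\sum_{i=1}^m f_i(\bar x)$, which forces $u_i=f_i(\bar x)$; hence $\mathfrak{D}_{\bar x}=\{\bar u\}$ with $\bar u:=(f_1(\bar x),\dots,f_m(\bar x))$, trivially nonempty and compact.

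The heart of the argument is condition (iii), namely $0\in{\rm ri\,}\partial F(\bar x,\bar u,f(\bar x))$, and this is where $0\in{\rm ri\,}\partial f(\bar x)$ is used. Writing $\bar p:=(\bar x,\bar u,f(\bar x))$ and using $\partial F(\bar p)=(0_n,0_m,1)+N_{\mathfrak{D}}(\bar p)$, it suffices to show $(0_n,0_m,-1)\in{\rm ri\,}N_{\mathfrak{D}}(\bar p)$. Since $\mathfrak{D}=\bigcap_{i=1}^m E_i\cap E_0$, where $E_i:=\{(x,u,t):(x,u_i)\in{\rm epi\,}f_i\}$ and $E_0:=\{(x,u,t):\sum_{i=1}^m u_i\le t\}$, and the Slater point lies in the interior of each of these sets, the normal-cone calculus together with \cite[Theorem~6.6 and Corollary~6.6.2]{Roc70} yields
\[
{\rm ri\,}N_{\mathfrak{D}}(\bar p)=\sum_{i=1}^m P_i^*\,{\rm ri\,}N_{{\rm epi\,}f_i}(\bar x,f_i(\bar x))+\R_{++}(0_n,\mathbf{1}_m,-1),
\]
where $P_i:(x,u,t)\mapsto(x,u_i)$, $P_i^*$ is its adjoint embedding, and $\mathbf{1}_m$ is the all-ones vector in $\R^m$. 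Here I would use that $N_{{\rm epi\,}f_i}(\bar x,f_i(\bar x))=\overline{{\rm cone}}(\partial f_i(\bar x)\times\{-1\})$, whose relative interior contains $(\eta_i,-1)$ for every $\eta_i\in{\rm ri\,}\partial f_i(\bar x)$. On the other hand, because the Slater point belongs to $\bigcap_{i=1}^m{\rm int\,}{\rm dom\,}f_i$, the subdifferential sum rule \cite[Theorem~23.8]{Roc70} and \cite[Corollary~6.6.2]{Roc70} give $0\in{\rm ri\,}\partial f(\bar x)=\sum_{i=1}^m{\rm ri\,}\partial f_i(\bar x)$, so I can choose $\eta_i\in{\rm ri\,}\partial f_i(\bar x)$ with $\sum_{i=1}^m\eta_i=0$. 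Taking each conical scale equal to $1$ and the $E_0$-coefficient equal to $1$ then gives $\sum_{i=1}^m P_i^*(\eta_i,-1)+(0_n,\mathbf{1}_m,-1)=(0_n,0_m,-1)$, which establishes (iii); Theorem~\ref{SDP} then yields KL exponent $\frac12$ at $\bar x$.

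The step I expect to require the most care is the relative-interior normal-cone bookkeeping in the last paragraph. Justifying the displayed formula for ${\rm ri\,}N_{\mathfrak{D}}(\bar p)$ hinges on invoking the constraint qualification supplied by the Slater point, so that the normal cone of the intersection splits as the sum of the individual normal cones and the relative interior distributes across the sum and the linear embeddings $P_i^*$; one must also confirm the fact that $\eta\in{\rm ri\,}\partial f_i(\bar x)$ gives $(\eta,-1)\in{\rm ri\,}N_{{\rm epi\,}f_i}(\bar x,f_i(\bar x))$ even when $\partial f_i(\bar x)$ is unbounded, i.e., when horizontal normals are present in the epigraphical normal cone. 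The remaining verifications are routine.
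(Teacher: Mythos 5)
Your proposal is correct and follows essentially the same route as the paper's proof: the same block-diagonal LMI lift with the singleton slice $\mathfrak{D}_{\bar x}=\{(f_1(\bar x),\dots,f_m(\bar x))\}$, the same Slater point, and the same verification of strict complementarity by writing $\hat{\mathfrak D}$ as an intersection of the epigraphical sets and the half-space $\{t\ge\sum_i s_i\}$, distributing relative interiors over the resulting sum of normal cones, and using $0\in{\rm ri\,}\partial f(\bar x)=\sum_i{\rm ri\,}\partial f_i(\bar x)$ to pick $\eta_i\in{\rm ri\,}\partial f_i(\bar x)$ summing to zero with all conical multipliers equal to $1$. The one step you flag as delicate — that $\eta\in{\rm ri\,}\partial f_i(\bar x)$ yields $(\eta,-1)\in{\rm ri\,}N_{{\rm epi\,}f_i}(\bar x,f_i(\bar x))$ even when $\partial f_i(\bar x)$ is unbounded — is handled in the paper exactly as you anticipate, via \cite[Theorem~6.3]{Roc70} and \cite[Corollary~6.8.1]{Roc70} applied to ${\rm cl\,cone}(\partial f_i(\bar x)\times\{-1\})$.
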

\begin{proof}
  We first derive an SDP representation of $\color{blue}{\rm epi\,}f\color{black}$. To this end, define
  \[
  \mathfrak{\hat{D}}:=\left\{(x,s,t):\; t\ge \sum_{i=1}^ms_i {\rm \ and\ }s_i\ge f_i(x),\ \forall i=1,\dots,m\right\}.
  \]
  Then it holds that $(x,s,t)\in \mathfrak{\hat D}$ if and only if
  \begin{equation}\label{Dhat}
  \begin{bmatrix}
    t-\sum_{i=1}^ms_i & 0 & \cdots &0 \\
  0& A_{00}^1 +\sum_{j=1}^nA^1_jx_j + A_0^1s_1& & \vdots \\
  \vdots& &\ddots & \\
  0& \cdots & &A^m_{00}+\sum_{j=1}^nA^m_jx_j + A_0^ms_m
  \end{bmatrix}\succeq 0.
  \end{equation}
  Since
  \begin{equation}\label{eqquiCor}
  \begin{aligned}
  &(x,t)\in\color{blue}{\rm epi\,}f\color{black}\ \ \Longleftrightarrow\ \ t\ge \sum_{i=1}^mf_i(x)\ \ \Longleftrightarrow\ \  \exists s\in\R^m\ {\rm s.t.}\ (x,s,t)\in\mathfrak{\hat D},
  \end{aligned}
  \end{equation}
  we see that $f$ is SDP-representable. Moreover, if we define
  \begin{equation}\label{LMI_f}
  F(x,s,t):=t +\delta_{\mathfrak{\hat D}}(x,s,t),
  \end{equation}
  then it holds that $f(x) = \inf_{s,t}F(x,s,t)$ for all $x\in \R^n$. We next show that $f$ and the $F$ defined in \eqref{LMI_f} satisfy the conditions required in Theorem~\ref{SDP}.

  First, from the definition of $x^{s}\in \R^n$ and $s^s\in \R^m$, we have
  \[
  \begin{bmatrix}
    t^s-\sum_{i=1}^ms^s_i & 0 & \cdots &0 \\
  0& A_{00}^1 +\sum_{j=1}^nA^1_jx^{s}_j + A_0^1s^s_1& & \vdots \\
  \vdots& &\ddots & \\
  0& \cdots & &A^m_{00}+\sum_{j=1}^nA^m_jx^{s}_j + A_0^ms^s_m
  \end{bmatrix}\succ 0,
  \]
  where $t^s := \sum_{i=1}^ms^s_i + 1$. This together with \eqref{Dhat} and \eqref{eqquiCor} shows that condition (i) in Theorem~\ref{SDP} holds.

  Next, note that the set $\{s:\; (\bar x,s,f(\bar x))\in \revise{\hat {\frak D}}\} = \{(f_1(\bar x),\ldots,f_m(\bar x))\}$, which is clearly nonempty and compact. In view of this and \eqref{LMI_f}, we conclude that
  condition (ii) in Theorem~\ref{SDP} is satisfied.

  Finally, we look at the strict complementarity condition, i.e., condition (iii) in Theorem~\ref{SDP}. Notice that the definition of $x^s\in \R^n$ implies
  \begin{equation}\label{Jan27}
    x^s\in\bigcap_{i=1}^m \color{blue}{\rm int\,}{\rm dom\,} f_i\color{black}.
  \end{equation}
  Write $\bar s := (f_1(\bar x),\cdots,f_m(\bar x))$ for notational simplicity.
  Define
  \[
  \mathfrak{C}_0=\left\{(x,s,t):\; t\ge \sum_{i=1}^ms_i\right\} {\rm \ and\ } \reviser{\mathfrak{C}_i=\{(x,s,t):\;s_i\ge f_i(x)\}},\ \forall \reviser{i=1,\dots,m}.
  \]
  Then $\mathfrak{\hat D}=\reviser{\bigcap_{i=0}^m\mathfrak{C}_i}$.
  Moreover, using \cite[Theorem~7.6]{Roc70}, \revise{we have} for \reviser{$i=1,\dots,m$} that
  \begin{align*}
  &\color{blue}{\rm ri\,}\reviser{\mathfrak{C}_i}\color{black}=\color{blue}{\rm ri\,}\{(x,s,t):\; \reviser{g_i(x,s,t)\le 0\}}\color{black}=\{(x,s,t)\in\color{blue}{\rm ri\,}{\rm dom\,}\reviser{g_i}\color{black}:\;\reviser{g_i(x,s,t)}<0\}\\
  &=\left\{(x,s,t)\in\color{blue}{\rm ri\,}{\rm dom\,}\reviser{f_i}\times \reviser{\R^{m} \times \mathbb{R}} \color{black}:\;\reviser{g_i(x,s,t)}<0\right\},
  \end{align*}
  where \reviser{$g_i(x,s,t)=f_i(x)-s_i$ for each $i$}.
  This together with \eqref{Jan27} shows that \reviser{$\bigcap_{i=0}^m \color{blue}{\rm ri\,}\mathfrak{C}_i\color{black}\neq\emptyset$}.
  Using this, \cite[Theorem~23.8]{Roc70} and the definition of $F$ in \eqref{LMI_f}, we have
  \begin{equation}\label{PFSDPcor}
  \partial F(\bar x,\bar s,f(\bar x))=(0_{n+m},1) + \reviser{\sum_{i=0}^m N_{\mathfrak{C}_i}(\bar x,\bar s,f(\bar x))},
  \end{equation}
  where $0_p$ is the zero vector of dimension $p$, and recall that $\bar s = (f_1(\bar x),\cdots,f_m(\bar x))$.

  We claim that $0\in \color{blue}{\rm ri\,}\partial F(\bar x,\bar s,f(\bar x))\color{black}$. To this end, note first that the assumption $0\in \color{blue}{\rm ri\,}\partial f(\bar x)\color{black}$ and \eqref{Jan27} together with \cite[Theorem~23.8]{Roc70} imply that $\bar x\in \bigcap_i \color{blue}{\rm dom\,}\partial f_i\color{black}$. Hence, we have from \cite[Theorem~23.7]{Roc70} that \reviser{for each $i=1,\ldots,m$},
\begin{equation}\label{NCi}
\begin{aligned}
& \reviser{N_{\mathfrak{C}_i}}(\bar x,\bar s,f(\bar x))=\color{blue}{\rm cl\,}\left[\revise{{\rm cone\,} \reviser{\partial g_i}(\bar x,\bar s,f(\bar x))}\right]={\rm cl} \reviser{\bigcup_{\lambda_i\ge 0}(\lambda_i\partial f_i(\bar x),0_{i-1},-\lambda_i,0_{m+1-i})}\color{black}
\end{aligned}
\end{equation}
where the second equality follows from \cite[Proposition~10.5]{RocWets98} and $\color{blue}{\rm cone\,}\mathfrak{B}\color{black}$ denotes the convex conical hull of $\mathfrak{B}$. Similarly, we also have
\begin{equation}\label{NC0}
\begin{aligned}
N_{\mathfrak{C}_0}(\bar x,\bar s,f(\bar x))={\rm cl}\bigcup_{\lambda_0\ge 0}\left(0_n,\lambda_0\cdot 1_m,-\lambda_0\right),
\end{aligned}
\end{equation}
where $1_m$ is the $m$-dimensional vector of all ones.
   Using \eqref{PFSDPcor}, \eqref{NCi} and \eqref{NC0}, we have
   \begin{align*}
   &\color{blue}{\rm ri\,}\partial F\color{black}(\bar x,\bar s,f(\bar x))\overset{\rm (a)}=(0_{n+m},1) + \reviser{\sum_{i=0}^m} \, \color{blue}{\rm ri\,} \reviser{N_{\mathfrak{C}_i}}\color{black}(\bar x,\bar s,f(\bar x))\\
   &\overset{\rm (b)}=(0_{n+m},1) + \reviser{\sum_{i=1}^m} \,\color{blue}{\rm ri\,}\left[{\rm cl} \reviser{\bigcup_{\lambda_i\ge 0}(\lambda_i\partial f_i(\bar x),0_{i-1},-\lambda_i,0_{m+1-i})}\right]\color{black} + \color{blue}{\rm ri\,}\left[{\rm cl}\bigcup_{\lambda_0\ge 0}\left(0_n,\lambda_0\cdot 1_m,-\lambda_0\right)\right]\color{black}\\
   &\overset{\rm (c)}=(0_{n+m},1) + \reviser{\sum_{i=1}^m\bigcup_{\lambda_i>0}(\lambda_i \, \color{blue}{\rm ri\,}\partial f_i(\bar x)\color{black},0_{i-1},-\lambda_i,0_{m+1-i})} + \bigcup_{\lambda_0>0}\left(0_n,\lambda_0\cdot 1_m,-\lambda_0\right)
 \end{align*}
  where (a) follows from \reviser{\eqref{PFSDPcor} and} \cite[Corollary~6.6.2]{Roc70}, (b) follows from \eqref{NCi} and \eqref{NC0}, and (c) follows from \cite[Theorem~6.3]{Roc70} and \cite[Corollary~6.8.1]{Roc70}. This together with $0\in\revise{{\rm ri\,}\partial f(\bar x)}$ yields
  \begin{align*}
  0&\in(\color{blue}{\rm ri\,}\partial f\color{black}(\bar x),0_m,0)=(  0_n,0_m,1)+\left(\color{blue}{\rm ri\,}\partial f\color{black}(\bar x),-1_m,0\right)+(  0_n,1_m,-1)\\
  &=(0_n,0_m,1)+\left(\sum_{i=1}^m\color{blue}{\rm ri\,}\partial f_i\color{black}(\bar x),-1_m,0\right)+(0_n,1_m,-1)\subseteq \color{blue}{\rm ri\,}\partial F\color{black}(\bar x,\bar s,f(\bar x)),
  \end{align*}
where the second equality follows from \cite[Theorem~23.8]{Roc70} and \cite[Corollary~6.6.2]{Roc70}, thanks to \eqref{Jan27}.
Thus, condition (iii) in Theorem \ref{SDP} is also satisfied. The desired conclusion now follows from Theorem~\ref{SDP}.
\end{proof}
\begin{example}\label{appLMI}
Note that $\ell_1$-norm, $\ell_2$-norm, convex quadratic functions and indicator functions of second-order cones are all LMI-representable. Using these, we can infer from Theorem \ref{corSDP} that the following functions $f$ satisfy the KL property with exponent $\frac12$ at any $\bar x$ that verifies $0 \in \color{blue}{\rm ri\,}\partial f\color{black}(\bar x)$:
  \begin{enumerate}[{\rm (i)}]
  \item {\bf Group Lasso with overlapping blocks of variables:}
  \[
  f(x) = \frac12\|Ax - b\|^2 +  \sum_{i=1}^s w_i\|x_{J_i}\|,
   \]
  where $b\in \R^p$, $A\in \R^{p\times n}$, $J_i \subseteq \{1,\ldots,n\}$ with $\bigcup_{i=1}^s J_i =\{1,\ldots,n\}$, $x_{J_i}$ is the subvector of $x$ indexed by $J_i$, and $w_i \ge 0$, $i=1,\ldots,s$. We emphasize here that $J_i\cap J_j$ can be nonempty when $i\neq j$.
  \item {\bf Least squares with products of second-order cone constraints:}
  \[
  f(x) = \frac12\|Ax - b\|^2 + \delta_{\prod_{i=1}^s \color{blue}{\rm SOC\,}_{n_i}\color{black}}(x),
   \]
  where $b\in \R^p$, $A\in \R^{p\times n}$, $x=(x_1,\ldots,x_s)\in \prod_{i=1}^s\R^{n_i}$ with \reviser{$x_i \in \mathbb{R}^{n_i}$, $i=1,\ldots,s$}, and ${\rm SOC}_{n_i}$ is the second-order cone in $\mathbb{R}^{n_i}$.
  \item {\bf Group fused Lasso \cite{Group_Fused}:}
  \[
  f(x) = \frac12\|Ax - b\|^2 +  \sum_{i=1}^s w_i \|x_{J_i}\|+  \sum_{i=2}^s \nu_i \|x_{J_i}-x_{J_{i-1}}\|,
   \]
   where $b\in \R^p$, $A\in \R^{p\times \reviser{n}}$ \reviser{with $n=rs$ for some $r \in \mathbb{N}$}, $J_i$ is an equi-partition of $\{1,\ldots,n\}$ in the sense that $\bigcup_{i=1}^s J_i =\{1,\ldots,n\}$, $J_i \cap J_{j} =\emptyset$ and $|J_i|=|J_j|=r$ for $i \neq j$, $w_i$, $\nu_i \ge 0$, $i=1,\ldots,s$.
      \end{enumerate}
\end{example}
\subsection{Sum of LMI-representable functions and the nuclear norm}\label{sec4.3.2}
\color{blue}In this section, we apply Theorem~\ref{corSDP} and Corollary~\ref{convex_general} to derive the KL exponent of the function in \eqref{fnuclear} under suitable assumptions.  It is known (see, for example \cite{ReFaPa10}) that the nuclear norm can be expressed as
\begin{equation}\label{nucnorm}
\|X\|_* = \frac12\inf_{U,V}\left\{{\rm tr}(U) + {\rm tr}(V) :\; \begin{bmatrix}
  U&X\\
  X^T&V
\end{bmatrix}\succeq 0, \, U \in {\cal S}^m,V \in {\cal S}^n\right\}
\end{equation}
for any $X\in \R^{m\times n}$.
\reviser{This fact plays an important role for our analysis later on, and shows that the nuclear norm is an SDP representable function. To the best of our knowledge, it is not known that whether the nuclear norm is LMI representable. Our analysis is an attempt to generalize our results on the sum of LMI representable functions (with strict complementarity assumption on the original function) to a large subclass of SDP representable functions that arises in many important areas such as matrix completion \cite{ReFaPa10}.}

%will be used in the proof of the following main theorem of this section.

  \begin{theorem}[{{\bf KL exponent of sum of LMI-representable functions and the nuclear norm}}]\label{nuclearLMI}
 Let $f$ be defined as in \eqref{fnuclear} and let  symmetric matrices  $A_{00}^k$, $A^k_0$, $A_{ij}^k$, \reviser{$i=1,\ldots,m$ and $j=1,\ldots,n$,} be \color{black} such that
  \[
  \color{blue}{\rm epi\,}f_k\color{black}=\left\{(X,t):\;A_{00}^k +\sum_{i=1}^m\sum_{j=1}^n A^k_{ij}X_{ij} + A_0^kt\succeq 0\right\}.
  \]
  Suppose in addition that there exist $X^{s}\in \R^{m\times n}$ and $s^s\in \R^p$ such that for $k=1,\ldots,p$,
  \[
 A_{00}^k + \sum_{i=1}^m\sum_{j=1}^n A^k_{ij}X^{s}_{ij} + A_0^ks^s_k\succ 0.
  \]
  If $\bar X\in \color{blue}{\rm dom\,}\partial f\color{black}$ satisfies $0\in\color{blue}{\rm ri\,}\partial f(\bar X)\color{black}$, then $f$ satisfies the KL property at $\bar X$ with exponent $\frac12$.
  \end{theorem}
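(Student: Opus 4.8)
The plan is to represent $f$ as an inf-projection of a sum of LMI-representable functions and then chain Theorem~\ref{corSDP} with Corollary~\ref{convex_general}. Using the SDP representation \eqref{nucnorm} of the nuclear norm together with \eqref{defF} and the partition \eqref{Z}, we have $f(X)=\inf_{U,V}F(Z)$. Since ${\rm tr}(U)+{\rm tr}(V)={\rm tr}(Z)$, the function $F$, regarded as a function of the single variable $Z\in{\cal S}^{m+n}$, equals $\sum_{k=1}^p f_k(X)+\frac12{\rm tr}(Z)+\delta_{{\cal S}^{m+n}_+}(Z)$, a sum of $p+1$ proper closed LMI-representable functions: each $f_k$ is LMI-representable because its defining matrices couple to the entries of the off-diagonal block of $Z$ (which depend linearly on $Z$), while $g(Z):=\frac12{\rm tr}(Z)+\delta_{{\cal S}^{m+n}_+}(Z)$ is LMI-representable via the block pencil ${\rm diag}(Z,\,t-\frac12{\rm tr}(Z))\succeq 0$. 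Note that the grouping of variables is immaterial for the KL property: the same $F$ is viewed as a function of $Z$ when applying Theorem~\ref{corSDP}, and as a function of $(X;U,V)$ when applying Corollary~\ref{convex_general}.

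First I would check the hypotheses of Theorem~\ref{corSDP} for $F$. The Slater condition holds with $Z^s=\begin{bmatrix}\rho I_m & X^s\\ (X^s)^T & \rho I_n\end{bmatrix}$ for $\rho>\|X^s\|_{\rm op}$ (so $Z^s\succ 0$ by a Schur complement), the given $s^s_k$ for the $f_k$-blocks, and a large $s^s_{p+1}$ for the $g$-block. The substantive and, I expect, hardest step is the strict complementarity: I must deduce $0\in{\rm ri\,}\partial F(\bar Z)$ for every minimizer $\bar Z\in\{\bar X\}\times Y(\bar X)$ (where $Y(X):=\Argmin_{U,V}F(Z)$) from the hypothesis $0\in{\rm ri\,}\partial f(\bar X)$.

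To carry this out I would use the relative-interior subdifferential sum and chain rules of \cite{Roc70} (justified by the Slater points $X^s$ and $Z^s$). Writing $h:=\sum_{k}f_k$ and letting $\pi(Z)=X$ be the projection onto the off-diagonal block, whose adjoint is $\pi^*(\xi)=\begin{bmatrix}0 & \frac12\xi\\ \frac12\xi^T & 0\end{bmatrix}$, one obtains ${\rm ri\,}\partial F(\bar Z)=\pi^*({\rm ri\,}\partial h(\bar X))+\frac12 I_{m+n}+{\rm ri\,}N_{{\cal S}^{m+n}_+}(\bar Z)$, whereas ${\rm ri\,}\partial f(\bar X)={\rm ri\,}\partial h(\bar X)+{\rm ri\,}\partial\|\cdot\|_*(\bar X)$. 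The hypothesis then supplies $\xi_0\in{\rm ri\,}\partial h(\bar X)$ with $\eta:=-\xi_0\in{\rm ri\,}\partial\|\cdot\|_*(\bar X)$, and choosing this $\xi_0$ reduces the goal to the key lemma: $\eta\in{\rm ri\,}\partial\|\cdot\|_*(\bar X)$ implies $\Xi:=\begin{bmatrix}-\frac12 I_m & \frac12\eta\\ \frac12\eta^T & -\frac12 I_n\end{bmatrix}\in{\rm ri\,}N_{{\cal S}^{m+n}_+}(\bar Z)$. This lemma is the main obstacle. I would prove it from the reduced SVD $\bar X=P\Sigma Q^T$ (so that the unique minimum-trace completion is $\bar Z=\begin{bmatrix}P\\ Q\end{bmatrix}\Sigma\begin{bmatrix}P\\ Q\end{bmatrix}^T$, of rank $r={\rm rank}(\bar X)$, whence $Y(\bar X)$ is a singleton and strict complementarity need only be checked at one point), together with the descriptions ${\rm ri\,}\partial\|\cdot\|_*(\bar X)=\{PQ^T+P_\perp \widetilde W Q_\perp^T:\ \|\widetilde W\|_{\rm op}<1\}$ and ${\rm ri\,}N_{{\cal S}^{m+n}_+}(\bar Z)=\{S:\ -S\succeq 0,\ {\rm range}(-S)={\rm range}(\bar Z)^\perp\}$. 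A short singular-value/Schur-complement computation shows that for such $\eta$ the matrix $-\Xi=\frac12\begin{bmatrix}I_m & -\eta\\ -\eta^T & I_n\end{bmatrix}$ is positive semidefinite with kernel exactly ${\rm range}\begin{bmatrix}P\\ Q\end{bmatrix}={\rm range}(\bar Z)$, hence of rank $m+n-r$, placing $\Xi$ in the relative interior of the normal cone.

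With strict complementarity in hand, Theorem~\ref{corSDP} gives that $F$ has KL exponent $\frac12$ at $\bar Z$. Finally I would apply Corollary~\ref{convex_general}(iii): taking the anchor point $\bar u=X^s$ (for which $f(X^s)\in\R$ and $Y(X^s)$ is nonempty and compact, as the trace-bounded PSD completions form a compact set) verifies its standing hypothesis, and the KL exponent $\frac12$ of $F$ on $\{\bar X\}\times Y(\bar X)$ then transfers to $f$ at $\bar X$, completing the proof.
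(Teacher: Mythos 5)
Your proposal is correct and follows essentially the same route as the paper's proof: write $f$ as the inf-projection of the function $F$ in \eqref{defF}, verify Slater's condition and reduce the strict complementarity hypothesis $0\in{\rm ri\,}\partial F(\bar Z)$ to showing that the matrix $\frac12\begin{bmatrix}-I_m & \eta\\ \eta^T & -I_n\end{bmatrix}$ with $\eta\in{\rm ri\,}\partial\|\cdot\|_*(\bar X)$ lies in ${\rm ri\,}N_{{\cal S}^{m+n}_+}(\bar Z)$ (the paper establishes exactly this via the eigendecomposition \eqref{zeig}, which is your rank/range computation in different notation), then chain Theorem~\ref{corSDP} with Corollary~\ref{convex_general} using the fact that the minimum-trace completion is the singleton $\{(P_+\Sigma_+P_+^T,Q_+\Sigma_+Q_+^T)\}$. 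The only differences are presentational (your explicit Slater point $Z^s$ and your choice of anchor point for Corollary~\ref{convex_general}), not mathematical.
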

  \color{blue}
\begin{remark}
Similar  to Theorem~\ref{corSDP}, the ``ri-condition" here is also imposed on $f$ itself, while  such a condition is   imposed on the $F$ in \eqref{F} in Theorem \ref{SDP}.
\end{remark}
  \begin{proof}
 Let  $F$  be defined as in \eqref{defF} with  the matrix variable $Z\in {\cal S}^{n+m}$ partitioned as in \eqref{Z}. Then $f(X) = \inf_{U,V}F(Z)$, thanks to \eqref{nucnorm}. \color{black} Let $r = {\rm rank}(\bar X)$ and
  \[
  \bar X=\left[P_+\ P_0\right]\left[\begin{matrix}
    \Sigma_+ &0\\
    0           &0
  \end{matrix}\right]\left[Q_+\ Q_0\right]^T \revise{=P_+\Sigma_+Q_+^T},
  \]
  be a singular value decomposition of $\bar X$, where $\Sigma_+\in \R^{r\times r}$ is a diagonal matrix whose diagonal entries are the $r$ positive singular values of $\bar X$, $\left[P_+\ P_0\right]$ is orthogonal with $P_+\in\R^{m\times r}$ and $P_0\in\R^{m\times(m-r)}$, $\left[Q_+\ Q_0\right]$ is orthogonal with $Q_+\in\R^{n\times r}$ and $Q_0\in\R^{n\times(n-r)}$. Define\footnote{\revise{When $r=0$, we set $\bar Z = 0\in {\cal S}^{m+n}$.}}
  \[
 \bar Z:= \begin{bmatrix}
 P_+\Sigma_+P_+^T &\bar X \\
 \bar X^T &Q_+\Sigma_+Q_+^T  \end{bmatrix}.
  \]
  \revise{Then $\bar Z \succeq 0$.} Now, using \cite[Theorem~23.8]{Roc70}, the definition of $F$ and  \cite[Corollary~6.6.2]{Roc70}, we have
  \begin{align}\label{riFnuclear}
  \revise{{\rm ri\,}\partial F(\bar Z)}=\left\{\frac12\begin{bmatrix}
    I_m & \Lambda\\
    \Lambda^T & I_n
  \end{bmatrix} +Y: \Lambda\in\revise{{\rm ri\,}\partial \left(\sum_{k=1}^pf_k\right)}(\bar X) {\rm \ and \ }Y\in \revise{{\rm ri\,}N_{\mathcal{S}_+^{m+n}}(\bar Z)}\right\}.
  \end{align}

  Next, since $0\in\revise{{\rm ri\,}\partial f(\bar X)}$ \revise{and the nuclear norm is continuous}, we see from \cite[Theorem~23.8]{Roc70} and  \cite[Corollary~6.6.2]{Roc70} that
    \begin{equation}\label{rinuclear}
    0\in\revise{{\rm ri\,}\partial f(\bar X)}=\revise{{\rm ri\,}\partial\left(\sum_{k=1}^pf_k\right)}(\bar X)+\revise{{\rm ri\,}\partial\|\bar X\|_*}.
    \end{equation}
  Moreover, recall from \cite[Example~2]{Wa92} and \cite[Corollary~7.6.1]{Roc70} that
  \begin{align}\label{0ri+ri}
  \revise{{\rm ri\,}\partial\|\bar X\|_*}=\left\{\left[P_+\ P_0\right]\left[\begin{matrix}
    I_r &0\\
    0           &W
  \end{matrix}\right]\left[Q_+\ Q_0\right]^T:\;W\in\R^{(m-r)\times(n-r)}, \|W\|_2<1\right\},
  \end{align}
  where $\|W\|_2$ is the operator norm of $W$, that is, the largest singular value of $W$.
  Combining \eqref{rinuclear} and \eqref{0ri+ri}, we conclude that there \reviser{exist} $C\in\revise{{\rm ri\,}\partial \left(\sum_{k=1}^pf_k\right)}(\bar X)$ and $W_0$ with $\|W_0\|_2<1$ such that
  \begin{align}\label{C_relation}
    0=C+\left[P_+\ P_0\right]\begin{bmatrix}
    I_r &0\\
    0           &W_0
  \end{bmatrix}\left[Q_+\ Q_0\right]^T=C+P_0W_0Q_0^T+P_+Q^T_+.
  \end{align}

  On the other hand, using the definition of $\bar Z$ and a direct computation, we have
  \begin{equation}\label{zeig}
  \bar Z = \underbrace{\begin{bmatrix}
  \frac{1}{\sqrt{2}}P_+ &P_0 &0    &\frac{1}{\sqrt{2}}P_+\\
  \frac{1}{\sqrt{2}}Q_+ &0   &Q_0  &-\frac{1}{\sqrt{2}}Q_+
 \end{bmatrix}}_{\widehat P}
 \begin{bmatrix}
   2\Sigma_+ & 0 &0 & 0\\
   0 & 0  &  0& 0   \\
   0 &  0&0 &   0     \\
    0 &  0 & 0 &   0
 \end{bmatrix}
  \begin{bmatrix}
  \frac{1}{\sqrt{2}}P_+ &P_0 &0 &\frac{1}{\sqrt{2}}P_+\\
  \frac{1}{\sqrt{2}}Q_+ &0   &Q_0  &-\frac{1}{\sqrt{2}}Q_+
 \end{bmatrix}^T.
  \end{equation}
  Note that $\widehat P^T\widehat P = \widehat P\widehat P^T \reviser{= I_{m+n}}$, meaning that \eqref{zeig} is an eigenvalue decomposition of $\bar Z$.
  Thus, we can compute that
  \[
  \begin{aligned}
    &\revise{{\rm ri\,}N_{\mathcal{S}_+^{m+n}}(\bar Z)}={\rm ri}\left[(-\mathcal{S}_+^{m+n})\cap\left\{\bar Z\right\}^\perp\right]=\widehat P
 \begin{bmatrix}
   0&0\\
   0&-\revise{{\rm int\,}\mathcal{S}_{+}^{m+n-r}}
 \end{bmatrix}
  \widehat P^T\\&\ni
   \begin{bmatrix}
   \frac{1}{\sqrt{2}}P_+ &P_0 &0 &\frac{1}{\sqrt{2}}P_+\\
   \frac{1}{\sqrt{2}}Q_+ &0   &Q_0  &-\frac{1}{\sqrt{2}}Q_+
 \end{bmatrix}
 \begin{bmatrix}
   0 & 0 &0 & 0\\
   0 & -\frac12 \reviser{I_{m-r}}  & \frac12W_0& 0   \\
   0 &  \frac12W_0^T &-\frac12 \reviser{I_{n-r}}  &   0     \\
    0 &  0 & 0 &   - \reviser{I_r}
 \end{bmatrix}
   \begin{bmatrix}
   \frac{1}{\sqrt{2}}P_+^T & \frac{1}{\sqrt{2}}Q_+^T\\
   P_0^T & 0\\
   0 & Q_0^T\\
   \frac{1}{\sqrt{2}}P_+^T & -\frac{1}{\sqrt{2}}Q_+^T
 \end{bmatrix}\\
 & =\frac12\begin{bmatrix}
   -I_m & -C\\
   -C^T  & -I_n
 \end{bmatrix},
  \end{aligned}
  \]
  where the inclusion holds because $\|W_0\|_2<1$, and the last equality follows from \eqref{C_relation} and a direct computation.
 This together with \eqref{riFnuclear} and the definition of $C$ implies  that $0\in\revise{{\rm ri\,}\partial F(\bar Z)}$. Moreover, one can see that $F$ is the sum of $p+1$ \revise{proper closed} LMI-representable functions and the Slater's condition required in Theorem~\ref{corSDP} holds.
 Thus, we conclude from Theorem~\ref{corSDP} that $F$ in \eqref{defF} has KL property at
  $\bar Z$ with exponent $\frac12$.

Finally, recall that
for the $F$ defined in \eqref{defF}, we have
 \[
 \inf_{U,V} F(Z)=f(X) {\rm \ and\ }\Argmin_{U,V} F\left(\begin{bmatrix}
   U&\bar X\\
   \bar X^T& V
 \end{bmatrix}\right)=\left\{(P_+\Sigma_+P_+^T ,Q_+\Sigma_+Q_+^T)\right\}.\footnote{\revise{When $r = 0$, this set is $\{(0,0)\}$ and $\bar Z = 0$.}}
 \]
\revise{These} together with Corollary~\ref{convex_general} and the fact that the KL exponent of $F$ at $\bar Z$ is $\frac12$ shows that $f$ satisfies the KL property at $\bar X$ with exponent $\frac12$.
 \end{proof}\color{black}

\begin{remark}\label{appLMInu}
In \cite[Proposition~12]{ZhSo2017}, it was shown that if $\ell:\R^p\to \R$ is strongly convex on any compact convex set with locally Lipschitz gradient and ${\cal A}:\R^{m\times n}\to \R^p$ is a linear map, then the function
\[
f(X) = \ell({\cal A}X) + \|X\|_*
\]
satisfies the KL property with exponent $\frac12$ at any $\bar X$ that verifies $0 \in \revise{{\rm ri\,}\partial f(\bar X)}$. In particular, the loss function $X\mapsto \ell({\cal A}X)$ is smooth. The more general case where the nuclear norm is replaced by a general spectral function was considered in \cite[Theorem~3.12]{Cui_Ding_Zhao}, and a sufficient condition involving the relative interior of {\em the subdifferential of the conjugate of the spectral function} was proposed in \cite[Proposition~3.13]{Cui_Ding_Zhao}, which, in general, is different from the regularity condition $0 \in \revise{{\rm ri\,}\partial f(\bar X)}$.

On the other hand, using our Theorem~\ref{nuclearLMI}, we can deduce the KL exponent of functions in the form of \eqref{fnuclear} at points $\bar X$ satisfying the condition $0 \in \revise{{\rm ri\,}\partial f(\bar X)}$, but with a different set of conditions on the loss function. For instance, one can prove using Theorem~\ref{nuclearLMI} that the following functions $f$ satisfy the KL property with exponent $\frac12$ at a point $\bar X$ verifying $0 \in \revise{{\rm ri\,}\partial f(\bar X)}$:
\begin{enumerate}[{\rm (i)}]
\item $f(X) = \frac12\|{\cal A}X - b\|^2 + \mu \sum_{i,j}|X_{ij}| + \nu \|X\|_*$, where $\mu>0$ and $\nu > 0$, $b\in \R^p$ and ${\cal A}:\R^{m\times n}\to \R^p$ is a linear map.
  \item $f(X) = \|{\cal A}X - b\| + \mu \sum_{i,j}|X_{ij}| + \nu \|X\|_*$, where $\mu>0$ and $\nu > 0$, $b\in \R^p$ and ${\cal A}:\R^{m\times n}\to \R^p$ is a linear map.
\end{enumerate}

In view of \cite[Theorem~3.12]{Cui_Ding_Zhao}, it would be of interest to extend Theorem~\ref{nuclearLMI} to cover more general spectral functions. However, since our analysis in this subsection is based on LMI or SDP representability, it is not clear how this can be achieved at this moment. This would be a potential important future research direction.
\end{remark}

\color{blue}
\begin{remark}[{{\bf Discussion of the relative interior conditions}}]\label{newremark4.4}
In Theorems~\ref{SDP}, \ref{corSDP} and \ref{nuclearLMI}, the conclusions of KL exponent being $1/2$ were derived under relative interior conditions. If these relative interior  conditions were dropped, then the corresponding conclusions could fail, in general. For example, in \cite[equation (53)]{ZhSo2017}, the authors provided an example of $\widetilde f(X):=f_1(X)+\|X\|_*$ for $X \in \mathbb{R}^{2 \times 2}$, where $f_1$ is a convex quadratic function on $\mathbb{R}^{2 \times 2}$, and showed that $0 \notin {\rm ri} \, \partial  \widetilde f(\overline{X})$ for some $\overline{X} \in \mathbb{R}^{2 \times 2}$ and the first-order error bound is not satisfied at $\overline{X}$. Recalling \cite[Theorem~5]{BoNgPeSu17} and \cite[Corollary~3.6]{DrLewis18}, this means that $\widetilde f$ cannot have a KL exponent of $\frac12$ at $\overline{X}$.

We also would like to point out that, when the relative interior condition fails, one can follow the approach in Section~\ref{sec4.1} and the general error bound result for ill-posed semidefinite programs \cite{Sturm00,DrLiWo17} to derive a KL exponent that depends on the degree of singularity of a certain semidefinite system in the lifted representation. In general, this KL exponent will approach $1$ quickly as the dimension grows, which can be of less interest. For simplicity, we do not discuss this in detail.
\end{remark}
\color{black}

\subsection{Convex models with $C^2$-cone reducible structure}\label{sec4.2}

In this section, we explore the KL exponent of functions that involve $C^2$-cone reducible structures.
Our first theorem concerns the sum of the {\em support} function of a $C^2$-cone reducible closed convex set and a specially structured smooth convex function. In the theorem, we will also make use of the so-called bounded linear regularity condition \cite[Definition~5.6]{BaBo96}. Recall that $\{\D_1,\D_2\}$ is said to be boundedly linearly regular at $\bar x \in \D_1 \cap \D_2$ if for any bounded neighborhood $\mathfrak{U}$ of $\bar x$, there exists $c>0$ such that
\[
\dist(x,\D_1 \cap \D_2) \le c [\dist(x,\D_1)+\dist(x,\D_2)] \mbox{ for all }
 x \in \mathfrak{U}.
\]
It is known that if $\D_1$ and $\D_2$ are both polyhedral, then $\{\D_1,\D_2\}$ is boundedly linearly regular at any $\bar x \in \D_1\cap \D_2$; moreover, if $\D_1$ is polyhedral and $\D_1 \cap \revise{{\rm ri\,}\D_2} \neq \emptyset$, then $\{\D_1,\D_2\}$ is also boundedly \revise{linearly} regular
at any $\bar x \in \D_1 \cap \D_2$; see \cite[Corollary~3]{BaBoLi99}.

\begin{theorem}[{{\bf Composite convex models with $C^2$-cone reducible structure}}]\label{thmC2reduce}
Let $\ell: \mathbb{Y}\to {\R}$ be a function that is strongly convex on any compact convex set
and has locally Lipschitz gradient, $\mathcal{A}:\mathbb{X} \to \mathbb{Y}$ be a linear map, and $v\in \mathbb{X}$. Consider the function
\[
h(x):=\ell(\mathcal{A}x)+\langle v, x \rangle + \sigma_{\frak D}(x)
\]
with ${\frak D}$ being a \revise{nonempty} $C^2$-cone reducible closed convex set.
Suppose $0\in\partial h(\bar x)$. Then, one has $$\bar x \in N_{\frak D}(-{\cal A}^*\nabla \ell(\mathcal{A} \bar x)-v).$$ If we assume in addition that $\{\mathcal{A}^{-1}\{{\cal A} \bar x\}, \, N_{\frak D}(-{\cal A}^*\nabla \ell(\mathcal{A} \bar x)-v)\}$ is boundedly linearly regular at $\bar x$, then $h$ satisfies the KL property at $\bar x$ with exponent $\frac12$.
\end{theorem}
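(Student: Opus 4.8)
The plan is to prove the two assertions separately: the inclusion $\bar x\in N_{\frak D}(-\mathcal{A}^*\nabla\ell(\mathcal{A}\bar x)-v)$ follows purely from conjugacy, while the KL exponent $\frac12$ will come from establishing a second-order (quadratic) growth condition for the convex function $h$ at its minimizer $\bar x$, which yields the exponent $\frac12$ via the equivalence between second-order growth and KL exponent $\frac12$ for convex functions (see the discussion in the introduction and \cite[Theorem~5]{BoNgPeSu17}). For the first assertion, since $\ell\circ\mathcal{A}+\langle v,\cdot\rangle$ is differentiable, $0\in\partial h(\bar x)$ gives $w:=-\mathcal{A}^*\nabla\ell(\mathcal{A}\bar x)-v\in\partial\sigma_{\frak D}(\bar x)$. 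Using $\sigma_{\frak D}=\delta_{\frak D}^*$ and the subgradient inversion \eqref{Young}, this is equivalent to $\bar x\in\partial\delta_{\frak D}(w)=N_{\frak D}(w)$, which proves the inclusion; in particular $w\in{\frak D}$ and $\sigma_{\frak D}(\bar x)=\langle w,\bar x\rangle$.

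For the growth condition, let $\mu>0$ be a modulus of strong convexity of $\ell$ on a compact convex neighborhood of $\mathcal{A}\bar x$. Applying the strong-convexity lower bound for $\ell$ at $\mathcal{A}\bar x$ and using the identity $\mathcal{A}^*\nabla\ell(\mathcal{A}\bar x)+v=-w$ together with $\sigma_{\frak D}(\bar x)=\langle w,\bar x\rangle$, a direct computation shows that the linear terms regroup into $\psi(x):=\sigma_{\frak D}(x)-\langle w,x\rangle$, so that
\[
h(x)-h(\bar x)\ge \tfrac{\mu}{2}\|\mathcal{A}(x-\bar x)\|^2+\psi(x)
\]
for $x$ near $\bar x$, where both terms on the right are nonnegative and the zero set of $\psi$ is exactly $N_{\frak D}(w)$. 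One checks directly that $\mathcal{A}^{-1}\{\mathcal{A}\bar x\}\cap N_{\frak D}(w)\subseteq\Argmin h$: if $\mathcal{A}x=\mathcal{A}\bar x$ then $\mathcal{A}^*\nabla\ell(\mathcal{A}x)+v=-w$, and if moreover $x\in N_{\frak D}(w)$ then $w\in\partial\sigma_{\frak D}(x)$, so $0\in\partial h(x)$.

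Consequently, the boundedly linear regularity of $\{\mathcal{A}^{-1}\{\mathcal{A}\bar x\},\,N_{\frak D}(w)\}$ at $\bar x$ gives, for $x$ near $\bar x$,
\[
\dist(x,\Argmin h)\le\dist\big(x,\mathcal{A}^{-1}\{\mathcal{A}\bar x\}\cap N_{\frak D}(w)\big)\le c\big[\dist(x,\mathcal{A}^{-1}\{\mathcal{A}\bar x\})+\dist(x,N_{\frak D}(w))\big].
\]
The first distance is bounded by $\|\mathcal{A}(x-\bar x)\|$ through the Hoffman bound for the affine subspace $\mathcal{A}^{-1}\{\mathcal{A}\bar x\}$, which is in turn controlled by $h(x)-h(\bar x)$ via the display above. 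Thus, combining everything, the full growth estimate $h(x)-h(\bar x)\ge c'\dist(x,\Argmin h)^2$ reduces to the single local error bound $\dist(x,N_{\frak D}(w))^2\le c''\,\psi(x)$ for $x$ near $\bar x$.

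The main obstacle is precisely this error bound, and it is where the $C^2$-cone reducibility of ${\frak D}$ at $w$ is essential. The plan is first to localize: since $\psi(x)\ge\sup_{z\in{\frak D}\cap B(w,\rho)}\langle x,z-w\rangle$ and normal cones are local, it suffices to prove the bound with $\psi$ replaced by this localized support function, which only sees ${\frak D}$ near $w$. Using the reduction ${\frak D}\cap B(w,\rho)=\{z:\Theta(z)\in K\}\cap B(w,\rho)$ with $D\Theta(w)$ onto, the second-order Taylor expansion of $\Theta$ at $w$, and the chain rule $N_{\frak D}(w)=\nabla\Theta(w)K^\circ$, I would construct, for each $x$ near $\bar x$, a feasible point $z\in{\frak D}$ close to $w$ (obtained by moving from $w$ along a direction built from the component of $x$ normal to $N_{\frak D}(w)$, with the curvature of $\Theta$ absorbed at second order so as to keep $\Theta(z)\in K$) that certifies $\langle x,z-w\rangle\ge c_3\,\dist(x,N_{\frak D}(w))^2$. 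This second-order feasibility construction, which relies on the $C^2$ (rather than merely $C^1$) smoothness of $\Theta$ and on the conic structure $K$, is the delicate step; the remaining pieces are standard.
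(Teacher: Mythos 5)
Your overall architecture is sound and, up to the key technical step, coincides with the paper's: the first assertion via conjugacy is exactly the paper's argument; the reduction of the KL exponent $\frac12$ to the single local error bound
\[
\dist\bigl(x,N_{\frak D}(\bar w)\bigr)^2\le c''\,\bigl(\sigma_{\frak D}(x)-\sigma_{\frak D}(\bar x)-\langle \bar w,x-\bar x\rangle\bigr),\qquad \bar w:=-\mathcal{A}^*\nabla\ell(\mathcal{A}\bar x)-v,
\]
via strong convexity of $\ell$, the Hoffman bound for $\mathcal{A}^{-1}\{\mathcal{A}\bar x\}$, bounded linear regularity, the inclusion $\mathcal{A}^{-1}\{\mathcal{A}\bar x\}\cap N_{\frak D}(\bar w)\subseteq\Argmin h$, and \cite[Theorem~5]{BoNgPeSu17} is precisely the paper's route. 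The problem is that the one step you yourself flag as ``the delicate step'' --- and the only place where $C^2$-cone reducibility enters --- is left as a plan rather than a proof. Your proposed primal construction (produce $z(t)=\bar w+tu+O(t^2)\in{\frak D}$ with $u=P_{T_{\frak D}(\bar w)}(x)/\|P_{T_{\frak D}(\bar w)}(x)\|$, so that $\langle x,z(t)-\bar w\rangle\ge t\,\dist(x,N_{\frak D}(\bar w))-Ct^2\|x\|$ and then optimize in $t$) is viable in principle, but it hides a genuine difficulty: writing $\Theta(\bar w+tu)=tD\Theta(\bar w)u+r(t)$ with $\|r(t)\|=O(t^2)$, the point $tD\Theta(\bar w)u$ typically lies on the \emph{boundary} of $K$, so the remainder $r(t)$ can push $\Theta(\bar w+tu)$ out of $K$; one must correct by a Lyusternik--Graves type argument (using surjectivity of $D\Theta(\bar w)$) to find $z(t)$ with $\Theta(z(t))=tD\Theta(\bar w)u$ exactly and $\|z(t)-\bar w-tu\|=O(t^2)$, and the $O(t^2)$ constant must be uniform over all unit directions $u\in T_{\frak D}(\bar w)$. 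None of this is carried out, so as written the proof is incomplete at its crux.

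For comparison, the paper obtains the same error bound by a dual route that avoids any feasible-point construction: using the local representation $N_{\frak D}(w)=D\Theta(w)^*N_K(\Theta(w))$ and the Lipschitz continuity of $D\Theta$ and of the pseudo-inverse map $z\mapsto(D\Theta(z)D\Theta(z)^*)^{-1}D\Theta(z)$, it shows that the set-valued map $w\mapsto N_{\frak D}(w)$ is \emph{calm} at $\bar w$ for $\bar x$; by \cite[Theorem~3H.3]{DonRoc09} this gives metric subregularity of $(N_{\frak D})^{-1}=\partial\sigma_{\frak D}$ at $\bar x$ for $\bar w$, and \cite[Theorem~3.3]{ArtachoGeo08} then converts metric subregularity of the subdifferential of the convex function $\sigma_{\frak D}$ into exactly the quadratic growth estimate above. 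If you want to keep your primal approach, you need to supply the Lyusternik-type second-order feasibility lemma with uniform constants; otherwise the cleaner path is the calmness/metric-subregularity argument.
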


\begin{proof}
Since $0 \in \partial h(\bar x)$, we see from \cite[Exercise~8.8]{RocWets98} that
\[
\bar w:=-\mathcal{A}^* \nabla \ell (\mathcal{A}\bar x)- v \in \partial \sigma_{\frak D}(\bar x)=\partial \delta_{\frak D}^*(\bar x)=(\partial \delta_{\frak D})^{-1}(\bar x),
\]
where the last equality follows from \cite[Proposition~11.3]{RocWets98}.
This implies  $\bar x \in \partial \delta_{\frak D} (\bar w)=N_{\frak D}(\bar w)$.

We now assume in addition the bounded linear regularity condition and prove the alleged KL property. First, since ${\frak D}$ is a $C^2$-cone reducible closed convex set, there exists $\tilde\rho>0$ and a mapping $\Theta:\mathbb{X} \rightarrow \mathbb{V}$ which is twice continuously differentiable on $B(\bar w, \tilde\rho)$ and a closed convex pointed cone $K \subseteq \mathbb{V}$ such that $\Theta(\bar w)=0$, $D \Theta(\bar w)$ is onto and
${\frak D} \cap B(\bar w, \tilde\rho) = \{w: \Theta(w) \in K\} \cap B(\bar w, \tilde\rho)$.

Fix any $\rho \in (0,\tilde \rho)$ so that $D\Theta(w)$ is onto whenever $w\in B(\bar w,\rho)$.
Then, we have from \cite[Exercise~10.7]{RocWets98} that
\begin{equation}\label{conerel}
N_{\frak D}(w)= D \Theta(w)^* N_K(\Theta(w))\ \ \mbox{ for all } w \in B(\bar w, \rho).
\end{equation}
Now, fix any $\delta>0$. Take $w \in {\frak D} \cap B(\bar w, \rho)$ and $x \in N_{\frak D}(w) \cap B(\bar x,\delta)$.
Then $x=D \Theta(w)^*u_x$ for some $u_x \in N_K(\Theta(w))$ according to \eqref{conerel}. For such a $u_x$, one can observe that
\[
D \Theta(\bar w)^* u_x \in D \Theta(\bar w)^* N_K(\Theta(w)) \subseteq D \Theta(\bar w)^* K^{\circ}=D \Theta(\bar w)^* N_K(\Theta(\bar w))=N_{\frak D}(\bar w),
\]
where $K^\circ$ is the polar of $K$, the set inclusion follows from the definition of normal cone and the fact that $K$ is a closed convex cone, the first equality holds because $\Theta(\bar w) = 0$ and the last equality follows from \eqref{conerel}. Thus, for any $w \in {\frak D} \cap B(\bar w, \rho)$ and $x \in N_{\frak D}(w) \cap B(\bar x,\delta)$, we have
\begin{equation}\label{eq:use1}
\dist(x, N_{\frak D}(\bar w)) \le \|x-D \Theta(\bar w)^* u_x\| = \|D \Theta(w)^*  u_x-D \Theta(\bar w)^* u_x\| \le  L \|u_x\| \|w-\bar w\|,
\end{equation}
where $L$ is the Lipschitz continuity modulus of $D\Theta$ over the set $B(\bar w, \rho)$, which is finite because $\Theta$ is twice continuously differentiable.

Next, for each $z\in B(\bar w, \rho)$, define the linear map
\[
\mathcal{W}(z)=\big(D \Theta(z) D \Theta(z)^*\big)^{-1}D \Theta(z).
\]
Then $\mathcal{W}$ is continuously differentiable on $B(\bar w, \rho)$ because $\Theta$ is twice continuously differentiable on $B(\bar w, \rho)$ with surjective gradient map. Moreover, for any $w \in {\frak D} \cap B(\bar w, \rho)$ and $x \in N_{\frak D}(w) \cap B(\bar x,\delta)$, it follows from the definition of $u_x$ that $[\mathcal{W}(w)](x) = u_x$.
Let $M$ be the Lipschitz continuity modulus of $w\mapsto \mathcal{W}(w)$ on $B(\bar w, \rho)$, which is finite because $\mathcal{W}$ is continuously differentiable on $B(\bar w, \rho)$. Then we have for any $w \in {\frak D} \cap B(\bar w, \rho)$ and $x \in N_{\frak D}(w) \cap B(\bar x,\delta)$ that
\[
\begin{aligned}
&\|u_x - u_{\bar x}\|  =  \|[\mathcal{W}(w)](x)-[\mathcal{W}(\bar w)](\bar x)\| \\
&\le  \|[\mathcal{W}(w)](x)-[\mathcal{W}(\bar w)](x)\|+ \|[\mathcal{W}(\bar w)](x)-[\mathcal{W}(\bar w)](\bar x)\| \\
& \le  M \|x\| \, \|w-\bar w\| + \|\mathcal{W}(\bar w)\| \|x-\bar x\|\\
& \le  M \rho (\|\bar x\|+\|x-\bar x\|)  + \|\mathcal{W}(\bar w)\| \|x-\bar x\|,
\end{aligned}
\]
where the last inequality follows from triangle inequality and the fact that $w\in B(\bar w, \rho)$.
In particular, $\|u_x\| \le \|u_{\bar x}\| + M \rho(\|\bar x\|+\delta) + \|\mathcal{W}(\bar w)\| \delta=:\kappa$.
This together with \eqref{eq:use1} implies that
\[
N_{\frak D}(w)\cap B(\bar x,\delta) \subseteq  N_{\frak D}(\bar w) + \kappa L \, \|w-\bar w\| B(0,1)\ \ \mbox{ for all }  w\in B(\bar w, \rho).
\]
This means that the mapping \revise{$w \mapsto N_{\frak D}(w)$} is calm at $\bar w$ with respect to $\bar x$; see \cite[Page~182]{DonRoc09}. Thus, according to \cite[Theorem~3H.3]{DonRoc09}, the mapping \revise{$x \mapsto (N_{\frak D})^{-1}(x)$} is metrically subregular at $\bar x$ with respect to $\bar w$; see \cite[Page~183]{DonRoc09} for the definition. Noting also that $\partial \sigma_{\frak D} = (N_{\frak D})^{-1}$ according to \cite[Example~11.4]{RocWets98}, we then deduce from \cite[Theorem~3.3]{ArtachoGeo08} that there exist $\delta'\in (0,\delta)$ and $c_0>0$ such that
\begin{equation}\label{eq:metric_sub_sigma}
\sigma_{\frak D}(x)-\sigma_{\frak D}(\bar x)-\langle \bar w, x-\bar x\rangle \ge c_0 \, \dist( x, (\partial \sigma_{\frak D})^{-1}(\bar w))^2=c_0 \, \dist(x, N_{\frak D}(\bar w))^2
\end{equation}
whenever $\|x-\bar x\|\le \delta'$. \reviser{We now follow a similar line of argument used in \cite[Theorem 2]{ZhSo2017} and \cite[Theorem 4.2]{DrLewis18} to show the desired conclusion.} Observe that
\[
\reviser{\begin{aligned}
&\Argmin h=\{z: 0 \in \partial h(z)\} \\
& =  \{z: \mathcal{A}z=\mathcal{A} \bar x \mbox{ and } -\mathcal{A}^* \nabla \ell (\mathcal{A} z)- v \in (N_{\frak D})^{-1}(z)\} \\
%& =  \{z: \mathcal{A}z=\mathcal{A} \bar x \mbox{ and } -\mathcal{A}^* \nabla \ell (\mathcal{A} \bar x)- v \in (N_{\frak D})^{-1}(z)\} \\
& =  \{z: \mathcal{A}z =\mathcal{A} \bar x \mbox{ and } z \in N_{\frak D}(-\mathcal{A}^* \nabla \ell (\mathcal{A} \bar x)- v)\}.
\end{aligned}}
\]
Then it follows that for any bounded convex neighborhood $\mathfrak{U}$ of $\bar x$ with $\mathfrak{U}\subseteq B(\bar x,\delta')$, there exists $c_1>0$ such that for any $z \in \mathfrak{U}$,
\begin{equation}\label{ebineq}
\begin{aligned}
&\dist(z,\Argmin h) = \dist(z, \mathcal{A}^{-1}\{{\cal A}\bar x\} \cap N_{\frak D}(\bar w)) \\
& \overset{\rm (a)}\le \alpha [\dist(z, \mathcal{A}^{-1}\{{\cal A}\bar x\})+ \dist(z,N_{\frak D}(\bar w))] \\
& \overset{\rm (b)}\le \alpha [c_1 \, \|{\cal A}\bar x- \mathcal{A}z\| + \dist(z,N_{\frak D}(\bar w))] \\
& \overset{\rm (c)}\le \alpha \left[c_1 \, \|{\cal A}\bar x- \mathcal{A}z\|+ c_0^{-\frac{1}{2}} \,\sqrt{\sigma_{\frak D}(z)-\sigma_{\frak D}(\bar x)-\langle \bar w, z-\bar x\rangle}\right];
\end{aligned}
\end{equation}
here, (a) holds for some $\alpha > 0$ because of the bounded linear regularity assumption, (b) holds for some $c_1 > 0$ thanks to the Hoffman error bound, and (c) follows from \eqref{eq:metric_sub_sigma}.
Now, as $\ell$ is strongly convex on compact convex sets, there exists $\beta>0$ such that for all $z \in \mathfrak{U}$, we have
\[
\beta \|\mathcal{A} \bar x - \mathcal{A}z\|^2 \le \ell(\mathcal{A}z)-\ell(\mathcal{A}\bar x)- \langle \mathcal{A}^*\nabla \ell(\mathcal{A}\bar x), z-\bar x\rangle.
\]
Combining this with \eqref{ebineq}, we have for any $z \in \mathfrak{U}$ that
\[
\begin{aligned}
&\dist(z,\Argmin h)  \le   \alpha \bigg( c_1 \, \|{\cal A}\bar x- \mathcal{A}z\|+ c_0^{-\frac{1}{2}} \,\sqrt{\sigma_{\frak D}(z)-\sigma_{\frak D}(\bar x)-\langle \bar w, z-\bar x\rangle}  \bigg) \\
& \le  \alpha \bigg( c_1 \, \beta^{-\frac{1}{2}} \sqrt{\ell(\mathcal{A}z)-\ell(\mathcal{A}\bar x)- \langle \mathcal{A}^*\nabla \ell(\mathcal{A}\bar x), z-\bar x\rangle}+ c_0^{-\frac{1}{2}} \,\sqrt{\sigma_{\frak D}(z)-\sigma_{\frak D}(\bar x)-\langle \bar w, z-\bar x\rangle}  \bigg)
\end{aligned}
\]
Note that $\sqrt{a}+\sqrt{b} \le \sqrt{2} \sqrt{a+b}$ for $a,b \ge 0$, and
\[
h(z)-h(\bar x)=\ell(\mathcal{A}z)-\ell(\mathcal{A}\bar x)- \langle \mathcal{A}^*\nabla \ell(\mathcal{A}\bar x), z-\bar x\rangle + \sigma_{\frak D}(z)-\sigma_{\frak D}(\bar x)-\langle \bar w, z-\bar x\rangle.
\]
Thus, there exists $c>0$ such that for all $z \in \mathfrak{U}$,
$\dist(z,\Argmin h)  \le c \, \sqrt{h(z)-h(\bar x)}$.
Combining this with \cite[Theorem~5]{BoNgPeSu17}, we conclude that $h$ satisfies the KL property at $\bar x$ with exponent $\frac12$.
\end{proof}

%\reviser{In passing, we note that the boundedly linear regularity assumption in Theorem 4.4 holds when $\mathcal{A}^{-1}\{{\cal A}\bar X\} \cap  {\rm ri} \,   \left(N_{\frak D}(-\mathcal{A}^*\nabla \ell(\mathcal{A} \bar X)-V)\right) \neq \emptyset$ or $N_{\frak D}(-{\cal A}^*\nabla \ell(\mathcal{A} \bar x)-v)$ is a polyhedral set. The latter condition is automatically true for example when $\frak D$ is a polyhedral or a ball.}

As a corollary of the preceding theorem, we consider the KL exponent of a class of gauge regularized optimization problems. Recall that a convex function $\gamma : \mathbb{X} \rightarrow  \revise{\R\cup\{\infty\}}$ is called a gauge if it is nonnegative, positively homogeneous, and vanishes at the origin. It is clear that any norm is a gauge. In the next corollary, we make explicit use of the gauge structure and replace the relative interior condition in Theorem~\ref{thmC2reduce} by one involving the so-called polar gauge.
Recall from \cite[Proposition~2.1(iii)]{FriMaPong14} that for a gauge $\gamma$, its polar can be given by $\gamma^{\circ}(x)=\sup_z\{\langle x, z\rangle: \gamma(z) \le 1\}$; moreover, polar of norms are their corresponding dual norms.

\begin{corollary}\label{C2gauge}
\revise{Let $f$ be defined as in \eqref{C2h}. }
%Let $\gamma$ be a closed gauge whose polar $\gamma^{\circ}$ is $C^2$-cone reducible. Let $\ell:\mathbb{Y}\to \R$ be a function that is strongly convex on any compact convex set
%and has locally Lipschitz gradient, $\mathcal{A}:\mathbb{X} \to \mathbb{Y}$ be a linear map, and $v\in \mathbb{X}$. Consider the function
%\[
%h(x):=\ell(\mathcal{\mathcal{A}} x)+\langle v,x \rangle+ \gamma(x).
%\]
Suppose that $0\in\partial \revise{f}(\bar x)$ and $\gamma(\bar x) > 0$. Then $\gamma^\circ(-\mathcal{A}^*\nabla \ell(\mathcal{A} \bar x)-v)=1$. Suppose in addition that $-\mathcal{A}^*\nabla \ell(\mathcal{A} \bar x)-v\in \revise{{\rm dom\,} \partial \gamma^\circ}$ and the following relative interior condition holds:
\begin{equation}\label{eq:regularity1}
\revise{ \mathcal{A}^{-1}\{{\cal A}\bar x\} \cap \left( \bigcup_{\lambda > 0} \lambda\left( {\rm ri\,}\partial \gamma^{\circ}(-\mathcal{A}^* \nabla \ell (\mathcal{A}\bar x)- v )\right)\right) \neq \emptyset.}
\end{equation}
Then $\revise{f}$ satisfies the KL property at $\bar x$ with exponent $\frac12$.
\end{corollary}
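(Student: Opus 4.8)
The plan is to recast $f$ so that Theorem~\ref{thmC2reduce} applies, by realizing the gauge as a support function. Set $\bar w:=-\mathcal{A}^*\nabla\ell(\mathcal{A}\bar x)-v$ and let $\mathfrak{B}:=\{z:\gamma^\circ(z)\le1\}$ be the unit ball of the polar gauge, so that $\gamma=\sigma_{\mathfrak{B}}$ and $f=\ell(\mathcal{A}\cdot)+\langle v,\cdot\rangle+\sigma_{\mathfrak{B}}$. For the first assertion, I would use the convex sum rule to rewrite $0\in\partial f(\bar x)$ as $\bar w\in\partial\gamma(\bar x)=\partial\sigma_{\mathfrak{B}}(\bar x)$; by \cite[Example~11.4]{RocWets98} this means $\bar w\in\mathfrak{B}$ (i.e. $\gamma^\circ(\bar w)\le1$), $\langle\bar w,\bar x\rangle=\gamma(\bar x)$, and $\bar x\in N_{\mathfrak{B}}(\bar w)$. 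Combining $\langle\bar w,\bar x\rangle=\gamma(\bar x)$ with the polar inequality $\langle\bar w,\bar x\rangle\le\gamma(\bar x)\gamma^\circ(\bar w)$ (see \cite{FriMaPong14}) and $\gamma(\bar x)>0$ forces $\gamma^\circ(\bar w)=1$, which is the first conclusion and places $\bar w$ on the boundary of $\mathfrak{B}$.

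The main obstacle is to upgrade the hypothesis that ${\rm epi\,}\gamma^\circ$ is $C^2$-cone reducible to the statement that $\mathfrak{B}$ is $C^2$-cone reducible at $\bar w$, as required by Theorem~\ref{thmC2reduce}. Since $\gamma^\circ(\bar w)=1$, the point $(\bar w,1)$ lies on the boundary of the cone ${\rm epi\,}\gamma^\circ$, so there exist $\rho>0$, a closed convex pointed cone $K\subseteq\mathbb{V}$ and a $C^2$ map $\Theta$ with $\Theta(\bar w,1)=0$ and $D\Theta(\bar w,1)$ onto such that ${\rm epi\,}\gamma^\circ$ coincides with $\{(z,t):\Theta(z,t)\in K\}$ near $(\bar w,1)$. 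Because $(z,1)\in{\rm epi\,}\gamma^\circ$ is equivalent to $z\in\mathfrak{B}$, the candidate reduction of $\mathfrak{B}$ at $\bar w$ is $(\Psi,K)$ with $\Psi(z):=\Theta(z,1)$; everything is immediate except surjectivity of $D\Psi(\bar w)=D_z\Theta(\bar w,1)$. I would obtain this from positive homogeneity: the radial curve $s\mapsto(s\bar w,s)$ stays on the boundary of ${\rm epi\,}\gamma^\circ$ (as $\gamma^\circ(s\bar w)=s$), so $s\mapsto\Theta(s\bar w,s)$ is a $C^2$ curve lying in the pointed cone $K$ and vanishing at $s=1$; taking one-sided limits of $\frac{1}{|s-1|}\Theta(s\bar w,s)\in K$ from both sides of $s=1$ forces its velocity $d:=D\Theta(\bar w,1)(\bar w,1)$ into $K\cap(-K)=\{0\}$, so $d=0$. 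Then $D\Theta(\bar w,1)(0,1)=-D\Theta(\bar w,1)(\bar w,0)\in{\rm Range}\,D_z\Theta(\bar w,1)$, and since $D\Theta(\bar w,1)$ is onto, $D_z\Theta(\bar w,1)$ is onto as well; hence $\mathfrak{B}$ is $C^2$-cone reducible at $\bar w$ (the same slicing works at every boundary point, while interior points are trivial, so only reducibility at $\bar w$ is actually needed for Theorem~\ref{thmC2reduce}).

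It then remains to convert \eqref{eq:regularity1} into the bounded linear regularity needed by Theorem~\ref{thmC2reduce}. Using Slater's condition for $\mathfrak{B}$ (valid since $\gamma^\circ(0)=0<1$), the assumption $\bar w\in{\rm dom\,}\partial\gamma^\circ$, and $0\notin\partial\gamma^\circ(\bar w)$ (valid because $\gamma^\circ(\bar w)=1>0=\min\gamma^\circ$, so $\bar w$ is not a minimizer), the normal-cone-to-sublevel-set formula \cite[Theorem~23.7]{Roc70} gives $N_{\mathfrak{B}}(\bar w)={\rm cone\,}\partial\gamma^\circ(\bar w)$, and \cite[Corollary~6.8.1]{Roc70} gives ${\rm ri\,}N_{\mathfrak{B}}(\bar w)=\bigcup_{\lambda>0}\lambda\,{\rm ri\,}\partial\gamma^\circ(\bar w)$. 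Thus \eqref{eq:regularity1} is exactly $\mathcal{A}^{-1}\{\mathcal{A}\bar x\}\cap{\rm ri\,}N_{\mathfrak{B}}(\bar w)\neq\emptyset$. As $\mathcal{A}^{-1}\{\mathcal{A}\bar x\}$ is an affine subspace (hence polyhedral) and $\bar x\in\mathcal{A}^{-1}\{\mathcal{A}\bar x\}\cap N_{\mathfrak{B}}(\bar w)$ by the first paragraph, \cite[Corollary~3]{BaBoLi99} shows that $\{\mathcal{A}^{-1}\{\mathcal{A}\bar x\},N_{\mathfrak{B}}(\bar w)\}$ is boundedly linearly regular at $\bar x$. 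Applying Theorem~\ref{thmC2reduce} to $h=f$ then yields the KL property at $\bar x$ with exponent $\frac12$. The delicate point is the surjectivity of the sliced derivative $D_z\Theta(\bar w,1)$ in the second paragraph, which is exactly where the homogeneity and pointedness built into the gauge structure must be exploited.
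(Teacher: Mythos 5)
Your proposal is correct, but it takes a genuinely different route from the paper. The paper never applies Theorem~\ref{thmC2reduce} to $f$ itself: it lifts $f$ to $F(x,t)=\ell(\mathcal{A}x)+\langle v,x\rangle+t+\delta_{\mathfrak{D}}(x,t)$ as in \eqref{C2F}, rewrites $\delta_{\mathfrak{D}}=\sigma_{\mathfrak{D}^{\circ}}$ with $\mathfrak{D}^{\circ}=\{(x,t):\gamma^{\circ}(x)+t\le 0\}$, and observes that $\mathfrak{D}^{\circ}$ is just a reflection of ${\rm epi\,}\gamma^{\circ}$ and hence inherits $C^2$-cone reducibility \emph{for free} from the hypothesis. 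It then applies Theorem~\ref{thmC2reduce} to $F$ (the regularity condition \eqref{eq:regularity1} being converted into bounded linear regularity of $\{\mathcal{A}^{-1}\{\mathcal{A}\bar x\}\times\R,\,N_{\mathfrak{D}^{\circ}}(\bar w,-1)\}$ via the same normal-cone-to-sublevel-set computation you perform) and finally descends from $F$ to $f=\inf_t F(\cdot,t)$ via the inf-projection result, Corollary~\ref{convex_general} --- in keeping with the paper's theme. You instead stay in the original space, writing $\gamma=\sigma_{\mathfrak{B}}$ with $\mathfrak{B}$ the polar-gauge unit ball, which forces you to prove the nontrivial slicing lemma: that $\mathfrak{B}=\{z:\gamma^{\circ}(z)\le 1\}$ is $C^2$-cone reducible at $\bar w$ given that ${\rm epi\,}\gamma^{\circ}$ is. Your argument for the surjectivity of the sliced derivative $D_z\Theta(\bar w,1)$ --- differentiating $s\mapsto\Theta(s\bar w,s)$ along the radial curve, using positive homogeneity to keep it in $K$, and pointedness to force the velocity into $K\cap(-K)=\{0\}$ --- is correct and is the genuinely new content of your route; it is a clean statement of independent interest (the unit ball of a $C^2$-cone reducible gauge is itself $C^2$-cone reducible at points of its boundary sphere). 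The trade-off: the paper's lifting avoids this lemma entirely at the cost of one extra variable and an appeal to Corollary~\ref{convex_general}, while your approach is more direct but must correctly note (as you do) that the proof of Theorem~\ref{thmC2reduce} only uses the cone reduction at the single point $\bar w$, since the theorem is formally stated for sets that are reducible everywhere.
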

\begin{proof}
Since $0 \in \partial \revise{f}(\bar x)$, we see from \cite[Exercise~8.8]{RocWets98} that
\[
\bar w:=-\mathcal{A}^* \nabla \ell (\mathcal{A}\bar x)- v \in \partial \gamma(\bar x).
\]
Since we have from \cite[Proposition~2.1(iv)]{FriMaPong14} that $\gamma^* = \delta_{\frak C}$ with ${\frak C}=\{x:\; \gamma^\circ(x)\le 1\}$, we conclude from \eqref{Young} that $\gamma^\circ(\bar w) \le 1$ and $\gamma(\bar x) = \langle\bar x,\bar w\rangle$. Since $\gamma(\bar x)>0$, we also have from $\gamma(\bar x) = \langle\bar x,\bar w\rangle$ and \cite[Proposition~2.1(iii)]{FriMaPong14} that
\[
1 = \frac{\langle\bar x,\bar w\rangle}{\gamma(\bar x)}\le \sup_{z}\{\langle \bar w,z\rangle:\; \gamma(z)\le 1\} = \gamma^\circ(\bar w).
\]
Thus, it holds that $\gamma^\circ(\bar w)=1$.

Next, suppose in addition that $\bar w\in \revise{{\rm dom\,} \partial \gamma^\circ}$ and \eqref{eq:regularity1} holds. \revise{Let $F(x,t)$ be defined as in \eqref{C2F}}. Observe that
\[
F(x,t)=\ell(\tilde{\mathcal{A}}(x,t))+\langle (v,1), (x,t) \rangle  + \sigma_{\mathfrak{D}^{\circ}}(x,t)
\]
where $\tilde{\mathcal{A}}(x,t):=\mathcal{A} x$ and $\mathfrak{D}^\circ$ is the polar of $\mathfrak{D}$, which is given by $\mathfrak{D}^{\circ}=\{(x,t): \gamma^{\circ}(x)+t \le 0\}$ according to the proof of \cite[Theorem~15.4]{Roc70}.
From our assumption, the set $\{(x,t):\; \gamma^{\circ}(x)\le t\}$ is a $C^2$-cone reducible closed convex set, which implies that $\mathfrak{D}^{\circ}$ is also $C^2$-cone reducible. Now, observe from \cite[Theorem~23.7]{Roc70} that for any $(u,s)\in \revise{{\rm dom\,}\partial \gamma^\circ}\times \R$ satisfying $\gamma^\circ(u) + s = 0$, we have
\[
N_{\mathfrak{D}^{\circ}}(u,s)= {\rm cl}\left(\bigcup_{\lambda \ge 0} \lambda \big(\revise{\partial \gamma^{\circ}(u)},1\big)\right),
\]
which together with \cite[Theorem~6.3]{Roc70} and \cite[Corollary~6.8.1]{Roc70} gives
\begin{equation*}%\label{normalcone}
\revise{{\rm ri\,}N_{\mathfrak{D}^{\circ}}(u,s)}= \bigcup_{\lambda > 0} \lambda \big(\revise{{\rm ri\,}\partial \gamma^{\circ}}(u),1\big).
\end{equation*}
Applying this relation with $(u,s) = (\bar w,-\gamma^\circ(\bar w)) = (\bar w,-1)$ together with the relative interior condition \eqref{eq:regularity1} \revise{shows that}
\begin{equation*}
\revise{\big({\mathcal{A}}^{-1}\{{\cal A}\bar x\} \times \R \big) \cap {\rm ri\,}N_{\mathfrak{D}^{\circ}}(\bar w,-1) \neq \emptyset.}
\end{equation*}
In view of this and \cite[Corollary~3]{BaBoLi99}, we obtain that $\{\big({\mathcal{A}}^{-1}\{{\cal A}\bar x\} \times \R \big),N_{\mathfrak{D}^{\circ}}(\bar w,-1)\}$ is boundedly linearly regular.
It follows from Theorem~\ref{thmC2reduce} that $F$ satisfies the KL property at $(\bar x,\gamma(\bar x))$ with exponent $\frac12$. Since
$\revise{f}(x)=\inf_{t \in \R} F(x,t)$, we see from Corollary \ref{convex_general} that $\revise{f}$ satisfies the KL property at $\bar x$ with exponent $\frac12$.
\end{proof}

While checking $C^2$-cone reducibility directly using the definition can be difficult, a sufficient condition related to standard constraint qualifications was given in \cite[Proposition 3.2]{Shapiro}.\footnote{The quoted result is for $C^1$-cone reducibility. However, it is apparent from the proof how to adapt the result for $C^2$-cone reducibility.} Specifically, let
$K \subseteq \mathbb{Y}$ be a $C^2$-cone reducible closed convex set and $G:\mathbb{X} \to \mathbb{Y}$ be a twice continuously differentiable function. If $G(\bar x)\in K$ and $G$ is nondegenerate at $\bar x$ in the sense that
\begin{equation}\label{cond:nondegenerate}
DG(\bar x){\mathbb{X}}+ \left(T_K(G(\bar x))\cap \big[-T_K(G(\bar x))\big]\right)=\mathbb{Y},
\end{equation}
then $G^{-1}(K)$ is a $C^2$-cone reducible set. In particular, if $g_1,\ldots,g_m$ are $C^2$ functions with
$\{\nabla g_i(\bar x): i \in I(\bar x)\}$ being linearly independent, where $I(\bar x):=\{i:\; g_i(\bar x) = 0\}$, then the set $\{x: g_i(x) \le 0, i=1,\ldots,m\}$ is $C^2$-cone reducible at $\bar x$.

We will now present a few concrete examples of functions to which Theorem \ref{thmC2reduce} and Corollary \ref{C2gauge} can be applied, taking advantage of the aforementioned sufficient condition \eqref{cond:nondegenerate} for checking $C^2$-cone reducibility.
\begin{example}\label{ExC2}
Let $\ell:\mathbb{Y}\to \R$ be a function that is strongly convex on any compact convex set
and has locally Lipschitz gradient, $\mathcal{A}:\mathbb{X} \to \mathbb{Y}$ be a linear map, and $v\in \mathbb{X}$.
\begin{itemize}
\item[{\rm (i)}]  {\bf (Entropy-like regularization)} Let $\mathbb{X}=\mathbb{R}^n$ and $\mathbb{Y}=\mathbb{R}^m$. Denote
\[
p(x)=\begin{cases}
\displaystyle \sum_{i=1}^n x_i \log(x_i)- (\sum_{i=1}^n x_i) \log (\sum_{i=1}^nx_i)& \mbox{ if } x \in \mathbb{R}^n_+ ,\\
\reviser{\infty} & \mbox{ else},
\end{cases}
\]
with the convention that $0 \log 0=0$.
This function is proper closed convex and arises in the study of maximum entropy optimization \cite[Example 11.12]{RocWets98}. We claim that
$\revise{f}(x)=\ell(\mathcal{A}x)+\langle v,x\rangle+p(x)$  satisfies the KL property  with exponent $\frac12$ at  any stationary point $\bar x$. To see this,
recall from \cite[Example 11.12]{RocWets98} that
\[
p(x) = \sigma_{\frak D}(x),\ \ {\it where}\ \ {\frak D}=\{x\in \R^n: g(x) \le 0\},
\]
and $g(x)=\log(\sum_{i=1}^n e^{x_i})$. Then we have from Theorem~\ref{thmC2reduce} that $-\mathcal{A}^*\nabla \ell(\mathcal{A} \bar x)-v\in {\frak D}$. Moreover, for all $x\in {\frak D}$, $\nabla g(x)=(\frac{e^{x_1}}{\sum_{i=1}^n e^{x_i}},\ldots,\frac{e^{x_n}}{\sum_{i=1}^n e^{x_i}}) \neq 0$. Thus, in view of the discussion preceding this example, ${\frak D}$ is $C^2$-cone reducible. Finally, notice that for any $x\in {\frak D}$, the set
\[
N_{\frak D}(x)=\begin{cases}
  \bigcup_{\lambda\ge 0} \lambda \{\nabla g(x)\} & \mbox{ if }g(x) = 0,\\
  \{0\} & \mbox{ if }g(x) < 0,
\end{cases}
\]
is polyhedral, and hence, $\{\mathcal{A}^{-1}\{{\cal A}\bar x\},N_{\frak D}(-\mathcal{A}^*\nabla \ell(\mathcal{A} \bar x)-v)\}$ is boundedly linearly regular \cite[Corollary~5.26]{BaBo96}. So, Theorem \ref{thmC2reduce} implies that $\revise{f}$ satisfies the KL property  with exponent $\frac12$ at  any stationary point $\bar x$.

%\item[{\rm (ii)}] {\bf (Rotated second-order cone constraints)}
%Let $n_i \in \mathbb{N}$, $i=1,\ldots,s$ with $\sum_{i=1}^sn_i=n$. Let $\mathbb{X}=\mathbb{R}^n$, $\mathbb{Y}=\mathbb{R}^m$ and $M_i \in \mathbb{R}^{n_i \times n_i}$ be invertible matrices, $i=1,\ldots,s$. Consider  $\revise{f}(x)=\ell(\mathcal{A}x)+\langle v,x\rangle+\delta_{\prod_{i=1}^s {\frak C}_i}(x)$ where ${\frak C}_i=\{M_i^T z: z
% \in {\rm SOC}_{n_i}\}$ and ${\rm SOC}_{n_i}$ is the second-order cone in $\mathbb{R}^{n_i}$. Then $\revise{f}$ satisfies the KL property with exponent $\frac12$ at any stationary point $\bar x$ under the relative interior condition
%\[
% \mathcal{A}^{-1}\{{\cal A}\bar x\} \cap \revise{{\rm ri} \, \left(N_{\frak D}(-\mathcal{A}^*\nabla \ell(\mathcal{A} \bar x)-v)\right)} \neq \emptyset,
%\]
%where $\frak D=\{(x_1,\ldots,x_s) \in \prod_{i=1}^s \mathbb{R}^{n_i}: M_ix_i \in -{\rm SOC}_{n_i}\}$.
%To see this, note that
%$\delta_{\prod_{i=1}^s {\frak C}_i}= \sigma_{\frak D}$. Moreover, notice that $-\prod_{i=1}^s{\rm SOC}_{n_i}$ is $C^2$-cone reducible and each $M_i$ is invertible. Thus, $\frak D$ is also $C^2$-cone reducible at $\bar x$ and the conclusion follows from Theorem \ref{thmC2reduce}.

\item[{\rm \reviser{(ii)}}] {\bf (Positive semidefinite cone constraints)} Let $\mathbb{X}={\cal S}^n$ and $\mathbb{Y}=\mathbb{R}^m$. Using the $C^2$-cone reducibility of ${\cal S}^n_+$, one can see that  $\revise{f}(X)=\ell(\mathcal{A}X)+ \langle V, X\rangle +\delta_{{\cal S}^n_+}(X)$ satisfies the KL property with exponent $\frac12$ at any stationary point $\bar X$ under the relative interior condition
 $\mathcal{A}^{-1}\{{\cal A}\bar X\} \cap \revise{\color{blue} {\rm ri} \,   \left(N_{-S^{n}_+}(-\mathcal{A}^*\nabla \ell(\mathcal{A} \bar X)-V)\right)} \neq \emptyset$. We note that this result has also been derived in \cite{Cui_Sun_Toh} via a different approach.

\item[{\rm \reviser{(iii)}}] {\bf (Schatten $p$-norm regularization)} Let $\mathbb{X}={\cal S}^n$ and $\mathbb{Y}=\mathbb{R}^m$. Let $p \in [1,2] \cup \reviser{\{\infty\}}$ and consider the following optimization model with  Schatten $p$-norm regularization:
\[
\revise{f}(X)=\ell(\mathcal{A} X)+\langle V,X \rangle+ \tau \|X\|_{p}\ \ \  \mbox{    for all } X \in {\cal S}^n,
\]
where $\|X\|_p=\big(\sum_{i=1}^n |\lambda_i(X)|^p\big)^{\frac{1}{p}}$ and $\lambda_n(X)\ge \lambda_{n-1}(X)\ge \cdots\ge \lambda_1(X)$ are eigenvalues of $X$. The dual norm of $\|\cdot\|_p$ is the Schatten $q$-norm with $\frac{1}{p}+\frac{1}{q}=1$ where $q\in\{1\} \cup \reviser{[2,\infty]}$. Let $g(\lambda_1,\ldots,\lambda_n)=\big(\sum_{i=1}^n |\lambda_i|^q\big)^{\frac{1}{q}}$.
It can be directly verified that $g$ is convex, symmetric and $C^2$-cone reducible. So, $\|X\|_q=g(\lambda(X))$ is also $C^2$-cone reducible \cite[Proposition 3.2]{Cui_Ding_Zhao}. Thus, from Corollary \ref{C2gauge}, $\revise{f}$ satisfies the KL property with exponent $\frac12$ at any nonzero stationary point $\bar X$ under the relative interior condition \eqref{eq:regularity1} with $\gamma(X)=\|X\|_p$.
%It is also worth noting that this result can also be obtained by using the facts that the subdifferential mapping of $\|\cdot\|_p$ is metrically subregular for $p \in [1,2] \cup \{+\infty\}$ \cite{Zhou_Zhang_So} and the spectral mapping
%preserves the metric subregularity of the subdifferential mapping \cite[Proposition 3.9]{Cui_Ding_Zhao}.
\end{itemize}
\end{example}

\section{\revise{ KL exponents for some nonconvex models}}\label{sec5}

\subsection{Difference-of-convex functions}
\revise{In this section, we study a relationship between the KL exponents of the difference-of-convex (DC) function $f$ in \eqref{DC} and the auxiliary function $F$ in \eqref{liftedDC}.  In \cite[Theorem~4.1]{LiPoTa18}, it was shown that if $f$ in \eqref{DC} satisfies the KL property at $\bar{x}\in\revise{{\rm dom\,}\partial f }$ with exponent $\frac12$  and $P_2$ has globally Lipschitz gradient, then $F $ in \eqref{liftedDC} satisfies the KL property at $(\bar{x},\nabla P_2({\cal A}\bar{x}))\in \revise{{\rm dom\,}\partial F} $ with exponent $\frac12$. Here we study the converse implication as a corollary to Theorem~\ref{general}.}

\begin{theorem}[{{\bf KL exponent of DC functions}}]\label{DCth}
Suppose that $f $ and $F $ are defined in \eqref{DC} and \eqref{liftedDC} respectively. If $F$ is a KL function with exponent $\alpha\in[0,1)$, then $f$ is a KL function with exponent $\alpha$.
\end{theorem}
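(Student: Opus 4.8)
The plan is to invoke Theorem~\ref{general} with the specific pair $(f,F)$ in \eqref{DC}--\eqref{liftedDC}, so almost all the work lies in verifying its hypotheses at an arbitrary $\bar x\in{\rm dom}\,\partial f$. First I would record the structural facts. Since $P_2$ is finite-valued and continuous convex, $P_2^{**}=P_2$ by \cite[Theorem~12.2]{Roc70}, hence $\inf_yF(x,y)=P_1(x)-\sup_y\{\langle\mathcal Ax,y\rangle-P_2^*(y)\}=P_1(x)-P_2(\mathcal Ax)=f(x)$, confirming that $f$ is the inf-projection of $F$. Moreover $F$ is proper and closed, because $P_1$ and $P_2^*$ are proper closed convex while the coupling term $-\langle\mathcal Ax,y\rangle$ is continuous. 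Minimizing $F(x,\cdot)$ amounts to $0\in\partial_yF(x,y)=\partial P_2^*(y)-\mathcal Ax$, i.e.\ $\mathcal Ax\in\partial P_2^*(y)$, which by conjugate inversion (see \eqref{Young}) gives $Y(x)=\partial P_2(\mathcal Ax)$, a nonempty compact set for every $x$.

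Next I would establish that $F$ is level-bounded in $y$ locally uniformly in $x$. Because $P_2$ is finite everywhere, its conjugate $P_2^*$ is supercoercive, meaning $P_2^*(y)/\|y\|\to+\infty$ as $\|y\|\to\infty$ (a standard duality fact equivalent to ${\rm dom}\,P_2=\mathbb Y$). On a bounded neighborhood $V$ of any $\bar x$, $\|\mathcal Ax\|\le M$ for some $M$, and $P_1$ is bounded below by some constant $c$ on $V$ (a proper closed convex function admits an affine minorant). Hence $F(x,y)\ge c-M\|y\|+P_2^*(y)$ for all $x\in V$, and supercoercivity forces $\{y:\,F(x,y)\le\alpha\}$ to be bounded uniformly over $x\in V$. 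With this property in hand, Lemma~\ref{Yx}(i) already confirms that $f$ is proper and closed.

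The heart of the argument is condition~(i) of Theorem~\ref{general}. Using the separable sum rule \cite[Proposition~10.5]{RocWets98} on $(x,y)\mapsto P_1(x)+P_2^*(y)$ together with the exact addition of the $C^1$ coupling term \cite[Exercise~8.8]{RocWets98}, I obtain $\partial F(x,y)=\big(\partial P_1(x)-\mathcal A^*y\big)\times\big(\partial P_2^*(y)-\mathcal Ax\big)$. For $\bar y\in Y(\bar x)=\partial P_2(\mathcal A\bar x)$ the second factor is nonempty since $\mathcal A\bar x\in\partial P_2^*(\bar y)$, so the whole question reduces to showing $\partial P_1(\bar x)\neq\emptyset$ whenever $\bar x\in{\rm dom}\,\partial f$. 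This is the main obstacle, because I only know the \emph{limiting} subdifferential of $f$ is nonempty, not the regular one. I would resolve it by approximation: set $g:=P_2\circ\mathcal A$, a finite convex (hence locally Lipschitz) function, so that $P_1=f+g$. Given $\zeta\in\partial f(\bar x)$, pick $x^k\overset{f}{\to}\bar x$ and $\zeta^k\to\zeta$ with $\zeta^k\in\hat\partial f(x^k)$; at each $x^k$ choose $w^k\in\partial g(x^k)$, and the subgradient inequality for the convex $g$ combined with the defining inequality of $\zeta^k\in\hat\partial f(x^k)$ yields $\zeta^k+w^k\in\hat\partial P_1(x^k)$ directly from the definition of the regular subdifferential. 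Local boundedness of $\partial g$ near $\bar x$ makes $\{w^k\}$ bounded, so along a subsequence $w^k\to w$; since $P_1(x^k)=f(x^k)+g(x^k)\to P_1(\bar x)$ by continuity of $g$ and $f(x^k)\to f(\bar x)$, we have $x^k\overset{P_1}{\to}\bar x$, whence $\zeta+w\in\partial P_1(\bar x)$ and thus $\partial P_1(\bar x)\neq\emptyset$.

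Finally, condition~(ii) is immediate: by the computation just completed, $\{\bar x\}\times Y(\bar x)\subseteq{\rm dom}\,\partial F$, and since $F$ is a KL function with exponent $\alpha$ it satisfies the KL property with exponent $\alpha$ at every point of this set. Theorem~\ref{general} then delivers the KL property of $f$ at $\bar x$ with exponent $\alpha$; as $\bar x\in{\rm dom}\,\partial f$ was arbitrary, $f$ is a KL function with exponent $\alpha$. I expect the only genuinely delicate step to be the passage from $\partial f(\bar x)\neq\emptyset$ to $\partial P_1(\bar x)\neq\emptyset$ in the third paragraph; the remaining verifications are routine once the supercoercivity of $P_2^*$ and the product form of $\partial F$ are in place.
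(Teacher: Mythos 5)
Your proposal is correct and follows essentially the same route as the paper: reduce to Theorem~\ref{general}, verify local uniform level-boundedness in $y$ using the fact that ${\rm dom\,}P_2=\mathbb{Y}$ (the paper phrases this via $(P_2^*)^\infty=\sigma_{{\rm dom\,}P_2}$ and a contradiction argument, you via the equivalent supercoercivity of $P_2^*$), and establish condition~(i) by showing ${\rm dom\,}\partial f\subseteq{\rm dom\,}\partial P_1$ together with the product form of $\partial F$ and $Y(\bar x)=\partial P_2(\mathcal{A}\bar x)$. The only cosmetic difference is that you prove $\partial P_1(\bar x)\neq\emptyset$ by a hands-on approximation argument, whereas the paper simply invokes the sum rule \cite[Corollary~10.9]{RocWets98} for a sum of a proper closed function and a locally Lipschitz function; both are valid.
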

\begin{proof}
Let $\bar x\in \revise{{\rm dom\,}\partial f}$. We will show that $f$ satisfies the KL property at $\bar x$ with exponent $\alpha$.

Note that we have $\revise{{\rm dom\,}\partial f} = \revise{{\rm dom\,}\partial P_1}$ thanks to \cite[Corollary~10.9]{RocWets98} and the fact that continuous convex functions are locally Lipschitz continuous. Hence, we actually have $\bar x\in \revise{{\rm dom\,} \partial P_1}$.

Now, using  \cite[Exercise~8.8]{RocWets98} and \cite[Proposition~10.5]{RocWets98}, we have for any $\bar \xi \in \partial P_2({\cal A}\bar x)$ that
  \begin{equation}\label{subinclu}
      \partial F (\bar x,\bar \xi)
                                     =\begin{bmatrix}
                                        \partial P_1(\bar x)- {\cal A}^*\bar \xi\ \\
                                        \partial P_2^*(\bar \xi)- {\cal A}\bar x
                                     \end{bmatrix}
                                     \supseteq\begin{bmatrix}
                                        \partial P_1(\bar x)- {\cal A}^*\bar \xi\ \\
                                        0
                                       \end{bmatrix}.
  \end{equation}
where the inclusion follows from the fact that $\partial P^*_2 = \partial P_2^{-1}$ (see \cite[Proposition~11.3]{RocWets98}).
Since $\bar x\in \revise{{\rm dom}\,\partial P_1}$, we see further from \eqref{subinclu} that $\{\bar x\}\times \partial P_2({\cal A}\bar x)\subseteq \revise{{\rm dom}\,\partial F}$.
Then \revise{condition (i)} of Theorem~\ref{general} holds because one can show using \eqref{Young} that $\Argmin_y F (\bar x,y) = \partial P_2({\cal A}\bar x)$. On the other hand, the assumption on KL property of $F$ shows that \revise{condition (ii)} of Theorem~\ref{general} holds. Now, it remains to prove that $F$ is level-bounded in $y$ locally uniformly in $x$ before we can apply Theorem~\ref{general} to establish the desired KL property.

To this end, we will show that for any $x^*\in \mathbb{X}$ and $\beta\in\R$, the following set is bounded:
\begin{align}\label{fanLi0}
  \{(x,y) :\;\|x-x^*\|\le 1,\ F (x,y)\le \beta\}.
\end{align}
Suppose to the contrary that the above set is unbounded for some $x^*$ and $\beta$.
Then there exists a sequence
\begin{align}\label{fanLi}
  \{(x^k,y^k)\}\subseteq \{(x,y) :\;\|x-x^*\|\le 1,\ F (x,y)\le \beta\}
\end{align}
with $\|y^k\|\to \infty$. Passing to a subsequence if necessary, we may assume without loss of generality that $x^k\to \tilde x$ for some $\tilde x\in B(x^*,1)$ and that $\lim_k \frac{y^k}{\|y^k\|}$ exists. Denote this latter limit by $d$. Then $\|d\|=1$. Next, using the definition of $\{(x^k,y^k)\}$ in \eqref{fanLi} and the definition of $F $, we have for all sufficiently large $k$ that
  \begin{align}
  \beta &\ge F (x^k,y^k)  = P_1(x^k) - \langle {\cal A}x^k,y^k\rangle  + P^*_2(y^k)\ge f(x^k)\label{1stineq}\\
  \Rightarrow \frac{\beta}{\|y^k\|}&\ge \frac{P_1(x^k)}{\|y^k\|} - \left\langle {\cal A}x^k,\frac{y^k}{\|y^k\|}\right\rangle  + \frac{P^*_2(y^k)}{\|y^k\|},\label{2ndineq}
  \end{align}
where the second inequality in \eqref{1stineq} follows from the definition of Fenchel conjugate.
Then we see in particular from \eqref{1stineq} and the closedness of $f$ that $\tilde x\in \revise{{\rm dom\,}f} = \revise{{\rm dom\,}P_1}$. Using this, the closedness of $P_1$ and the definition of $d$, we have upon passing to limit inferior in \eqref{2ndineq} that
  \begin{equation*}
  \begin{aligned}
   0&\ge - \langle {\cal A}\tilde x,d\rangle  +  \liminf_{k\to\infty}\frac{P^*_2(y^k)}{\|y^k\|}\overset{\rm (a)}\ge - \langle {\cal A}\tilde x,d\rangle  + (P_2^*)\color{blue}^\infty\color{black}(d)\\
   &\overset{\rm (b)}=  - \langle {\cal A}\tilde x,d\rangle  + \sigma_{\revise{{\rm dom\,}P_2}}(d) = - \langle {\cal A}\tilde x,d\rangle  +\sup_{x\in\revise{{\rm dom\,}P_2}}\{\langle x,d\rangle \},
   \end{aligned}
  \end{equation*}
   where (a) follows from \cite[Theorem~2.5.1]{AuTe2003} and (b) follows from  \cite[Theorem~2.5.4]{AuTe2003}.
   Since $\revise{{\rm dom\,} P_2 = \mathbb{Y}}$, we deduce from the above inequality that  $d = 0$, which contradicts the fact that $\|d\|=1$. Thus, we have shown that \eqref{fanLi0} is bounded for any $x^*\in \mathbb{X}$ and any $\beta\in \R$, which implies that $F$ is level-bounded in $y$ locally uniformly in $x$. This completes the proof.
\end{proof}

\subsection{Bregman envelope}\label{BreE}

\revise{In this section, we discuss the KL exponent of the Bregman envelope \eqref{Benve} of a proper closed function. We consider the following assumption on $\phi$ in \eqref{Bphi}, which is general enough for the corresponding \eqref{Benve} to include \reviser{the celebrated Moreau envelope and the forward-backward envelope introduced in \cite{StThPa17}} as special cases. Further comments on this assumption will be given in Remark~\ref{remarkEnvelope} below.}

\begin{assumption}\label{Ass11}
  The function $\phi$ \revise{in \eqref{Bphi}} is twice continuously differentiable and there exists $a_1>0$ such that for all $x\in \mathbb{X}$,
 \begin{align}\label{Ass1}
 \nabla^2 \phi(x)- a_1{\cal I}\succeq 0;
 \end{align}
 here ${\cal I}$ is the identity map, and for a linear map ${\cal A}: \mathbb{X}\to \mathbb{X}$, ${\cal A}\succeq 0$ means it is positive semidefinite, i.e., ${\cal A} = {\cal A}^*$ and $\langle h,{\cal A}h\rangle\ge 0$ for all $h \in \mathbb{X}$.
 %where $\nabla^2\phi(x)$ stands for the Hessian matrix of $\phi$ at $x$.
\end{assumption}

Given a proper closed {\color{blue} function} $f$ and a {\color{blue} function} $\phi$ satisfying Assumption~\ref{Ass11}, we first analyze the KL property of the following auxiliary function:
\begin{align}\label{obofBenve}
F(x,y):= f(y) + \B_\phi(y,x)
\end{align}
with $\B_\phi$ defined in \eqref{Bphi}. For this function, applying \cite[Proposition~8.8]{RocWets98} and \cite[Proposition~10.5]{RocWets98}, we have the following formula for $\partial F$ at any $x\in\mathbb{X}$ and $y\in \revise{{\rm dom\,}f}$,
\begin{equation}\label{partialF}
\partial  F (  x,  y)=\begin{bmatrix}
-\nabla^2\phi(  x)(y-  x)\\
\partial f(  y) + \nabla \phi(  y)- \nabla \phi(  x)
\end{bmatrix}.
\end{equation}
This formula will be used repeatedly in our discussion below.

\begin{lemma}\label{OBofBenve}
Let \revise{$f:\mathbb{X}\to\mathbb{R}\cup\{\infty\}$} be a KL function with exponent $\alpha\in[\frac12,1)$. Let $F$ be defined in \eqref{obofBenve} with $\phi$ satisfying Assumption \ref{Ass11}. Then $F$ is a KL function with exponent $\alpha$.
\end{lemma}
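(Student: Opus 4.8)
The plan is to show that $F$ satisfies the KL property with exponent $\alpha$ at every point of ${\rm dom}\,\partial F$, splitting into stationary and non-stationary points. At a point $(\bar x,\bar y)\in {\rm dom}\,\partial F$ with $0\notin\partial F(\bar x,\bar y)$, the outer semicontinuity of the limiting subdifferential forces $\dist(0,\partial F(x,y))$ to stay bounded away from $0$ whenever $(x,y)$ is close to $(\bar x,\bar y)$ and $F(x,y)$ is close to $F(\bar x,\bar y)$; since $F(x,y)-F(\bar x,\bar y)$ can be made arbitrarily small, the desingularizing function $\varphi(s)=\bar c\,s^{1-\alpha}$ satisfies \eqref{phichoice} once $\bar c$ is large enough, so the KL property with exponent $\alpha$ holds there trivially. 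It therefore remains to treat stationary points.

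First I would identify the stationary points of $F$. By the formula \eqref{partialF}, the inclusion $0\in\partial F(\bar x,\bar y)$ forces $\nabla^2\phi(\bar x)(\bar y-\bar x)=0$; since Assumption~\ref{Ass11} makes $\nabla^2\phi(\bar x)$ invertible (indeed $\nabla^2\phi(\bar x)\succeq a_1{\cal I}$), this yields $\bar y=\bar x$, and the second block then reduces to $0\in\partial f(\bar x)$. Hence every stationary point of $F$ has the form $(\bar x,\bar x)$ with $\bar x$ stationary for $f$, and $F(\bar x,\bar x)=f(\bar x)$ since $\B_\phi(\bar x,\bar x)=0$.

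The core of the proof is a local lower bound on $\dist(0,\partial F(x,y))$ near such a point. I would fix a small convex neighborhood $V$ of $(\bar x,\bar x)$ on which $\nabla^2\phi$ is bounded by some $L>0$, so that $\B_\phi(y,x)\le\frac{L}2\|y-x\|^2$ and $\|\nabla\phi(y)-\nabla\phi(x)\|\le L\|y-x\|$, and shrink $V$ so that $(x,y)\in V$ implies $\|y-\bar x\|<\epsilon_0$, where $\epsilon_0,a_0,c_0>0$ are furnished by the KL property of $f$ at $\bar x$ with exponent $\alpha$. Writing $A:=\|\nabla^2\phi(x)(y-x)\|$ and $B:=\dist(0,\partial f(y)+\nabla\phi(y)-\nabla\phi(x))$, formula \eqref{partialF} gives $\dist(0,\partial F(x,y))^2=A^2+B^2$, while $\nabla^2\phi(x)\succeq a_1{\cal I}$ gives $A\ge a_1\|y-x\|$, so $\|y-x\|\le a_1^{-1}A\le a_1^{-1}\dist(0,\partial F(x,y))$. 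A triangle inequality on the second block then produces $\dist(0,\partial f(y))\le B+\|\nabla\phi(y)-\nabla\phi(x)\|\le B+La_1^{-1}A\le C_2\,\dist(0,\partial F(x,y))$, and the Bregman term obeys $\B_\phi(y,x)\le\frac{L}{2a_1^2}A^2\le C_1\,\dist(0,\partial F(x,y))^2$.

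Finally I would argue by cases for $(x,y)\in V$ with $f(\bar x)<F(x,y)<f(\bar x)+a$, choosing $a\in(0,\min\{1,a_0\})$. If $\dist(0,\partial F(x,y))\ge1$, then $\dist(0,\partial F(x,y))\ge c\,(F(x,y)-f(\bar x))^\alpha$ is immediate for any $c\le1$, because $(F(x,y)-f(\bar x))^\alpha<a^\alpha<1$. If instead $\dist(0,\partial F(x,y))<1$, then $A<1$, and here the hypothesis $\alpha\ge\frac12$ (so $1/\alpha\le2$) lets me replace the quadratic bound by $A^2\le A^{1/\alpha}$, giving $\B_\phi(y,x)\le C_1\,\dist(0,\partial F(x,y))^{1/\alpha}$; for the term $f(y)-f(\bar x)$ I would drop it when it is nonpositive, and otherwise use $f(\bar x)<f(y)\le F(x,y)<f(\bar x)+a_0$ together with $\|y-\bar x\|<\epsilon_0$ to invoke the KL property of $f$, obtaining $f(y)-f(\bar x)\le c_0^{-1/\alpha}\dist(0,\partial f(y))^{1/\alpha}\le C_3\,\dist(0,\partial F(x,y))^{1/\alpha}$. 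Summing the two contributions yields $F(x,y)-f(\bar x)\le C_4\,\dist(0,\partial F(x,y))^{1/\alpha}$, i.e. $\dist(0,\partial F(x,y))\ge C_4^{-\alpha}(F(x,y)-f(\bar x))^\alpha$, so $c=\min\{1,C_4^{-\alpha}\}$ works. The hard part is precisely this power-matching step: it is where $\alpha\ge\frac12$ is indispensable, since the Bregman distance scales quadratically in $\|y-x\|\sim A$ and only an exponent at least $\frac12$ allows that quadratic contribution to be absorbed into $\dist(0,\partial F)^{1/\alpha}$ without degrading the exponent.
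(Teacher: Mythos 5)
Your proposal is correct and follows essentially the same route as the paper's proof: reduce to stationary points (the paper cites \cite[Lemma~2.1]{LiPong17} for what you argue via outer semicontinuity), identify stationary points as diagonal points $(\bar x,\bar x)$ via \eqref{Ass1}, bound $\dist(0,\partial f(y))$ and $\B_\phi(y,x)$ by powers of $\dist(0,\partial F(x,y))$ using \eqref{partialF}, and exploit $\alpha\ge\frac12$ to absorb the quadratically scaling Bregman term. The only difference is bookkeeping: where the paper routes the power-$\frac1\alpha$ manipulations through \cite[Lemma~2.2]{LiPong17} and \cite[Lemma~3.1]{LiPong17} and arranges $\B_\phi(y,x)<1$ by shrinking the neighborhood, you use the Pythagorean splitting $\dist(0,\partial F)^2=A^2+B^2$, a plain triangle inequality, and a case split on $\dist(0,\partial F)\gtrless 1$ — equally valid.
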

\begin{proof}

Thanks to \cite[Lemma~2.1]{LiPong17}, it suffices to show that $F$ satisfies the KL property at any point $(x,y)$ with $0\in\partial F (x,y)$. Let $(\bar x,\bar y)$ be such that  $0\in\partial F(\bar x,\bar y)$. Then in view of \eqref{partialF}, we see that $0\in\partial F(\bar x,\bar y)$ implies that $\nabla^2\phi( \bar x)(\bar y- \bar  x) = 0$.
%\[
%0= -\nabla^2\phi( \bar x)(\bar y- \bar  x){\rm\ and\ } 0\in \partial f(\bar   y) + \nabla \phi( \bar  y)- \nabla \phi(\bar   x).
%\]
Combining this with  \eqref{Ass1} we deduce that $\bar y=\bar x$.

Next, since $f$ is a KL function with exponent $\alpha$, there exist $c,\eta,\epsilon>0$ such that
\begin{align}\label{KLff}
  \frac{1}{c}\dist^{\frac{1}{\alpha}}(0,\partial f(y))\ge f(y)-f(\bar x)
\end{align}
whenever
\revise{$y\in B(\bar x,\epsilon)\cap {\rm dom}\,\partial f$} and $f(y)<f(\bar x) + \eta$.
Since $\phi$ is twice continuously differentiable, by shrinking $\epsilon$ further if necessary, we see that there exists $b_1>a_1$ \color{blue}with $a_1$ being as in \eqref{Ass1} \color{black} such that for any $(x,y)\in B((\bar x,\bar x),\epsilon)$, there exists $x_0\in B(\bar x,\epsilon)$ so that
\begin{equation*}
  \|\nabla \phi(  y)- \nabla \phi(  x)\|\le b_1\|y-x\|\ \ {\rm and}\ \ \langle y-x,\nabla \phi(y)- \nabla \phi(x)\rangle = \langle y-x,[\nabla^2 \phi(x_0)](y-x)\rangle.
\end{equation*}
To the second relation in the above display, apply Cauchy-Schwartz inequality to the left hand side and apply \eqref{Ass1} to the right hand side to obtain $\|y-x\|\|\nabla \phi(x) - \nabla\phi(y)\|\ge a_1\|y-x\|^2$. Combining this with the first relation in the above display, we obtain that
\begin{align}\label{b11}
b_1\|y-x\|\ge\|\nabla \phi(  y)- \nabla \phi(  x)\|\ge a_1\|y-x\|.
\end{align}

Now, combining \eqref{partialF} with \cite[Lemma~2.2]{LiPong17}, we deduce that there exists $C_0>0$ such that for $(x,y)\in B((\bar x,\bar x),\epsilon)$ \revise{ with $y\in {\rm dom}\,\partial f$},
\begin{equation}\label{disttildeF2}
\begin{aligned}
  &\dist^{\frac{1}{\alpha}}(0,\partial F (  x,  y))
  \ge  C_0\left(\|\nabla^2\phi(  x)(y-  x)\|^{\frac{1}{\alpha} } +  \inf_{\xi\in\partial f(y)}\|\xi + \nabla \phi(  y)- \nabla \phi(  x)\|^{\frac{1}{\alpha}}\right)\\
  &\overset{\rm (a)}\ge C_0\left(a_1^{\frac{1}{\alpha}}\|y-x\|^{\frac{1}{\alpha}}  + (a_1b_1^{-1})^{\frac{1}{\alpha}}\inf_{\xi\in\partial f(y)}\|\xi + \nabla \phi(  y)- \nabla \phi(  x)\|^{\frac{1}{\alpha}}\right)\\
  &\overset{\rm (b)}\ge C_0\left(a_1^{\frac{1}{\alpha}}\|y-x\|^{\frac{1}{\alpha}}  +  (a_1b_1^{-1})^{\frac{1}{\alpha}}\inf_{\xi\in\partial f(y)}\eta_1\|\xi\|^{\frac{1}{\alpha}} - (a_1b_1^{-1})^{\frac{1}{\alpha}}\eta_2\|\nabla \phi(  y)- \nabla \phi(  x)\|^{\frac{1}{\alpha}}\right)\\
  &\overset{\rm (c)}\ge C_0\left(a_1^{\frac{1}{\alpha}}\|y-x\|^{\frac{1}{\alpha}}  +  (a_1b_1^{-1})^{\frac{1}{\alpha}}\inf_{\xi\in\partial f(y)}\eta_1\|\xi\|^{\frac{1}{\alpha}} - a_1^{\frac{1}{\alpha}}\eta_2\|y-x\|^{\frac{1}{\alpha}}\right)\\
  &\ge C_1\left(   \inf_{\xi\in\partial f(y)}\|\xi\|^{\frac{1}{\alpha}} + \|y-x\|^{\frac{1}{\alpha}} \right),
  \end{aligned}
\end{equation}
where (a) follows from \eqref{Ass1} and the fact that $\left(\frac{a_1}{b_1}\right)^{\frac{1}{\alpha}}<1$, (b) follows from \cite[Lemma~3.1]{LiPong17} for some $\eta_1>0$ and $\eta_2\in(0,1)$, (c) follows from the first inequality in \eqref{b11}, and the last inequality holds with $C_1:= C_0\min\{(1-\eta_2)a_1^\frac1\alpha,\eta_1(a_1b_1^{-1})^{\frac{1}{\alpha}}\}>0$.

Next, since $\nabla \phi$ is Lipschitz continuous on $\revise{B(\bar x,\epsilon/2)}$ with Lipschitz constant $b_1$ in view of \eqref{b11}, by shrinking $\epsilon$ further,
\revise{we may assume $2b_1\epsilon^2 < 1$ and that for any $(x,y)\in B((\bar x,\bar x),\epsilon)$,}
\begin{align}\label{lessthan1}
0\le \B_\phi(y,x)=\phi(y)-\phi(x)-\langle \nabla \phi(x),y-x\rangle \le \frac{b_1}{2}\|y-x\|^2 \le \frac{b_1}2(2\epsilon)^2<1,
\end{align}
where the first inequality follows from the convexity of $\phi$. Combining this with \eqref{disttildeF2}, we deduce further that for $(x,y)\in B((\bar x,\bar x),\epsilon)$ \revise{ with $y\in {\rm dom}\,\partial f$ and} $F(x,y)<F ( \bar x, \bar  x) +\eta$,
  \begin{align*}
  &\dist^{\frac{1}{\alpha}}(0,\partial F (x,y)) \ge C_1\left(\inf_{\xi\in\partial f(y)}\|\xi\|^{\frac{1}{\alpha}} + \left(2b_1^{-1}\B_\phi(y,x)\right)^{\frac{1}{2\alpha}}\right)\\
  &\overset{\rm (a)}\ge  C_1\left(\inf_{\xi\in\partial f(y)}  \|\xi\|^{\frac{1}{\alpha}}+(2b_1^{-1})^{\frac{1}{2\alpha}}\B_\phi(y,x)\right)
  \overset{\rm (b)}= C_1c\left(\inf_{\xi\in\partial f(y)}  c^{-1}\|\xi\|^{\frac{1}{\alpha}}+(2b_1^{-1})^{\frac{1}{2\alpha}}c^{-1}\B_\phi(y,x)\right)\\
  &\overset{\rm (c)}\ge C_2\left(\inf_{\xi\in\partial f(y)}  c^{-1}\|\xi\|^{\frac{1}{\alpha}}+\B_\phi(y,x)\right)\overset{\rm (d)}\ge C_2\left(f(y)-f(\bar x)+\B_\phi(y,x)\right)\\
  & = C_2\left( F (x,y)- F (\bar x,\bar x)\right)
\end{align*}
where (a) holds because $\frac{1}{2\alpha}\le 1$ and $\B_\phi(y,x)<1$, thanks to \eqref{lessthan1}, the constant $c$ for (b) comes from \eqref{KLff}, (c) holds with $C_2 := C_1c\min\{1,(2b_1^{-1})^{\frac{1}{2\alpha}}c^{-1}\}$, (d) follows from \eqref{KLff} because $(x,y)\in B((\bar x,\bar x),\epsilon)$, \revise{$y\in {\rm dom}\,\partial f$} and $f(y)\le F(x,y)<F(\bar x,\bar x)+\eta=f(\bar x)+\eta$, and the last equality holds
because $f(\bar x)=F (\bar x,\bar x)$. This completes the proof.
\end{proof}

We are now ready to analyze the KL property of the Bregman envelope $F_\phi$ in \eqref{Benve}.
\begin{theorem}[{{\bf KL exponent of Bregman envelope}}]\label{BenvelopeKL}
 Let \revise{$f:\mathbb{X}\to\mathbb{R}\cup\{\infty\}$} be a proper closed function with  $\inf f>-\infty$.  Suppose that $\phi$ satisfies Assumption \ref{Ass11} and  that $f$ is a KL function with exponent $\alpha\in[\frac12,1)$. Then $F_\phi$ defined in \eqref{Benve} is a KL function with exponent $\alpha$.
\end{theorem}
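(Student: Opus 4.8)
The plan is to write $F_\phi$ as the inf-projection $F_\phi(x)=\inf_y F(x,y)$ of the auxiliary function $F$ in \eqref{obofBenve} and then invoke Theorem~\ref{general} at each $\bar x\in\dom\partial F_\phi$. The analytically heavy part---that $F$ is itself a KL function with exponent $\alpha$---has already been isolated in Lemma~\ref{OBofBenve}, so the remaining task is to check the structural hypotheses of Theorem~\ref{general}.

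First I would verify that $F$ is proper and closed, which is immediate from the properness and closedness of $f$ together with the joint continuity of $\B_\phi(\cdot,\cdot)$ (a consequence of $\phi$ being twice continuously differentiable). The key step, and the one I expect to be the main obstacle, is to show that $F$ is level-bounded in $y$ locally uniformly in $x$; this is where both hypotheses $\inf f>-\infty$ and Assumption~\ref{Ass11} are genuinely needed. A second-order Taylor expansion of $\phi$ together with the curvature bound $\nabla^2\phi\succeq a_1{\cal I}$ from \eqref{Ass1} yields the quadratic minorant $\B_\phi(y,x)\ge\frac{a_1}{2}\|y-x\|^2$ for all $x,y\in\mathbb{X}$. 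Writing $m:=\inf f>-\infty$, this gives $F(x,y)\ge m+\frac{a_1}{2}\|y-x\|^2$. Consequently, for any fixed point, its unit-ball neighborhood $V$, and any $\alpha_0\in\R$, the condition $F(x,y)\le\alpha_0$ with $x\in V$ forces $\|y-x\|\le\sqrt{2(\alpha_0-m)/a_1}$, so $y$ stays bounded; hence the sublevel set $\{(x,y):x\in V,\ F(x,y)\le\alpha_0\}$ is bounded, as required. By Lemma~\ref{Yx}(i), it then follows that $F_\phi$ is proper and closed and that $Y(x):=\Argmin_y F(x,y)$ is nonempty and compact on $\dom\partial F_\phi$.

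Finally I would fix an arbitrary $\bar x\in\dom\partial F_\phi$ and check conditions (i) and (ii) of Theorem~\ref{general} on $\{\bar x\}\times Y(\bar x)$. For (i), let $\bar y\in Y(\bar x)$; then $F(\bar x,\bar y)=F_\phi(\bar x)<\infty$, so $\bar y\in\dom f$, and since $\bar y$ minimizes the sum of $f$ and the smooth function $\B_\phi(\cdot,\bar x)$, Fermat's rule gives $\nabla\phi(\bar x)-\nabla\phi(\bar y)\in\partial f(\bar y)$, whence $\partial f(\bar y)\neq\emptyset$. Substituting this into the subdifferential formula \eqref{partialF} shows $\partial F(\bar x,\bar y)\neq\emptyset$, and in fact $\{\bar x\}\times Y(\bar x)\subseteq\dom\partial F$. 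Condition (ii) is then immediate from Lemma~\ref{OBofBenve}, since $F$ has the KL property with exponent $\alpha$ at every point of $\dom\partial F$, in particular on $\{\bar x\}\times Y(\bar x)$. Theorem~\ref{general} now yields the KL property of $F_\phi$ at $\bar x$ with exponent $\alpha$; as $\bar x\in\dom\partial F_\phi$ was arbitrary, $F_\phi$ is a KL function with exponent $\alpha$, completing the argument.
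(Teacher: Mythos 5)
Your proposal is correct and follows essentially the same route as the paper's own proof: reduce to Theorem~\ref{general} via the inf-projection $F_\phi(x)=\inf_y F(x,y)$, establish level-boundedness in $y$ locally uniformly in $x$ from the strong convexity of $\phi$ (Assumption~\ref{Ass11}) together with $\inf f>-\infty$, verify condition (i) by Fermat's rule and \eqref{partialF}, and get condition (ii) from Lemma~\ref{OBofBenve}. No gaps.
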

\begin{proof}
  Let $F$ be defined as in \eqref{obofBenve}. We will use Theorem \ref{general} to deduce the KL exponent of $F_\phi$ from that of $F$. To this end, we need to check all the conditions required by Theorem \ref{general}.

  First, we claim that $F$ is level-bounded in $y$ locally uniformly in $x$. To prove this, fix any $x_0\in\mathbb{X}$ and $t\in \R$. Define
  \[
  U_{x_0}:=\{(x,y):\ \|x-x_0\|\le 1,\  F(x,y)\le t\}.
  \]
   \color{blue}Thus, it suffices to \color{black}show that $U_{x_0}$ is bounded. To this end, note that $\phi$ is strongly convex with modulus $a_1$ according to Assumption \ref{Ass11}.
%   we have for any $(x,y)\in U_{x_0}$ that,
%  \[
%  (x-y)^T\nabla^2 \phi(x)(x-y)\ge a_1\|x-y\|^2,
%  \]
%  which by \cite[Theorem~2.1.10]{Ne98}, we know that $\phi$ is strongly convex and
  We have from this and the definition of Bregman distance that for any $(x,y)\in U_{x_0}$,
  \[
  \frac{a_1}{2}\|x-y\|^2\le \B_\phi(\revise{y,x}).
  \]
  Since $\inf f > -\infty$ by assumption, we deduce further that for any $(x,y)\in U_{x_0}$,
\[
\inf f + \frac{a_1}{2}\|x-y\|^2\le \inf f + \B_\phi(\revise{y,x})\le f(y)  + \B_\phi(y,x)=F (x,y)\le t.
\]
Since $x\in B(x_0,1)$, we deduce from the above inequality that $U_{x_0}$ is bounded. Thus, we have shown that $F$ is level-bounded in $y$ locally uniformly in $x$.

%Next, fix any $x_0\in\dom F_\phi$ and any $y_0\in\Argmin_y F (x_0,y)$; such a $y_0$ exists thanks to thanks to \cite[Theorem~1.17(a)]{RocWets98}, which states that $\Argmin_y  F (x_0,y)\neq \emptyset$.  Since $F_\phi$ is level-bounded in $y$ locally uniformly in $x$, and $f(y_0) + \B_\phi(y_0,\cdot)$ is continuous on $\R^n$, we conclude from \cite[Theorem~1.17(c)]{RocWets98} that $F_\phi$ is continuous on $\dom F_\phi$. This shows that condition (iv) in Theorem~\ref{general} is satisfied. Moreover
Next, using \cite[Exercise~8.8]{RocWets98}, we have for any $x\in\revise{{\rm dom\,}\partial F_\phi}$ and any $\bar y\in\Argmin_y  F (x,y) $ that
\[
0\in\partial f(\bar y) + \nabla \B_\phi(\cdot,x)(\bar y),
\]
which implies that $\partial f(\bar y)\neq\emptyset$.
This together with \eqref{partialF} implies that $\partial F (x,\bar y)\neq\emptyset$ for any such $x$ and $\bar y$. In particular, \revise{condition (i)} in Theorem~\ref{general} is satisfied.

Finally, note that \revise{condition (ii)} in Theorem~\ref{general} is also satisfied thanks to Lemma~\ref{OBofBenve}. Thus, we deduce from Theorem~\ref{general} that $F_\phi$ satisfies the KL property with exponent $\alpha$ at any $x\in \revise{{\rm dom\,}\partial F_\phi}$.
\end{proof}

\begin{remark}\label{remarkEnvelope}
The Bregman envelope \eqref{Benve} with $\phi$ satisfying Assumption~\ref{Ass11} covers several envelopes studied in the literature.
\begin{enumerate}[{\rm (i)}]
  \item When $\phi(\cdot)=\frac{1}{2\lambda}\|\cdot\|^2$ with some $\lambda>0$, the function $F_\phi$ in \eqref{Benve} becomes
\[
F_\phi(x)=\inf_y\left\{f(y) + \frac{1}{2\lambda}\|x-y\|^2\right\}=:e_\lambda f(x).
\]
This function is known as the Moreau envelope of $f$. In \cite[Theorem~3.4]{LiPong17}, it was proved that if $f$ is a convex KL function with exponent $\alpha\in(0,\frac23)$ that is continuous on $\revise{{\rm dom\,}\partial f}$,  then $e_\lambda f$ is a  KL function with exponent $\max\left\{\frac12,\frac{\alpha}{2-2\alpha}\right\}$. Here, without the convexity and continuity  assumptions, we can obtain a tighter estimate on the KL exponent of $e_\lambda f$ via Theorem \ref{BenvelopeKL}: if $f$ is a KL function with  exponent $\alpha\in[\frac12,1)$ and $\inf f>-\infty$, then $e_\lambda f$ is a KL function with exponent $\alpha$.

  \item If the function $f$ in \eqref{Benve} takes the form $h + g$, where $g$ is a proper closed function, and $h$ is twice continuously differentiable with Lipschitz gradient whose modulus is less than $\frac1\gamma$, then the function $\phi(x) := \frac1{2\gamma}\|x\|^2 - h(x)$ is convex and satisfies Assumption~\ref{Ass11}. The forward-backward envelope $\psi_\gamma$ of the function $f = h+g$ was defined in \cite{StThPa17} as follows (see also the discussion in \cite[Section~2]{LiuPong16}):
  \[
  %\psi_\gamma(x):=\inf_y\left\{f(x) + \langle \nabla f(x),y-x\rangle  + \frac{1}{\gamma}\|y-x\|^2 + g(y)\right\}
  \psi_\gamma(x)= \inf_y\{h(y)+g(y) + \B_\phi(y,x)\}.
  \]
In \cite[Theorem~3.2]{LiuPong16}, it was shown that if the {\color{blue} first-order error bound condition (or error bound condition in the sense of Luo-Tseng)}  holds for $h+g$, with $h$ being \revise{in addition} analytic and $g$ being  \revise{in addition convex,} continuous on $\revise{{\rm dom\,}\partial g}$, subanalytic and bounded below, then \revise{$\psi_\gamma$} is a KL function with exponent $\frac12$. Here, in view of Theorem \ref{BenvelopeKL}, we can deduce the KL exponent of $\psi_\gamma$ \revise{without the convexity and (sub)analyticity assumptions}: if $f = h+g$ is a KL function with exponent $\alpha\in[\frac12,1)$ and $\inf f>-\infty$, {\color{blue} $g$ is a proper closed function}, and $h$ is twice continuously differentiable with Lipschitz gradient whose modulus is less than $\frac1\gamma$, then $\psi_\gamma$ is a KL function with exponent $\alpha$.
\end{enumerate}

\end{remark}

\subsection{Least squares loss function with rank constraint}\label{Lr}

\revise{In this section, we compute an explicit KL exponent of the function $f$ in \eqref{leastRank}, which can be rewritten as an inf-projection as in \eqref{f4=hatf4}}.
 Now, observe further that one can relax the orthogonality constraint and introduce a penalty function without changing the optimal value in \eqref{f4=hatf4}, i.e.,
\begin{equation}\label{f4=tildef4}
f (X)=\inf_{U}\bigg\{\underbrace{\frac{1}{2}\|{\cal A}X-b\|^2 + \frac12\|U^TU - I_{m-k}\|_F^2 + \delta_{\mathfrak{\tilde D}}(X,U)}_{\tilde f(X,U)}+ \delta_{\mathfrak{\tilde B}}(X,U)\bigg\},
\end{equation}
where
\[
\begin{aligned}
  \mathfrak{\tilde D}&:=\{(X,U)\in\R^{m\times n}\times\R^{m\times (m-k)}:\;U^TX=0\},\\
  \mathfrak{\tilde B}&:=\{(X,U)\in\R^{m\times n}\times\R^{m\times (m-k)}: 0.5I_{m-k}\preceq U^TU\preceq 2I_{m-k}\},
\end{aligned}
\]
where $A\preceq B$ means the matrix $B-A$ is positive semidefinite. In view of \eqref{f4=tildef4}, as another application of Theorem~\ref{general}, we will deduce the KL exponent of $f$ via that of $\tilde f + \delta_{{\frak {\tilde B}}}$.

We start with the following result, which is of independent interest.

\begin{theorem}\label{equalityconstrainLL}
  Let $h:\mathbb{X}\to \R$ and $G:\mathbb{X}\to \mathbb{Y}$ be continuously differentiable. Assume that $G^{-1}\{0\}\neq\emptyset$ and define the functions $g$ and $g_1$ by
  \begin{equation*}\label{Definefunctions}
    g(x) := h(x) + \delta_{G^{-1}\{0\}}(x),\ \ \ \ g_1(x,\lambda):= h(x) + \langle \lambda,G(x)\rangle.
  \end{equation*}
  Let $\bar x\in\revise{{\rm dom\,}\partial g}$ and suppose that the linear map $\nabla G(\bar x): \mathbb{Y}\to \mathbb{X}$ is injective. Then the following statements hold:
  \begin{enumerate}[{\rm (i)}]
    \item There exists $\epsilon>0$ so that for each $x\in B(\bar x,\epsilon)$, the function $\lambda\mapsto \|\nabla h(x) + \nabla G(x)\lambda\|$ has a unique minimizer.
    \item If $g_1$ satisfies the KL property at $(\bar x, \lambda(\bar x))$ with exponent $\alpha$, then $g$ satisfies the KL property at $\bar x$ with exponent $\alpha$, where $\lambda(\bar x)$ is the unique minimizer of $\lambda\mapsto \|\nabla h(\bar x) + \nabla G(\bar x)\lambda\|$.
  \end{enumerate}
\end{theorem}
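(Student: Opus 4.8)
The plan is to exploit a clean duality between the two functions: under the injectivity (constraint qualification) hypothesis, the distance of $0$ to $\partial g$ at a feasible point coincides \emph{exactly} with the norm of the gradient of the Lagrangian $g_1$ evaluated at the optimally chosen multiplier, so the KL inequality for $g_1$ can be pulled back to one for $g$. For part (i), I would first observe that since $\nabla G$ is continuous and $\nabla G(\bar x)$ is injective, there is $\epsilon>0$ with $\nabla G(x)$ injective, equivalently $\nabla G(x)^*\nabla G(x)$ positive definite, for all $x\in B(\bar x,\epsilon)$. Hence $\lambda\mapsto \|\nabla h(x)+\nabla G(x)\lambda\|^2$ is a strictly convex coercive quadratic and has a unique minimizer, which is also the unique minimizer of $\lambda\mapsto\|\nabla h(x)+\nabla G(x)\lambda\|$; this proves (i) and yields the explicit formula $\lambda(x)=-(\nabla G(x)^*\nabla G(x))^{-1}\nabla G(x)^*\nabla h(x)$. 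I record here that this formula shows $x\mapsto\lambda(x)$ is continuous on $B(\bar x,\epsilon)$, a fact needed below.

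For part (ii), the first step is to identify $\partial g$ near $\bar x$. Since $g=h+\delta_{G^{-1}\{0\}}$ with $h$ finite, any $x\in{\rm dom\,}\partial g$ lies in $G^{-1}\{0\}$, so I only need to treat feasible $x$, and $\bar x$ itself is feasible. For feasible $x\in B(\bar x,\epsilon)$, injectivity of $\nabla G(x)$ is a constraint qualification, so \cite[Exercise~10.7]{RocWets98} gives $N_{G^{-1}\{0\}}(x)=\nabla G(x)\,\mathbb{Y}={\rm Range\,}(\nabla G(x))$, and the exact sum rule \cite[Exercise~8.8]{RocWets98} gives $\partial g(x)=\nabla h(x)+{\rm Range\,}(\nabla G(x))$. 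Taking the distance to $0$ and using (i),
\[
\dist(0,\partial g(x)) = \min_{\lambda}\|\nabla h(x)+\nabla G(x)\lambda\| = \|\nabla h(x)+\nabla G(x)\lambda(x)\|.
\]
On the other hand $g_1$ is $C^1$ with $\nabla g_1(x,\lambda)=(\nabla h(x)+\nabla G(x)\lambda,\,G(x))$, so for feasible $x$ one has the two key identities $\dist(0,\partial g_1(x,\lambda(x)))=\|\nabla g_1(x,\lambda(x))\|=\|\nabla h(x)+\nabla G(x)\lambda(x)\|=\dist(0,\partial g(x))$ and $g_1(x,\lambda(x))=h(x)=g(x)$ (the last because $G(x)=0$); in particular $g_1(\bar x,\lambda(\bar x))=g(\bar x)$.

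The final step transfers the KL inequality. Applying the KL property of $g_1$ at $(\bar x,\lambda(\bar x))$ with exponent $\alpha$, there are $c,\delta,a>0$ with $\dist(0,\partial g_1(x,\lambda))\ge c\,(g_1(x,\lambda)-g_1(\bar x,\lambda(\bar x)))^{\alpha}$ whenever $\|(x,\lambda)-(\bar x,\lambda(\bar x))\|\le\delta$ and the Lagrangian value lies strictly between $g_1(\bar x,\lambda(\bar x))$ and $g_1(\bar x,\lambda(\bar x))+a$. Using continuity of $\lambda(\cdot)$, I shrink $\epsilon$ so that $\|(x,\lambda(x))-(\bar x,\lambda(\bar x))\|\le\delta$ for all feasible $x\in B(\bar x,\epsilon)$. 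Then for feasible $x\in B(\bar x,\epsilon)$ with $g(\bar x)<g(x)<g(\bar x)+a$, the point $(x,\lambda(x))$ satisfies the hypotheses of the KL inequality for $g_1$ (by the value identity $g_1(x,\lambda(x))=g(x)$), and substituting the two identities above turns that inequality into $\dist(0,\partial g(x))\ge c\,(g(x)-g(\bar x))^{\alpha}$. This is precisely the KL inequality for $g$ at $\bar x$ with exponent $\alpha$, with desingularizing function $s\mapsto \frac{1}{c(1-\alpha)}s^{1-\alpha}$.

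I expect the main obstacle to be the bookkeeping that matches the two neighborhoods and value windows: one must verify that, for feasible $x$, the value condition on $g$ translates \emph{exactly} into the value condition on $g_1$ (this is immediate once $g_1(x,\lambda(x))=g(x)$ is established), and that continuity of $\lambda(\cdot)$ lets the $\delta$-ball around $(\bar x,\lambda(\bar x))$ in the product space be reached from a ball around $\bar x$ in $\mathbb{X}$. The conceptual crux, however, is the normal-cone computation $N_{G^{-1}\{0\}}(x)={\rm Range\,}(\nabla G(x))$ valid under injectivity of $\nabla G(x)$, since it is exactly this that forces $\dist(0,\partial g(x))$ to equal the least-squares value from (i) and hence $\|\nabla g_1(x,\lambda(x))\|$.
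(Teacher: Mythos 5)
Your proposal is correct and follows essentially the same route as the paper: establish injectivity of $\nabla G(x)$ near $\bar x$ to get the unique continuous multiplier $\lambda(x)$, compare $\dist(0,\partial g(x))$ with $\|\nabla g_1(x,\lambda(x))\|$ via the sum rule and the normal-cone representation of $G^{-1}\{0\}$, and transfer the KL inequality using the value identity $g_1(x,\lambda(x))=g(x)$ on the feasible set. The only cosmetic difference is that you assert the equality $N_{G^{-1}\{0\}}(x)={\rm Range\,}(\nabla G(x))$ (valid under the constraint qualification), whereas the paper only invokes the inclusion $N_{G^{-1}\{0\}}(x)\subseteq{\rm Range\,}(\nabla G(x))$, which is all that is needed for the lower bound on $\dist(0,\partial g(x))$.
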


% Then $\Argmin_\lambda\{\|\nabla F(x) + \nabla G(x)\lambda\|\}$ is unique on ${\rm dom\,}\partial f$. Denote $\lambda(x): = \argmin_\lambda\{\|\nabla F(x) + \nabla G(x)\lambda\|\}$. Then $\lambda(x)$ is continuous around $\bar x$.
\begin{proof}
We first prove (i). Since $\nabla G(\bar x)$ is an injective linear map and $x\mapsto \nabla G(x)$ is continuous,
there exists an $\epsilon > 0$ so that $\nabla G(x)$ is an injective linear map whenever $x\in B(\bar x,\epsilon)$.
Then statement (i) follows immediately because the function $\lambda\mapsto \|\nabla h(x) + \nabla G(x)\lambda\|$ is minimized if and only if the quantity $\|\nabla h(x) + \nabla G(x)\lambda\|^2$ is minimized, and this latter function is a strongly convex function in $\lambda$ whenever $x\in B(\bar x,\epsilon)$, thanks to the fact that $\nabla G(x)$ is an injective linear map \reviser{from $\mathbb{Y}$ to $\mathbb{X}$}.

We now prove (ii). Let $x\in B(\bar x,\epsilon)$ and $\lambda(x)$ denote the unique minimizer of $\lambda\mapsto \|\nabla h(x) + \nabla G(x)\lambda\|$. Then $\lambda(x)$ is also the unique minimizer of $\lambda\mapsto \|\nabla h(x) + \nabla G(x)\lambda\|^2$. Using the first-order optimality condition, we see that $\lambda(x)$ has to satisfy the relation $\nabla G(x)^*\left(\nabla h(x) + \nabla G(x)\lambda(x)\right)= 0$, which gives
\[
\lambda(x) = -(\nabla G(x)^*\nabla G(x))^{-1}(\nabla G(x)^*\nabla h(x));
\]
here the inverse exists because $\nabla G(x)$ is injective. Since $h$ and $G$ are continuously differentiable, we conclude that $\lambda$ is a continuous function on $B(\bar x,\epsilon)$.

Since $g_1$ satisfies the KL property at $(\bar{x},\lambda(\bar x))$ with exponent $\alpha$, there exist $a,\nu,c>0$ such that whenever $(x,\lambda)\in B\left((\bar x,\lambda(\bar x)),\nu \right)$ and $g_1(\bar{x},\lambda(\bar x))<g_1(x,\lambda)< g_1(\bar{x},\lambda(\bar x))+a$, it holds that
 \begin{align}\label{KL22}
   \|\nabla g_1(x,\lambda)\|\ge c\left(g_1(x,\lambda)- g_1(\bar x,\lambda(\bar x) )\right)^{\alpha}.
 \end{align}
Next, using \cite[Exercise~8.8]{RocWets98}, for any  $x\in B(\bar x,\epsilon)\cap \dom\partial g$, we have
\begin{equation*}
    \partial g(x) = \nabla h(x) + N_{G^{-1}\{0\}}(x)\subseteq  \nabla h(x) + \left\{\nabla G(x)\lambda:\;\lambda\in\mathbb{Y}\right\},
\end{equation*}
where the inclusion follows from \cite[Corollary~10.50]{RocWets98} and the injectivity of $\nabla G(x)$.
This implies that for any $x\in B(\bar x,\epsilon)\cap \dom\partial g$,
\begin{align}\label{ooo}
\dist(0,\partial g(x))\ge \inf_{\lambda}\|\nabla h(x) + \nabla G(x)\lambda\|= \|\nabla h(x) + \nabla G(x)\lambda(x)\|,
\end{align}
where the equality follows from the definition of $\lambda(x)$ as the unique minimizer.

On the other hand, we have for any $x\in \revise{{\rm dom\,}\partial g}$ and any $\lambda$ that
  \begin{align}\label{TF}
    \nabla g_1(x,\lambda) = \begin{bmatrix}
      \nabla h(x) + \nabla G(x)\lambda\\
      G(x)
    \end{bmatrix} =
    \begin{bmatrix}
      \nabla h(x) + \nabla G(x)\lambda\\
      0
    \end{bmatrix},
  \end{align}
  where the second equality holds because $G(x) = 0$ whenever $x\in \dom\partial g$. Combining \eqref{TF} with \eqref{ooo}, we then obtain for any  $x\in B(\bar x,\epsilon)\cap \dom\partial g$ that
  \begin{equation}\label{22}
    \begin{split}
      \dist(0,\partial g(x))\ge \|\nabla g_1(x,\lambda(x))\|.
    \end{split}
  \end{equation}

Now, choose $0<\epsilon'<\min\{\epsilon,\frac{\nu}{\sqrt{2}}\}$ small enough so that when $x\in B(\bar x,\epsilon')\cap\dom\partial g$, we have $\|\lambda(x)-\lambda(\bar x)\|\le \frac{\nu}{\sqrt{2}}$; such an $\epsilon'$ exists thanks to the continuity of $\lambda(\cdot)$. This implies that $(x,\lambda(x))\in B\left((\bar x,\lambda(\bar x)),\nu\right)$ whenever $x\in B(\bar x,\epsilon')\cap\dom\partial g$. Therefore, for $x\in B(\bar x,\epsilon')\cap\dom\partial g$ with $g(\bar x)<g(x)<g(\bar x) + a$, we have $(x,\lambda(x))\in B\left((\bar x,\lambda(\bar x)),\nu\right)$ and
\[
g_1(\bar{x},\lambda(\bar x))=g(\bar x)<g(x)=g_1(x,\lambda(x))< g(\bar x) + a=g_1(\bar{x},\lambda(\bar x))+a.
\]
For these $x$, combining \eqref{KL22} with \eqref{22}, we have
\begin{equation*}
    \begin{split}
      &\dist(0,\partial g(x))\ge c \, \reviser{\big(g_1(x,\lambda(x))- g_1(\bar x,\lambda(\bar x) )\big)}^{\alpha} = c \, \left(g(x)- g(\bar x)\right)^{\alpha},
    \end{split}
  \end{equation*}
where the equality holds because $G(x) = 0$ whenever $x\in \dom \partial g$. This completes the proof.
\end{proof}

We now make use of Theorem~\ref{equalityconstrainLL} to deduce the KL exponent of $\tilde f+\delta_{\frak{\tilde B}}$ in \eqref{f4=tildef4} at points $(\bar X,\bar U)\in \revise{{\rm dom\,}\partial(\tilde f+\delta_{\frak{\tilde B}})}$ with $\bar U^T\bar U = I_{m-k}$. For notational simplicity, we write
\begin{align}\label{n_0}
\tau:=mn+m(m-k)+n(m-k)-1.%+\frac{(m-k)(m-k+1)}{2}.
\end{align}

\begin{lemma}\label{tildef4}
 The function $\tilde f+\delta_{\frak{\tilde B}}$ given in \eqref{f4=tildef4} satisfies the KL property with exponent $1-\frac{1}{4\cdot 9^{\tau}}$ at points $(\bar X,\bar U)\in \revise{{\rm dom\,}\partial(\tilde f+\delta_{\frak{\tilde B}})}$ with $\bar U^T\bar U = I_{m-k}$, where $\tau$ is given in \eqref{n_0}.
\end{lemma}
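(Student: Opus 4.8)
The plan is to reduce the claim, through two successive reductions, to an explicit {\L}ojasiewicz gradient inequality for a single polynomial, whose exponent is controlled by its degree and number of variables.

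First I would dispose of the inequality constraint $\mathfrak{\tilde B}$ by showing it is locally inactive at $(\bar X,\bar U)$. Since $\bar U^T\bar U = I_{m-k}$, all eigenvalues of $\bar U^T\bar U$ equal $1$, which lies strictly inside $(0.5,2)$; by continuity of $U\mapsto U^TU$, the eigenvalues of $U^TU$ remain in $(0.5,2)$ for $U$ near $\bar U$, so $(\bar X,\bar U)\in{\rm int\,}\mathfrak{\tilde B}$. Hence $\delta_{\mathfrak{\tilde B}}$ vanishes on a neighborhood of $(\bar X,\bar U)$, which gives $\partial(\tilde f+\delta_{\mathfrak{\tilde B}})(\bar X,\bar U)=\partial \tilde f(\bar X,\bar U)$, so that $(\bar X,\bar U)\in{\rm dom\,}\partial\tilde f$ and the two functions share the same KL exponent at $(\bar X,\bar U)$. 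It therefore suffices to compute the KL exponent of $\tilde f$ in \eqref{f4=tildef4}.

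Second, I would strip off the equality constraint $U^TX=0$ using the Lagrangian rule in Theorem~\ref{equalityconstrainLL}. Write $\tilde f = h + \delta_{G^{-1}\{0\}}$ with smooth part $h(X,U)=\frac12\|{\cal A}X-b\|^2+\frac12\|U^TU-I_{m-k}\|_F^2$ and $G(X,U)=U^TX\in\R^{(m-k)\times n}$. The theorem requires $\nabla G(\bar X,\bar U)$ to be injective, equivalently that $DG(\bar X,\bar U)[H,K]=\bar U^TH+K^T\bar X$ be surjective onto $\R^{(m-k)\times n}$; this holds because $\bar U^T\bar U=I_{m-k}$ forces $\bar U$ to have full column rank, so that $H\mapsto \bar U^TH$ is already onto (given a target $M$, take $K=0$ and $H=\bar U M$, whence $\bar U^T(\bar U M)=M$). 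Theorem~\ref{equalityconstrainLL}(ii) then transfers the KL exponent of the Lagrangian $g_1(X,U,\Lambda)=h(X,U)+\langle\Lambda,U^TX\rangle$ at $(\bar X,\bar U,\Lambda(\bar X,\bar U))$ to $\tilde f$ at $(\bar X,\bar U)$, where $\Lambda(\bar X,\bar U)$ is the unique minimizer provided by part (i).

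Third, I would invoke the explicit polynomial exponent estimate. The Lagrangian $g_1$ is a polynomial on $\R^{mn}\times\R^{m(m-k)}\times\R^{n(m-k)}$, i.e.\ in $N:=mn+m(m-k)+n(m-k)=\tau+1$ variables by \eqref{n_0}, of degree $d=\max\{2,3,4\}=4$, the quartic term $\frac12\|U^TU-I_{m-k}\|_F^2$ being dominant. Since $g_1$ minus a constant is again a degree-$4$ polynomial in $N=\tau+1$ variables, the explicit bound on the {\L}ojasiewicz gradient inequality exponent for polynomials (the estimate $1-\frac{1}{d(3d-3)^{N-1}}$ underlying \cite[Theorem~3.3]{LiMorPham15}) applies at every point and yields the exponent $1-\frac{1}{4\cdot 9^{\tau}}$, since $d(3d-3)^{N-1}=4\cdot 9^{\tau}$. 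Chaining the three reductions gives the claimed exponent for $\tilde f+\delta_{\mathfrak{\tilde B}}$. The main obstacle here is conceptual rather than computational: recognizing that the nonpolynomial feasibility constraint $U^TX=0$ can be eliminated through the Lagrangian device of Theorem~\ref{equalityconstrainLL}, turning the problem into a genuine polynomial {\L}ojasiewicz estimate, and then correctly matching the bookkeeping $d=4$ and $N-1=\tau$. The only genuine verification step is the surjectivity of $DG(\bar X,\bar U)$, which rests squarely on the orthonormality $\bar U^T\bar U=I_{m-k}$ assumed in the statement.
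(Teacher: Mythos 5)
Your proposal is correct and follows essentially the same route as the paper: localize away $\delta_{\mathfrak{\tilde B}}$ using $(\bar X,\bar U)\in{\rm int\,}\mathfrak{\tilde B}$, invoke Theorem~\ref{equalityconstrainLL} with $G(X,U)=U^TX$ to pass to the quartic Lagrangian polynomial in $\tau+1$ variables, and apply the D'Acunto--Kurdyka bound $1-\frac{1}{d(3d-3)^{N-1}}=1-\frac{1}{4\cdot 9^{\tau}}$. The only cosmetic difference is that you verify the hypothesis on $\nabla G(\bar X,\bar U)$ via surjectivity of $DG(\bar X,\bar U)$ (taking $K=0$, $H=\bar U M$) whereas the paper checks injectivity of the adjoint directly by showing $\bar UY=0$ and $\bar XY^T=0$ force $Y=0$; these are equivalent.
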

\begin{proof}
  Define the function $G:\R^{m\times n}\times \R^{m\times (m-k)}\to \R^{(m-k)\times n}$ by $G(X,U) := U^TX$, one can rewrite $\tilde f$ as
  \[
  \tilde f (X,U)= \frac12\|{\cal A}X-b\|^2  + \frac12\|U^TU - I_{m-k}\|_F^2  + \delta_{G^{-1}\{0\}}(X,U).
  \]
  Now, for $X\in \R^{m\times n}$, $U\in \R^{m\times (m-k)}$ and $\Lambda\in \R^{(m-k)\times n}$, define
  \[
  \tilde f_1 (X,U,\Lambda) := \frac12\|{\cal A}X-b\|^2  + \frac12\|U^TU - I_{m-k}\|_F^2 + {\rm tr}(\Lambda^T U^TX).
  \]
  Note that \reviser{$\tilde f_1$ is a polynomial of degree $4$ on $\mathbb{R}^{\tau}$ where $\tau$ is given in \eqref{n_0}}. We deduce from \cite[Theorem~4.2]{DAKu05} that $\tilde f_1$ is a KL function with exponent $1-\frac{1}{4\cdot 9^{\tau}}$.

  Next, since $(\bar X,\bar U)\in \revise{{\rm dom\,}\partial (\tilde f + \delta_{\frak{\tilde B}})}$ with $\bar U^T\bar U = I_{m-k}$, we see that $(\bar X,\bar U)$ lies in the interior of ${\frak{\tilde B}}$. Thus, we have $(\bar X,\bar U)\in \revise{{\rm dom\,}\partial \tilde f}$.
  We will now check the conditions in Theorem \ref{equalityconstrainLL} for the functions $\tilde f_1$ and $\tilde f$ (in place of $g_1$ and $g$, \reviser{respectively}) at $(\bar X,\bar U)$. Notice first that the functions $(X,U)\mapsto \frac12\|{\cal A}X-b\|^2  + \frac12\|U^TU - I_{m-k}\|_F^2$ and $G$ are continuously differentiable, and \revise{$G^{-1}\{0\}$} is clearly nonempty. We next claim that the linear map $\nabla G(\bar X,\bar U)$ is injective. To this end, let $Y\in \ker \nabla G(\bar X,\bar U)$. Then, using the definition of the derivative mapping of $G$, for any $(H,K) \in \R^{m\times n}\times \R^{m\times (m-k)}$, we have
  \begin{align*}
  &0=\langle (H,K),[\nabla G(\bar X,\bar U)](Y)\rangle =\langle [DG(\bar X,\bar U)](H,K),Y\rangle\\
  &=\langle \bar U^TH + K^T\bar X,Y\rangle=\langle H,\bar UY\rangle  + \langle \bar XY^T,K\rangle.
  \end{align*}
  Since $H$ and $K$ are arbitrary, we deduce that
  \[
  \bar UY = 0\ \ {\rm and}\ \ \bar XY^T=0.
  \]
  These together with  $\bar U^T\bar U=I_{m-k}$ imply that $Y=0$. Thus, we have $\revise{{\rm ker\,}(\nabla G(\bar X,\bar U))}=\{0\}$, i.e., $\nabla G(\bar X,\bar U)$ is an injective linear map. Now, using Theorem \ref{equalityconstrainLL}, we conclude that $\tilde f$ satisfies the KL property at $(\bar X,\bar U)$ with exponent $1-\frac{1}{4\cdot 9^{\tau}}$.

  Finally, since $(\bar X,\bar U)\in \revise{{\rm int\,}{\frak{\tilde B}}}$, one can verify directly from the definition that, at $(\bar X,\bar U)$, the KL exponent of $\tilde f + \delta_{\frak{\tilde B}}$ is the same as that of $\tilde f$. This completes the proof.
\end{proof}

Now we are ready to compute the KL exponent of $f$ in \eqref{leastRank}. Interestingly, the derived KL exponent can be determined explicitly in terms of the number of rows/columns of the matrix involved and the upper bound constant in the rank constraint.
\begin{theorem}\label{lstsqRankKL}
  The function $f$ given in \eqref{leastRank} is a KL function with exponent $1-\frac{1}{4\cdot 9^{\tau}}$, where $\tau$ is given in \eqref{n_0}.
\end{theorem}
\begin{proof}
 Notice that $f (X)=\inf_{U}(\tilde f + \delta_{\frak{\tilde B}})(X,U)$ and that for any $X\in \revise{{\rm dom\,}\partial f}$,
 \begin{equation}\label{Argminf}
 \Argmin_{U}(\tilde f + \delta_{\frak{\tilde B}})(X,U) = \{U:\; U^TX = 0 \ \mbox{and}\ U^TU = I_{m-k}\},
 \end{equation}
 where $\tilde f+\delta_{\frak{\tilde B}}$ is given in \eqref{f4=tildef4}.
 We will check the conditions in Theorem \ref{general} and apply the theorem to deducing the KL exponent of $f$.

 First, the function $\tilde f + \delta_{\frak{\tilde B}}$ is clearly proper and closed.
 Next, for any fixed $X$, the $U$ with $(X,U)\in\mathfrak{\tilde D}\cap {\frak{\tilde B}}$ satisfies $0.5I_{m-k}\preceq U^TU\preceq 2I_{m-k}$. This shows that $\tilde f + \delta_{\frak{\tilde B}}$ is bounded in $U$ locally uniformly in $X$.
 Furthermore, \revise{for any $X\in{\rm dom\,}\partial f$ and any $U\in \Argmin_{U}(\tilde f + \delta_{\frak{\tilde B}})(X,U)$, we have using \eqref{Argminf} and \cite[Exercise~8.8]{RocWets98} that}
 \[
 \partial (\tilde f + \delta_{\frak{\tilde B}}) (X,U)=({\cal A}^*({\cal A}X - b),   0)+N_{\revise{\mathfrak{\tilde D}\cap \mathfrak{\tilde B}}}(X,U)\neq\emptyset.
 \]
 These together with \eqref{Argminf} and Lemma \ref{tildef4} implies that the conditions required by Theorem \ref{general} are satisfied. Applying Theorem \ref{general}, we conclude that $f$ is a KL function of exponent $1-\frac{1}{4\cdot 9^{\tau}}$.
\end{proof}

\section{Concluding remarks}\label{sec6}

In this paper, we show that the KL exponent is preserved via inf-projection, under mild assumptions. The result is then used for \revise{studying} KL exponents of various convex and nonconvex models, including some SDP-representable functions, convex functions involving $C^2$-cone reducible structures, Bregman envelopes, and more specifically, the sum of the least squares loss function and the indicator function of matrices of rank at most $k$.

Although several important calculus rules have been developed in this manuscript and the previous work \cite{LiPong17}, the KL \color{blue}exponents \color{black}  of some commonly used nonconvex models are still unknown, such as the least squares loss function with $\ell_{1-2}$ regularization \cite{YLHX15}. Estimating the \color{blue}exponents \color{black} for these models is an interesting future research question. Another future research direction will be to look at how KL exponent behaves under other important operations such as taking the maximum of finitely many or the supremum of infinitely many functions, \revise{as discussed in Remark~\ref{newremark3.1}}. Finally, notice that many of our results in this paper for convex models require the strict complementarity condition $0\in \revise{{\rm ri\,}\partial f(x)}$. It will be interesting to identify suitable assumptions (other than polyhedral settings) under which the strict complementarity condition can be relaxed, \revise{as discussed in Remark~\ref{newremark4.4}}.

\end{document}